\newcommand{\myexists}{\ensuremath\exists\kern-.45em\exists}
\newcommand{\myforall}{\ensuremath\forall\kern-.66em\forall\,}
\newcommand{\myvee}{\ensuremath\vee\kern-.98em\vee\,}
\newcommand{\myneg}{\ensuremath\neg\kern-.8em\neg\,}
\newcommand{\mywedge}{\ensuremath\wedge\kern-.98em\wedge\,}
	\theoremstyle{definition}
		\newtheorem{remark}{Remark}
        \newtheorem*{thmmm}{{Theorem 3}}
        \newtheorem*{cortwo}{{Corollary 2}}
	\theoremstyle{plain}
	\newtheorem{dfn}{Definition}
			\newtheorem{thm}{Theorem}
		\newtheorem{lemma}{Lemma}
		\newtheorem{prop}{Proposition}
		\newtheorem{corollary}{Corollary}
        \theoremstyle{definition}
\newtheorem{exmp}{Example}
        \newcommand{\T}{\mathrm{Tr}}
        \newcommand{\lra}{\leftrightarrow}
        \newcommand*\subdot[1]{\oalign{$#1$\cr\hfil.\hfil}}%
        \newcommand{\mc}[1]{\mathcal{#1}}
        \newcommand{\lnat}{\mathcal{L}_\mathbb{N}}
        \newcommand{\lt}{\mc{L}_\T}
        \newcommand{\mrm}{\mathrm}
        \newcommand{\vphi}{\varphi}
        \newcommand{\beq}{\begin{equation}}
        \newcommand{\eeq}{\end{equation}}
        \newcommand{\ra}{\rightarrow}
        \newcommand{\hatt}{\mathbf{HAT}}
        \newcommand{\Ra}{\Rightarrow}
        \newcommand{\nat}{\mathbb{N}}
        \newcommand{\sth}{\;|\;}
        \newcommand{\uph}{ \upharpoonright}
        \newcommand{\jcsv}{\mc{J}_\mrm{csv}}
        \newcommand{\VdashM}{\Vdash^{\mc{M}}_\mrm{csv}}
        \newcommand{\corn}[1]{\ulcorner #1\urcorner}
        \newcommand{\lipc}{\mc{L}_\mrm{IPC}}
        \newcommand{\lbox}{\mc{L}_\Box}
        \newcommand{\Lra}{\Leftrightarrow}
\title{Supervaluations, truth, and intutionistic logic}
\author[1]{Pablo Dopico}
\affil[1]{\small Fachbereich Philosophie, Universität Konstanz, pablo.dopico@uni-konstanz.de}
\date{}
\begin{document}

\maketitle

\begin{abstract} 
The supervaluationist approach to fixed-point semantics is, arguably, the most celebrated and studied competitor to the Strong Kleene approach within Kripkean truth. In this paper, we show how to obtain supervaluationist fixed-point theories of truth for intuitionistic logic. In particular, we show how to do supervaluations over Kripke structures for intuitionistic logic, and we obtain the corresponding theories of truth, both semantic and axiomatic. Furthermore, we technically compare our approach with previous attempts to formalize Kripkean supervaluational theories over possible-world semantics. Finally, we show that, unlike in the case of classical logic, a fixed-point theory over intuitionistic logic can be given that is both supervaluational and highly compositional in nature. 
\end{abstract}

\section{Introduction}

In his \cite{kripke_1975}, Saul Kripke devised a celebrated theory of truth, based on a fixed-point construction, that delivered a partial but fully transparent truth predicate. In other words, this theory meets the intuition that a sentence $\vphi$ is true if and only if $\vphi$ is the case. Mathematically, the construction on which the theory is based runs as follows: one starts with a set of sentences granted to be true, and then applies an operator that collects the consequences of those sentences on some partial logic. By employing a monotone operator, the well-known theory of positive inductive definitions guarantees the existence of fixed-points, including a minimal one, i.e., a fixed-point contained in any other fixed-point. If the partial logic is adequately defined, such fixed-points will meet the desired transparency condition. 

Historically, the partial logic most often used has been Strong Kleene (SK) logic. Its resulting fixed-point theory has motivated the famous (classical) axiomatic theory of Kripke-Feferman, KF (\cite{feferman_1991, cantini_1989}), as well as the (arguably) more faithful non-classical theory known as Partial Kripke-Feferman, PKF \citep{halbach_horsten_2006}. But Kripke immediately noticed that the theory based on SK logic would fail to declare true many truths of classical logic. For example, `$\lambda\rightarrow\lambda$', for $\lambda$ the Liar sentence, fails to be in the fixed-points of the SK construction. 

It is as a remedy to this failure that supervaluational fixed-point theories appear. Thus, Kripke's idea was to use as the partial logic a form of supervaluational logic in the style of \cite{van-fraassen_1966}. The original motivation behind supervaluationism is straightforward: to produce a logic that allows for truth-value gaps---and, more specifically, those engendered by non-denoting terms---while preserving all the truths of classical logic. Similarly, in the context of Kripke's theory, supervaluational logics become a way to recover all those classical validities that are lost in SK truth. And while the SK approach remains the most popular one, supervaluational truth has by now been studied in detail (\cite{cantini_1990, kremer-urquhart_2008, meadows_2013}) and is attracting a good deal of interest in recent years (\cite{stern_2018, incurvati_schloder_2023, stern_2025}). Consequently, we nowadays have not only a full picture of \textit{semantic} supervaluational truth in the different conceptions proposed by Kripke, but also a fairly thorough understanding of \textit{axiomatic} supervaluational truth as given by \cite{cantini_1990}, and, more recently, \cite{dopico_hayashi_2024}. 

There is a caveat, though. All these contributions---with the notable exception of \cite{stern_2025}---concern truth achieved by supervaluating over classical models. That is, one supervaluates by looking at the set of sentences satisfied by all classical models meeting certain conditions. But supervaluation is, after all, a method, and nothing prevents us from applying such method to an alternative class of semantic structures. Therefore, the question which motivates this paper is: what does supervaluation over intuitionistic structures look like? 

There are at least a couple of very relevant motivations for addressing this question. In the first place, and as mentioned, supervaluational truth emerged as a way to recover the classical validities that SK truth just takes as gappy. But classical truth is by no means universally accepted. A defender of constructive mathematics will be wary to accept all classical validities, and yet might be on board with accepting intuitionistic tautologies. Hence, supervaluational truth over intuitionistic logic might be more suitable as a formal theory of truth for the constructive mathematician.\footnote{Note that SK truth would still fail to declare true many intuitionistic validities, such as $\lambda\ra\lambda$, which the constructive mathematician might find true, and hence there is still a rationale for her to seek some form of supervaluationism.} This motivation is in line with the few advances in the field of intuitionistic truth that have appeared in the literature during the last years---mainly, \cite{leigh_rathjen_2012, leigh_2013,  zicchetti_fischer_2024}. 

In the second place, there is a technical interest: since the presentation of Kripke models for intuitionistic logic is rather different from the way classical models work, it is by no means immediate how to generalize supervaluations to the case of intuitionistic structures. In this sense, this paper follows somehow the spirit of previous work where intensional notions (such as truth, but also necessity) have been presented as a predicate in a setting based on possible-worlds---see, e.g., \cite{halbach_leitgeb_welch_2003, halbach_welch_2009, nicolai_2018a}.\footnote{For an axiomatic treatment, see \cite{stern_2014b, stern_2014}.} 

Finally, a further motivation stems from the limitations of supervaluationist truth over intuitionistic logic. In particular, its incompatibility with compositional properties for disjunctions and existentially quantified sentences. Given that intuitionistic arithmetic, unlike its classical counterpart, possesses the disjunction and existential properties---that is, a disjunction will be provable if and only if a disjunct is, and similarly for the existentially quantified statements---, it is reasonable to hope that supervaluationist theories over intuitionistic logic could retain compositionality for these connectives.

The paper is divided as follows. In the next section, we briefly introduce our notation for the remainder of the paper. In section \ref{semantics}, we offer the basic fixed-point construction for supervaluations over intuitionitic logic, working with the standard Kripke models for Heyting Arithmetic. In section \ref{axiomatics}, we present the axiomatic theory that corresponds to our semantic construction. Further, section \ref{fixed-frame approach} compares our approach with what we dubbed the \textit{fixed-frame} approach, the one found in preceding work on Kripkean truth over possible-world semantics. Finally, in section \ref{section:compositional supervaluations} we show that obtaining a theory that is both supervaluational and highly compositional is indeed possible: by modifying the forcing relation for the Kripke structures, we produce a supervaluational theory where the truth predicate distributes over disjunction and the existential quantifier.  

\section{Notational preliminaries}

Following the standard convention, our intuitionistic languages have as logical symbols  $\bot, \ra, \vee,$ $\wedge, \forall$ and $\exists$, as well as brackets. As concerns the arithmetical coding for our theories of truth, we follow mostly the conventions in \cite[ch.5]{halbach_2014}. Our base language $\lnat$ is a definitional extension of the language of Heyting Arithmetic ($\mrm{HA}$) with finitely many function symbols for primitive recursive functions. We assume a standard formalization of the syntax of first-order languages, including $\lnat$ itself---see e.g. \cite{hajek_pudlak_2017}. As is well-known, this can be carried out in HA. $\lnat$ is assumed to contain a finite set of function symbols that will stand for certain primitive recursive operations. For example, $\subdot \ra$ is a symbol for the primitive recursive function that, when inputted the code of two formulae $\vphi$ and $\psi$, yields the code of the conditional $\vphi\ra \psi$. The same applies to $\subdot \vee, \subdot \forall, \subdot \wedge, \subdot \exists$. 
We assume a function symbol for the substitution function, and write $x (t/v)$ for the result of substituting $v$ with $t$ in $x$; $\ulcorner \varphi(\dot x)\urcorner$ abbreviates $\ulcorner \varphi (v)\urcorner (\mrm{num}(x)/\corn{v}) $, for $x$ a term variable (and provided $\varphi$ only has one free variable). Furthermore, we write $t^\circ$ for the result of applying to a term $t$ the evaluation function (which outputs the value of the inputted term). Note that this is an abbreviation for a formula, and not a symbol of the language. In addition, for a closed term $t$ of the language, we write $t^\nat$ for the interpretation of this term in the standard (classical) model of arithmetic $\nat$.

  Given a language $\mc{L}$, we write  $\mathrm{Sent}_{\mc{L}}(x)$ for the formula representing the set of all sentences of $\mc{L}$. We use $\mrm{SENT}_\mc{L}$ to denote the set itself.  We will occasionally omit reference to $\mc{L}$ when this is clear from the context.  Here, we are basically interested in the cases in which $\mc{L}$ is $\lnat$ or $\lt:=\lnat\cup \{\T\}$. The expression $\Bar{n}$ stands for the numeral of the number $n$ (although we omit the bar for specific numbers). We write $\# \varphi$ for the code or Gödel number of $\varphi$, and $\ulcorner \varphi\urcorner$ for the numeral of that code. We sometimes write $\corn{\vphi(\subdot t)}$ as an abbreviation for $\corn{\vphi(x)}(t/\corn{x})$.

$\mrm{HAT}$ is the theory formulated in the language $\lt$ consisting of the axioms of $\mrm{HA}$ (i.e., the axioms of PA over intuitionistic predicate logic---see, e.g., \cite[ch.3]{troelstra_van-dalen_1988}) with induction extended to the whole language $\lt$. $\mrm{HAT}$ will be our background syntax theory.

Finally, if $T$, $S$ are theories, we write $\vert T\vert \geqq \vert S\vert$ to indicate that $T$ is at least as proof-theoretically strong as $S$, i.e., proves all the $\lnat$ theorems of $S$. Similarly, $\vert T\vert \equiv \vert S\vert$ is defined as $\vert T\vert \geqq \vert S\vert$ and $\vert S\vert \geqq \vert T\vert$.

\section{Semantics}\label{semantics}

In order to define intuitionistic supervaluational fixed-point semantics, we will make use of Kripke structures for intuitionistic logic. For the sake of simplification---and since different kinds of Kripke structures will give rise to different theories of truth---we will work with what Leigh and Rathjen \citeyearpar{leigh_rathjen_2012} call intuitionistic Kripke $\omega$-structures for the language $\lt$ . 

\subsection{The structures}

\begin{dfn} \label{persistent FO int structures}
A \emph{first-order intuitionistic Kripke} $\omega$\emph{-structure} $\mc{M}$ for $\lt$ is a quadruple 

\begin{equation*}
\langle W_\mc{M}, \leq_\mc{M}, \mc{D}_\mc{M}, \mc{I}_\mc{M}\rangle
\end{equation*}
such that:

\begin{itemize}
    \item $W_\mc{M}$ is a set of worlds.
    \item $\leq_\mc{M}$ is a partial order on $W_\mc{M}$.
    \item $\mc{D}_{\mc{M}}= W_\mc{M} \times \{\omega\} $
    \item $\mc{I}_\mc{M}\subseteq W_\mc{M} \times \mrm{SENT}_{\lt} $
\end{itemize}

\noindent Moreover, we say that a first-order intuitionistic Kripke $\omega$-structure  $\mc{M}=\langle W_\mc{M}, \leq_\mc{M}, \mc{D}_\mc{M}, \mc{I}_\mc{M}\rangle$ is \emph{persistent} iff $\mc{I}_\mc{M}$ satisfies the hereditary condition, i.e., for all $u,v\in W_\mc{M}$, $u\leq_\mc{M} v$ and $\langle u,x\rangle \in \mc{I}_\mc{M}$ implies $\langle v,x\rangle \in \mc{I}_\mc{M}$.
\end{dfn}

\noindent Remember that $\mrm{SENT}_{\lt}$ stands for the set of Gödel numbers of sentences of $\lt$ and is therefore a subset of $\omega$. Given a structure $\mc{M}:=\langle W_\mc{M}, \leq_\mc{M}, \mc{D}_\mc{M}, \mc{I}_\mc{M}\rangle$, we will occasionally call the triple $\langle W_\mc{M}, \leq_\mc{M}, \mc{D}_\mc{M}\rangle$ the \textit{frame} of $\mc{M}$. Also, we write $\mc{I}_u$ for $\{ x: \langle u,x\rangle \in \mc{I}_\mc{M}\}$. Since we call the members of $W_\mc{M}$ \textit{worlds}, $\mc{D}_\mc{M}$ is to be thought as the domain-assigning function which assigns $\omega$ to every world; and $\mc{I}_u$ is the \textit{interpretation of truth at world u}. With this terminology, the hereditary condition is best rewritten as: for all $u,v\in W_\mc{M}$, $u\leq_\mc{M} v$ implies $\mc{I}_u\subseteq \mc{I}_v$. We also note that, if unspecified, it is assumed that, when we speak of a structure $\mc{M}$, it corresponds to the tuple $\langle W_\mc{M}, \leq_\mc{M}, \mc{D}_\mc{M}, \mc{I}_\mc{M}\rangle$.

When we restrict a persistent first-order intuitionistic Kripke-$\omega$ structure $\mc{M}$ to the structure generated by one of its worlds $w$ and its accessibility relation, we speak of the \textit{truncated} structure $\mc{M}_w$:

\begin{dfn}\label{truncated structure}
Given a persistent first-order intuitionistic Kripke $\omega$-structure $\mc{M}$ and a world $w\in W_\mc{M}$, the \emph{truncated structure} $\mc{M}_w$ is the quadruple $\langle W_{\mc{M}_w}, \leq_{\mc{M}_w}, \mc{D}_{\mc{M}_w}, \mc{I}_{\mc{M}_w}\rangle$, where: 
\begin{itemize}
    \item $W_{\mc{M}_w}=\{ w'\sth w'\geq_\mc{M} w\}$
    \item $\leq_{\mc{M}_w}=\leq_\mc{M}\uph W_{\mc{M}_w}$
    \item $\mc{D}_{\mc{M}_w}= \mc{D}_\mc{M}\uph W_{\mc{M}_w}$
    \item $\mc{I}_{\mc{M}_w}=\mc{I}_{\mc{M}}\uph W_{\mc{M}_w}$
\end{itemize}
\end{dfn}

\noindent Now we define a forcing relation $w \Vdash A$, for $A$ a formula of $\lt$, as follows: 

\begin{dfn}\label{forcing relation}
Given a persistent first-order intuitionistic Kripke $\omega$-structure $\mc{M}$ and a world $w\in W_\mc{M}$, we define recursively the relation $w \Vdash A$ as follows: 

\begin{enumerate}
    \item $w\nVdash \bot$.
    \item $w\Vdash R(t_1,...,t_n)$ iff $R(t_1, ...,t_n) $ is satisfied in the standard model $\nat$, where $R$ is an n-ary predicate symbol of $\lnat$ for a primitive recursive relation and $t_1,...,t_n$ are terms of $\lnat$.
    \item $w\Vdash \T(s)$ iff there is a sentence $\psi$ of $\lt$ such that $s^\nat=\#\psi$ and $\#\psi\in \mc{I}_w$.
    \item $w\Vdash A\wedge B$ iff $w\Vdash A$ and $w\Vdash B$.
    \item $w\Vdash A\vee B$ iff $w\Vdash A$ or $w\Vdash B$.
    \item $w\Vdash A\rightarrow B$ iff for every $w'\geq w$, $w'\Vdash A$ implies $w'\Vdash B$.
    \item $w\Vdash \exists x A(x)$ iff there exists some $n\in \omega$ such that $w\Vdash A(\bar n)$.
    \item $w\Vdash \forall x A(x)$ iff for all $w'\geq w$ and all $n\in \omega$ we have $w'\Vdash A(\bar n)$.
\end{enumerate}
\end{dfn}
\noindent We write $w\Vdash_\mc{M} A$ when it is deemed appropriate to remark that the underlying structure is $\mc{M}$.

\begin{dfn}
 Given a persistent first-order intuitionistic Kripke $\omega$-structure $\mc{M}$ and a formula $A$ of $\lt$, we write $\mc{M}\vDash A$ iff $w\Vdash A$ for all $w\in W_\mc{M}$.
\end{dfn}

\noindent Because of the hereditary condition on interpretations of the truth predicate, the following holds: 

\begin{prop}
All persistent first-order intuitionistic Kripke $\omega$-structures are hereditary first-order intuitionistic Kripke structures. That is, for any such structure $\mc{M}$ and worlds $w, w'\in W_\mc{M}$, and any formula $\vphi$ of $\lt$: if $w\leq_\mc{M} w'$ and $w\Vdash \vphi$, then $w'\Vdash \vphi$.
\end{prop}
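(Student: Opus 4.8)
The plan is to argue by induction on the build-up of the formula $\vphi$, following exactly the clauses of Definition \ref{forcing relation}. The persistence (hereditary) condition on $\mc{I}_\mc{M}$ will be invoked at precisely one point---the atomic truth clause---while the remaining cases follow either because the forcing of arithmetical atoms is world-independent, or from transitivity of $\leq_\mc{M}$, or from a direct appeal to the induction hypothesis.

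For the base cases: the clause for $\bot$ is vacuous, since $w\nVdash\bot$ for every $w$, so the implication holds trivially. For an arithmetical atom $R(t_1,\dots,t_n)$, forcing reduces to satisfaction in the standard model $\nat$ and makes no reference to the world; hence $w\Vdash R(t_1,\dots,t_n)$ and $w'\Vdash R(t_1,\dots,t_n)$ are literally the same condition, and persistence is immediate. The crucial base case is $\T(s)$: here $w\Vdash\T(s)$ means that $s^\nat=\#\psi$ for some sentence $\psi$ with $\#\psi\in\mc{I}_w$; since $w\leq_\mc{M} w'$, the hereditary condition yields $\mc{I}_w\subseteq\mc{I}_{w'}$, so $\#\psi\in\mc{I}_{w'}$ and therefore $w'\Vdash\T(s)$.

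For the inductive step, the cases of $\wedge$, $\vee$, and $\exists$ are discharged by applying the induction hypothesis to the immediate subformulae (to both conjuncts, to the true disjunct, and to the witnessing instance $A(\bar n)$, respectively). The cases of $\ra$ and $\forall$ are slightly different in that they do not require the induction hypothesis at all: since $w\leq_\mc{M} w'$ and $\leq_\mc{M}$ is a partial order, every $w''\geq_\mc{M} w'$ also satisfies $w''\geq_\mc{M} w$ by transitivity. Thus the universal condition defining $w\Vdash A\ra B$ (respectively $w\Vdash\forall x\,A(x)$) over all successors of $w$ specializes to the corresponding condition over all successors of $w'$, yielding $w'\Vdash A\ra B$ (respectively $w'\Vdash\forall x\,A(x)$).

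There is, in truth, no serious obstacle: the statement is the standard monotonicity lemma for intuitionistic Kripke models, adapted to a setting where truth is a predicate interpreted by the sets $\mc{I}_w$. The only place where the specific hypothesis of the proposition---persistence of $\mc{I}_\mc{M}$---does any work is the $\T(s)$ clause; every other clause is persistent ``for free'', either because it is world-insensitive or because the $\ra$ and $\forall$ clauses already quantify universally over upward-closed sets of worlds. The only care required is to set up the induction over formula complexity so that it properly covers the quantifier clauses, where substitution of numerals $\bar n$ produces the relevant subformulae.
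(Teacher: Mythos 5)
Your proof is correct and is exactly the standard induction the paper leaves implicit (the paper states the proposition without proof, attributing it to the hereditary condition on $\mc{I}_\mc{M}$): the $\T(s)$ clause is indeed the only place persistence of the interpretation is used, the arithmetical atoms are world-independent, and the $\ra$ and $\forall$ clauses are handled by transitivity of $\leq_\mc{M}$ rather than the induction hypothesis. No gaps.
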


\noindent It is known that intuitionistic first-order predicate calculus is both sound and complete with respect to first-order hereditary intuitionistic Kripke structures, and hence sound with respect to persistent first-order intuitionistic Kripke $\omega$-structures.\footnote{See \cite[Ch.5]{troelstra_van-dalen_1988}.} Since $\omega$-structures satisfy the Peano axioms, it is also evident that:

\begin{prop}
If $\mc{M}$ is a persistent first-order intuitionistic Kripke $\omega$-structure, $\mc{M}\vDash \hatt$. Therefore, if $\hatt\vdash \vphi$, $\mc{M}\vDash \vphi$.
\end{prop}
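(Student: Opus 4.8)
The plan is to show that every axiom of $\hatt$ is forced at every world of $\mc{M}$, so that $\mc{M}\vDash\hatt$, and then to read off the second assertion from the soundness of intuitionistic predicate calculus already cited above. The first thing I would establish is a world-independence lemma for the purely arithmetical fragment: for every $\lnat$-formula $A(x_1,\dots,x_k)$, all numerals $\bar n_1,\dots,\bar n_k$, and every world $w$, one has $w\Vdash A(\bar n_1,\dots,\bar n_k)$ iff $\nat\vDash A(\bar n_1,\dots,\bar n_k)$. This is proved by induction on the build-up of $A$. The atomic case is exactly clause (2) of Definition \ref{forcing relation}; conjunction, disjunction, and $\bot$ are immediate; and since every world has domain $\omega$, the existential and universal clauses match satisfaction in $\nat$. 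The only delicate case is implication, but there the induction hypothesis tells us that forcing of each $\lnat$-subformula is the \emph{same} at every world, so the quantification over $w'\geq w$ in clause (6) collapses and $w\Vdash A\ra B$ reduces to the classical condition $\nat\vDash A\ra B$.

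Granting this lemma, the non-inductive axioms of $\mrm{HA}$ come for free: the defining equations for $\zero$, $S$, $+$, $\times$, together with the recursion equations for the remaining primitive recursive function symbols of $\lnat$, are all $\lnat$-sentences true in $\nat$, hence forced at every world of $\mc{M}$.

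The substantive step, and the one I expect to be the main obstacle, is the induction schema extended to all of $\lt$, precisely because the lemma fails once $\T$ occurs and forcing of $\T$-sentences is genuinely world-dependent. Fix a formula $A(x)$ of $\lt$ (parameters are carried along identically) and a world $w$; I must show $w\Vdash\bigl(A(\zero)\wedge\forall x(A(x)\ra A(Sx))\bigr)\ra\forall x\,A(x)$. Unwinding the implication clause, I fix an arbitrary $w'\geq w$ forcing the antecedent and an arbitrary $w''\geq w'$, and by the forcing clause for $\forall$ it suffices to prove $w''\Vdash A(\bar n)$ for every $n\in\omega$ by an \emph{external} induction on $n$. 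The base case $w''\Vdash A(\zero)$ follows from $w'\Vdash A(\zero)$ by the persistence Proposition proved just above. For the step, suppose $w''\Vdash A(\bar n)$; since $w'\Vdash\forall x(A(x)\ra A(Sx))$ and $w''\geq w'$, instantiating the $\forall$-clause at $w''$ and $n$ gives $w''\Vdash A(\bar n)\ra A(S\bar n)$, and applying the implication clause at the world $w''$ itself together with $w''\Vdash A(\bar n)$ yields $w''\Vdash A(S\bar n)$, i.e. $w''\Vdash A(\overline{n+1})$ since $S\bar n$ evaluates to $n+1$ in $\nat$. Thus every world above $w'$ forces $A(\bar n)$ for all $n$, so $w'\Vdash\forall x\,A(x)$, which establishes the instance at $w$; as $w$ was arbitrary, $\mc{M}$ validates the schema and $\mc{M}\vDash\hatt$.

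Finally, the second claim is immediate. A derivation witnessing $\hatt\vdash\vphi$ uses only finitely many axioms of $\hatt$, each of which is valid in $\mc{M}$ by the first part; since intuitionistic predicate calculus is sound with respect to persistent first-order intuitionistic Kripke $\omega$-structures, validity is preserved along the derivation and $\mc{M}\vDash\vphi$ follows. In short, every ingredient except the full induction schema is either the cited soundness theorem or the world-independence lemma, whereas the induction schema for $\T$-formulas is where the real work lies, requiring the combination of the persistence property with a metatheoretic induction on $\omega$.
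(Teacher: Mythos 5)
Your proposal is correct, and it follows exactly the route the paper has in mind: the paper states this Proposition without proof, remarking only that it is ``evident'' because $\omega$-structures satisfy the Peano axioms and intuitionistic predicate calculus is sound for these structures. Your world-independence lemma for $\lnat$-formulas, the external induction on $n$ (using hereditariness and reflexivity of $\leq_\mc{M}$) for the $\lt$-induction schema, and the appeal to the cited soundness theorem for the second claim together supply precisely the details the paper omits.
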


\noindent We also remark a feature of truncated structures that is straightforward to establish. The proof just relies on the definition of the forcing clauses:

\begin{prop}
Given a persistent first-order intuitionistic Kripke $\omega$-structure $\mc{M}$, a world $w\in W_\mc{M}$, and a formula $\vphi$ of $\lt$, for any $w'\geq_\mc{M}w$:
\begin{equation*}
    w'\Vdash_{\mc{M}_w}\vphi \Lra w'\Vdash_{\mc{M}}\vphi
\end{equation*}
\end{prop}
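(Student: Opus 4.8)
The plan is to argue by induction on the complexity of $\vphi$, and the single structural fact that does all the work is that truncation at $w$ leaves the upward cone of every world $w'\geq_\mc{M} w$ untouched. Concretely: if $w'\in W_{\mc{M}_w}$ and $w''\geq_\mc{M} w'$, then by transitivity of the partial order $w''\geq_\mc{M} w$, so $w''\in W_{\mc{M}_w}$; conversely every $\leq_{\mc{M}_w}$-successor of $w'$ is trivially a $\leq_\mc{M}$-successor. Hence the set $\{w'' : w''\geq w'\}$ is the very same set whether computed in $\mc{M}$ or in $\mc{M}_w$, and on this set the order and the interpretation of $\T$ agree, because $\leq_{\mc{M}_w}$ and $\mc{I}_{\mc{M}_w}$ are by Definition \ref{truncated structure} just the restrictions of $\leq_\mc{M}$ and $\mc{I}_\mc{M}$ to $W_{\mc{M}_w}$. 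I would isolate this cone-preservation observation at the outset, since it is precisely what the successor-quantifying clauses appeal to.

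First I would dispatch the base cases. For $\bot$ both sides fail by clause 1; for an atomic $\lnat$-formula $R(t_1,\dots,t_n)$ the forcing condition refers only to satisfaction in $\nat$ and is therefore independent of the structure and the world; for $\T(s)$ the condition is that $s^\nat=\#\psi$ with $\#\psi\in\mc{I}_{w'}$, and since $w'\in W_{\mc{M}_w}$ the slice $\mc{I}_{w'}$ is literally the same object in $\mc{M}$ and in $\mc{M}_w$, while $s^\nat$ is computed in $\nat$ regardless. The conjunction, disjunction, and existential cases are then immediate from the induction hypothesis applied to the immediate subformulae at the very same world $w'$; for the existential, one notes that the domain assigned to $w'$ is $\omega$ in both structures, so the range of witnesses $\bar n$ is unchanged.

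The only clauses genuinely needing the cone observation are implication (clause 6) and the universal quantifier (clause 8), since these are the clauses that quantify over successor worlds. For $A\ra B$, unfolding the definition in each structure produces a quantification over all $w''\geq w'$; by the cone observation this quantifier ranges over the identical set of worlds, and at each such $w''$---which again lies in $W_{\mc{M}_w}$ and satisfies $w''\geq_\mc{M} w$---the induction hypothesis yields $w''\Vdash_{\mc{M}_w} A \Lra w''\Vdash_\mc{M} A$ and likewise for $B$. The two unfolded conditions therefore coincide, giving the equivalence at $w'$. The $\forall x A(x)$ case is handled identically, combining the cone observation with the induction hypothesis applied to $A(\bar n)$ at each $w''\geq w'$ and each $n\in\omega$.

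I expect no real obstacle: this is a routine induction whose entire content is the cone-preservation fact, and the proposition itself already flags that \emph{the proof just relies on the definition of the forcing clauses}. The one bookkeeping point worth stressing is that the induction hypothesis must be formulated uniformly for all worlds $w'\geq_\mc{M} w$, not merely for the fixed $w'$ named in the statement, precisely because clauses 6 and 8 reduce forcing at $w'$ to forcing at other worlds $w''$ in the same cone; with that uniform formulation, the inductive step for the two successor-quantifying clauses closes cleanly.
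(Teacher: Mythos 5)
Your proof is correct and is exactly the routine induction the paper has in mind when it says the result ``just relies on the definition of the forcing clauses'' (the paper omits the details entirely). The cone-preservation observation via transitivity of $\leq_\mc{M}$, together with the uniformly quantified induction hypothesis for the $\ra$ and $\forall$ clauses, is precisely the intended argument.
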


\noindent This result is assumed in several places throughout this paper. 

In the remaining of this section, $\mc{M}, \mc{N}, \mc{M}'$, etc. will range over persistent first-order intuitionistic Kripke $\omega$-structures.

\subsection{The semantic construction}

The simplest approach for a semantic, supervaluational theory in classical logic based on the fixed-point approach is the following. Given an interpretation for the truth predicate, $X$, one considers all models of $\lt$ such 
that their arithmetical part is just $\nat$ and whose interpretation of the truth predicate includes $X$. In other words: one considers all classical structures where the interpretation for the truth predicate \textit{extends} the one given.  (Sometimes, additional conditions are imposed; we will return to that later on.) Thus, writing $\langle \nat, S\rangle$ for the classical model for $\lt$ over the standard model $\nat$ where we let $S$ be the interpretation of $\T$, the classical approach is formalized as:

\begin{equation}\label{basic classical supervaluation}
\langle \nat, X\rangle\vDash_{\mrm{sv}} A \text{ iff for all } X'\supseteq X, \langle \nat,  X'\rangle \vDash A
\end{equation}

\noindent Here, $\mc{M} \vDash_{\mrm{sv}} A$ is to be read as ``$A$ is sv-satisfied by $\mc{M}$'', and each model of the form $\langle \nat,  X'\rangle $ is a way of extending $\langle \nat, X\rangle$.

However, since intuitionistic structures differ from classical ones, we need to translate the classical supervaluationist framework in the the most faithful way possible. In particular, there are two differences between the classical and the intuitionistic case, concerning what a extension of the interpretation for the truth predicate is, that we see as crucial. These rely on an understanding of the interpretation for the truth predicate which equates the latter with the relation $\mc{I}_\mc{M}$, where $\mc{M}$ is some first-order intuitionistic Kripke $\omega$-structure. First, the relation, and with it the interpretation for $\T$, can be extended by adding to the structure \textit{more worlds} over which the function is defined. That is, by enlarging a given structure with more worlds (and, possibly, more elements to the accessibility relation), and defining the interpretation of truth at those worlds---in such a way that respects the requirements for intuitionistic Kripke $\omega$-structures---we are producing an extended truth interpretation. Formally, we start with a structure $\mc{M}$ whose truth interpretation function is $\mc{I}_\mc{M}$. Then, in considering another another structure $\mc{N}$ which possesses all the worlds in $\mc{M}$ and additional ones, and whose interpretation of truth for all worlds already in $\mc{M}$ is that of $\mc{M}$, $\mc{I}_\mc{N}$ genuinely extends $\mc{I}_\mc{M}$, i.e., $\mc{I}_\mc{M}\subseteq \mc{I}_\mc{N}$. Second, the relation, and with it the interpretation for $\T$, might be extended by assigning \textit{larger sets of sentences} to some of the worlds, and keeping the rest equal. That is, a truth interpretation $\mc{I}_\mc{N}$ might extend another $\mc{I}_\mc{M}$ by having the same worlds, but assigning a larger interpretation of truth for some of those worlds---provided it respects the hereditary conditions. In that case, clearly, we also have $\mc{I}_\mc{M}\subseteq \mc{I}_\mc{N}$. In addition, and combining these two points, an interpretation $\mc{I}_\mc{N}$ will also extend another when $\mc{N}$ contains all the worlds in $\mc{M}$ plus additional ones, \textit{and} the interpretation of truth at every shared world is equal or greater in $\mc{N}$ than in $\mc{M}$.

In sum, a structure $\mc{M}$ extending the truth interpretation of another structure $\mc{N}$ means that: i) $\mc{M}$ includes at least all the worlds contained in $\mc{N}$, and; ii) in every world contained in both $\mc{M}$ and $\mc{N}$, the interpretation of $\T$ at $\mc{M}$ extends the interpretation of $\T$ at $\mc{N}$. This being said, there are different ways of conceiving an extended truth interpretation---we will touch on a previously explored such conception in section \ref{fixed-frame approach}. For now, we stick to the one just propounded. A simple formal rendering of it could be the following:

\begin{dfn}[Naive interpretation extension]
Given structures $\mc{M}$ and $\mc{M}'$ \emph{naively extends} $\mc{I}_{\mc{M}}$, in symbols $\mc{I}_{\mc{M}}\sqsubseteq \mc{I}_{\mc{M}'}$ iff 

\begin{itemize}
    \item $\mc{I}_{\mc{M}}\subseteq \mc{I}_{\mc{M}'}$
    \item $\leq_{\mc{M}'}\uph W_\mc{M}=\leq_\mc{M}$
\end{itemize}
\end{dfn}

\noindent Note that $\mc{I}_{\mc{M}}\sqsubseteq \mc{I}_{\mc{M}'}$ implies $W_\mc{M}\subseteq W_{\mc{M}'}$. And, of course, $\mc{I}_\mc{M}\sqsubseteq \mc{I}_\mc{M}$ for all $\mc{M}$; any interpretation extends itself. So we could really think of one interpretation  for a structure extending another in this naive sense when the latter is a substructure of the former.

Nonetheless, this is not good enough. For what if we have two structures where worlds have different labels but, modulo label adjustment, one interpretation extends the other? Consider the following simple example: we have a one-world structure $\mc{M}$, with $W_\mc{M}=\{w\}$ and $\mc{I}_\mc{M}(w)=X$; and another one-world structure $\mc{N}$, with $W_\mc{N}=\{v\}$ and $\mc{I}_\mc{N}(v)=X'\supseteq X$. If, in order to define supervaluational truth, one wants to take into account \textit{all} structures whose interpretation is in some sense an extension of  $\mc{I}_\mc{M}$, it seems that one would like to consider $\mc{N}$. After all, $\mc{N}$ is structurally the same as $\mc{M}$, and the interpretation of truth at the only world in $\mc{N}$ extends that given to the only world in $\mc{M}$. Thus, we introduce the notion of \textit{interpretation embeddings}: 

\begin{dfn}[Embedding-interpretation function]\label{embedding function}
Given structures $\mc{M}$ and $\mc{M}'$, and a function $f:W_\mc{M}\longrightarrow W_{\mc{M}'}$, $f$ is an embedding-interpretation function (or EI function) from $\mc{M}$ into $\mc{M}'$ iff all of the following hold:

\begin{enumerate}
    \item $f$ is injective
    \item for all $w_0, w_1\in W_\mc{M}$: $w_0\leq_\mc{M}w_1\Leftrightarrow f(w_0)\leq_{\mc{M}'} f(w_1)$
    \item for all $w\in W_\mc{M}$ and sentences $\vphi$ of $\lt$: $\langle w, \#\vphi\rangle \in \mc{I}_\mc{M} \Rightarrow \langle f(w), \#\vphi\rangle \in \mc{I}_{\mc{M}'}$
\end{enumerate}
\end{dfn}

\begin{dfn}[Interpretation embedding]\label{interpretation embedding}
Given structures $\mc{M}:=\langle W_\mc{M}, \leq_\mc{M}, \mc{D}_\mc{M}, \mc{I}_\mc{M}\rangle$ and $\mc{M}':=\langle W_\mc{M'}, \leq_\mc{M'}, \mc{D}_\mc{M'}, \mc{I}_\mc{M'}\rangle$, we say that $\mc{I}_{\mc{M}}$ \emph{is embeddable in} $\mc{I}_{\mc{M}'}$, in symbols $\mc{I}_{\mc{M}}\leqq \mc{I}_{\mc{M}'}$, iff there is a function $f:W_\mc{M}\longrightarrow W_{\mc{M}'}$ which is an embedding-interpretation function from $\mc{M}$ into $\mc{M}'$. 
\end{dfn}

\noindent When we want to signify that $\mc{I}_{\mc{M}}$ is embeddable in $\mc{I}_{\mc{M}'}$ via the function $f$, we write $\mc{I}_{\mc{M}}\leqq_f \mc{I}_{\mc{M}'}$. The existence or not of an EI function between two structures is then really what determines whether the interpretation for the truth predicate of one of them extends the interpretation of the other. We might sometimes be a little sloppy and simply say that $\mc{M}$ is embeddable in $\mc{M'}$. This being settled, we can define the supervaluational forcing relation, akin to the supervaluational satisfaction relation of the classical case:

\begin{dfn}\label{svi-forcing}
Let $\mc{M}$ be a structure, and let $w\in W_\mc{M}$. We define the relation $w\Vdash^{\mc{M}}_\mrm{svi}\vphi$ as follows:

\begin{center}
    $w\Vdash^{\mc{M}}_\mrm{svi}\vphi: \Leftrightarrow \forall \mc{M}'\forall f(\mc{I}_\mc{M}\leqq_f \mc{I}_{\mc{M}'} \Rightarrow f(w)\Vdash_{\mc{M}'} \vphi)$.
\end{center}
\end{dfn}

\noindent $w\Vdash^{\mc{M}}_\mrm{svi}\vphi $ is to be read as ``$w$ svi-forces $\vphi$''. $w\in W_\mc{M}$ svi-forces $\vphi$ when, for any structure into which $\mc{M}$ is embeddable via some function $f$, the image of $w$ under the embedding function forces $\vphi$. Thus, the notion of svi-forcing is tied to a world $w$. The antecedent in the definiendum (in this case, $\mc{I}_\mc{M}\leqq_f \mc{I}_{\mc{M}'}$) is called the admissibility requirement. We refer to the forcing scheme defined by the svi-forcing relation as the scheme SVI.

Let us now give the informal picture. Forcing in Kripke models is a relation between a world and a formula, but determined by a structure, an accessibility relation, and a truth interpretation. Hence, in supervaluating, we range over alternative structures, accessibility relations, and truth interpretations, thus considering what could have been somewhat extended while respecting what is, so to say, `determinately true'---i.e., the original interpretation of the truth predicate at a given world.

Next, we define a Kripke-jump; the latter is no longer relative to a world:

\begin{dfn} \label{jump operator}
Let $X\subseteq \mrm{SENT}_{\lt}$. The Kripke-jump operator over the svi-forcing relation is a function $\mc{J}_\mrm{svi}:\mc{P}(\mrm{SENT}_{\lt})\longrightarrow \mc{P}(\mrm{SENT}_{\lt})$ defined as follows::

\begin{center}
$\mc{J}_\mrm{svi}(X):=\{\# \vphi \sth \forall \mc{M}, \forall w\in W_\mc{M}(\mc{I}_{\mc{M}} (w)=X \Rightarrow w\Vdash^{\mc{M}}_\mrm{svi}\vphi)\}$  
\end{center}
\end{dfn}

\noindent By analogy with the classical case, we can abbreviate the set condition on the right-hand-side, i.e. $\forall \mc{M}, \forall w\in W_\mc{M}(\mc{I}_{\mc{M}} (w)=X \Rightarrow w\Vdash^{\mc{M}}_\mrm{svi}\vphi)$, by $X\vDash_\mrm{svi}\vphi$. Roughly put, then, $X\vDash_\mrm{svi}\vphi$ amounts to what is supervaluationally forced at all worlds with truth interpretation equal to $X$, abstracting away from the particularities of a world in a structure. 

As is to be expected: 

\begin{prop}\label{svi-intuitionistic-validities} Let $X\subseteq \mrm{SENT}_{\lt}$. Then $X\vDash_\mrm{svi} \vphi$, for $\vphi$ any intuitionistically valid formula of $\lt$.
\end{prop}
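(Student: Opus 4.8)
The plan is to peel off the two nested universal quantifiers hidden in the definitions and reduce the whole statement to the soundness of intuitionistic predicate calculus with respect to persistent first-order intuitionistic Kripke $\omega$-structures, which was recorded right after Definition \ref{forcing relation}.

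First I would fix an arbitrary structure $\mc{M}$ and an arbitrary world $w\in W_\mc{M}$ with $\mc{I}_\mc{M}(w)=X$; by the abbreviation introduced after Definition \ref{jump operator}, establishing $X\vDash_\mrm{svi}\vphi$ amounts to showing $w\Vdash^{\mc{M}}_\mrm{svi}\vphi$ for every such $\mc{M}$ and $w$. Note that the specific value of $X$ plays no role whatsoever in what follows. Next I would unfold the svi-forcing relation (Definition \ref{svi-forcing}): it suffices to take an arbitrary structure $\mc{M}'$ and an arbitrary EI function $f$ with $\mc{I}_\mc{M}\leqq_f\mc{I}_{\mc{M}'}$ and to show $f(w)\Vdash_{\mc{M}'}\vphi$. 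Here the only feature of $f$ I would actually use is that $f(w)$ is a genuine world of $\mc{M}'$; the embedding conditions of Definition \ref{embedding function} are irrelevant for this particular argument.

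The crux is then immediate. By hypothesis $\mc{M}'$ is a persistent first-order intuitionistic Kripke $\omega$-structure, so the relation $\Vdash_{\mc{M}'}$ is a bona fide intuitionistic Kripke forcing relation: clauses 4--8 of Definition \ref{forcing relation} are exactly the standard ones, while the atomic clauses 1--3 merely fix a hereditary atomic valuation (the arithmetical facts together with $\mc{I}_{\mc{M}'}$). Since $\vphi$ is intuitionistically valid, soundness of intuitionistic predicate calculus gives $\mc{M}'\vDash\vphi$, i.e. $w''\Vdash_{\mc{M}'}\vphi$ for every $w''\in W_{\mc{M}'}$; in particular $f(w)\Vdash_{\mc{M}'}\vphi$. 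Discharging the quantifiers over $\mc{M}',f$ yields $w\Vdash^{\mc{M}}_\mrm{svi}\vphi$, and discharging those over $\mc{M},w$ yields $X\vDash_\mrm{svi}\vphi$.

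I do not expect any genuine obstacle here: the proposition holds precisely because svi-forcing is a universal, ``intersection''-style condition over admissible target structures, and an intuitionistic validity is forced at every world of every such structure regardless of the truth interpretation assigned to it. The only point requiring a moment's care is the observation that the forcing relation of Definition \ref{forcing relation}, despite its bespoke atomic clauses for arithmetic and for $\T$, is still a standard persistent/hereditary intuitionistic Kripke forcing relation, which is exactly what licenses the appeal to soundness.
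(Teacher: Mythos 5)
Your proof is correct and follows exactly the route the paper intends: the paper states this proposition without proof, relying on the soundness of intuitionistic predicate calculus with respect to persistent first-order intuitionistic Kripke $\omega$-structures recorded just after Definition \ref{forcing relation}, which is precisely the fact you invoke after unfolding the two layers of quantifiers. Your observation that the embedding conditions on $f$ and the particular set $X$ play no role is accurate and is the reason the result holds for arbitrary $X$.
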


\noindent On the other hand, $X\vDash_\mrm{svi} \vphi$ will not always hold when $\vphi$ is a classical but not intuitionistic validity. For example:

\begin{exmp}
Let $X\subsetneq \mrm{SENT}_{\lt}$ be such that there is a sentence $\vphi$ of $\lt$ for which $\#\vphi\notin X$ and $\#\neg\vphi \notin X$.
Then $X\nvDash_\mrm{svi} \T\corn{\vphi}\vee \neg \T\corn{\vphi}$.
\end{exmp}
\begin{proof}
Take any structure $\mc{M}$ and worlds $w_0, w_1\in W_\mc{M}$ such that:

\begin{itemize}
    \item $\mc{I}_{\mc{M}} (w_0)=X$, 
    \item $\mc{I}_{\mc{M}}(w_1)=X\cup\{\#\vphi\}$
    \item $w_0\leq_\mc{M} w_1$
\end{itemize}

Consider the identity embedding. Since $w\nVdash_{\mc{M}} \T\corn{\vphi}\vee \neg \T\corn{\vphi}$, then $w\nVdash^{\mc{M}}_\mrm{svi} \T\corn{\vphi}\vee \neg \T\corn{\vphi}$, whence $X\nvDash_\mrm{svi} \T\corn{\vphi}\vee \neg \T\corn{\vphi}$ follows.
\end{proof}

\noindent The result in Proposition \ref{svi-intuitionistic-validities} can in fact be refined in the following way:

\begin{prop}\label{svi-intersection}
Let $X\subseteq \mrm{SENT}_{\lt} $. Then $\forall \mc{M},w(\mc{I}_\mc{M}(w)=X\Ra w\Vdash_\mc{M}\vphi)$ implies $\#\vphi\in\mc{J}_\mrm{svi}(X)$. 
\end{prop}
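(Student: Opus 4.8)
The plan is to unwind both the definition of $\mc{J}_\mrm{svi}$ and of svi-forcing, reducing the statement to a single concrete forcing claim, and then to exploit persistence together with a small surgery on structures. Spelling out the conclusion, $\#\vphi\in\mc{J}_\mrm{svi}(X)$ means that for every structure $\mc{M}$ and every $w\in W_\mc{M}$ with $\mc{I}_\mc{M}(w)=X$ we have $w\Vdash^{\mc{M}}_\mrm{svi}\vphi$, which in turn unfolds (Definition \ref{svi-forcing}) to: for every $\mc{M}'$ and every EI function $f$ with $\mc{I}_\mc{M}\leqq_f\mc{I}_{\mc{M}'}$ we have $f(w)\Vdash_{\mc{M}'}\vphi$. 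So I would fix arbitrary such $\mc{M}, w, \mc{M}', f$ and aim to prove $f(w)\Vdash_{\mc{M}'}\vphi$. The tempting move---applying the hypothesis directly at $\mc{M}'$ and $f(w)$---fails, because clause 3 of Definition \ref{embedding function} only yields $X=\mc{I}_\mc{M}(w)\subseteq\mc{I}_{\mc{M}'}(f(w))$, and this inclusion may be \emph{strict}; the hypothesis, by contrast, speaks only about worlds whose interpretation is exactly $X$.

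To bridge this gap I would build an auxiliary structure $\mc{N}$ by adjoining to $\mc{M}'$ a fresh world $v_0$ placed strictly below $f(w)$ and its entire up-set, setting $\mc{I}_\mc{N}(v_0):=X$ and leaving all other worlds, orderings, and interpretations as in $\mc{M}'$. Concretely, $v_0\leq_\mc{N} u$ holds exactly for those $u$ with $u\geq_{\mc{M}'}f(w)$ (together with $v_0\leq_\mc{N} v_0$); one checks routinely that $\leq_\mc{N}$ is still a partial order. The essential point is that $\mc{N}$ remains a \emph{persistent} structure, i.e., that the hereditary condition survives: the only new comparabilities are $v_0\leq_\mc{N} u$ for $u\geq_{\mc{M}'}f(w)$, and for each such $u$ one has $\mc{I}_\mc{N}(v_0)=X\subseteq\mc{I}_{\mc{M}'}(f(w))\subseteq\mc{I}_{\mc{M}'}(u)=\mc{I}_\mc{N}(u)$, using the inclusion noted above together with the hereditary condition already holding in $\mc{M}'$.

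With $\mc{N}$ in hand the argument closes quickly. Since $\mc{I}_\mc{N}(v_0)=X$, the hypothesis applies to $\mc{N}$ at $v_0$ and gives $v_0\Vdash_\mc{N}\vphi$. As $v_0\leq_\mc{N}f(w)$, persistence (Proposition 1) yields $f(w)\Vdash_\mc{N}\vphi$. Finally, the up-set of $f(w)$ in $\mc{N}$ coincides, as a structure, with the up-set of $f(w)$ in $\mc{M}'$ (inserting $v_0$ below $f(w)$ changes nothing at or above $f(w)$), so the invariance of forcing under truncation (the Proposition on truncated structures) gives $f(w)\Vdash_\mc{N}\vphi\Lra f(w)\Vdash_{\mc{M}'}\vphi$, whence $f(w)\Vdash_{\mc{M}'}\vphi$, as required. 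Since $\mc{M}, w, \mc{M}', f$ were arbitrary, this establishes $\#\vphi\in\mc{J}_\mrm{svi}(X)$. The main obstacle is precisely the gap flagged in the first paragraph---svi-forcing compels one to reason at worlds whose truth interpretation strictly extends $X$, where the hypothesis is silent---and essentially the whole content of the proof is the observation that persistence lets one transport forcing \emph{upward} from a freshly inserted world carrying the exact interpretation $X$.
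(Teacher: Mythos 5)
Your proof is correct and follows essentially the same route as the paper's: both insert a fresh world with interpretation exactly $X$ below $f(w)$, apply the hypothesis there, transport forcing upward by persistence, and use invariance of forcing under truncation to return to the original structure. The only cosmetic difference is that the paper first truncates to the up-set of $f(w)$ and then adds a root, whereas you graft the new world directly onto $\mc{M}'$; the verification of the hereditary condition is identical in both versions.
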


\begin{proof}
Pick some arbitrary $\mc{M}$ and $w$ s.t. $\mc{I}_\mc{M}(w)=X$. It suffices to show $w\Vdash^\mc{M}_\mrm{svi}\vphi$. So take a structure  $\mc{N}$ and function $f$ s.t. $\mc{I}_\mc{M}\leqq_f \mc{I}_\mc{N}$, and show $f(w)\Vdash_\mc{N}\vphi$. Since $\mc{I}_\mc{N}(f(w))=X'\supseteq X$, take the truncated structure $\mc{N}_{f(w)}$. Now, create a new structure $\mc{N}'$ by adding a root $n'$ that accesses $f(w)$, and such that $\mc{I}_{\mc{N}'}(n')=X$. It's easy to see that this structure is well-defined and, by assumption, $n'\Vdash_{\mc{N}'}\vphi$. So, by hereditariness, $f(w)\Vdash_{\mc{N}'}\vphi$, which entails $f(w)\Vdash_{\mc{N}_{f(w)}}\vphi$. Finally, the latter entails $f(w)\Vdash_\mc{N}\vphi$. 
\end{proof}

\noindent That is, the SVI-jump over $X$ really captures whatever is satisfied by all worlds with truth interpretation equal to $X$.

\subsection{Fixed-points and fixed-point models}\label{fixed-points and fixed-point models}

Let us now investigate the SVI-jump and, in particular, the fixed-points that it gives rise to. First, we show that said fixed-points exist. 

\begin{prop}\label{monotonicity svi operator}
The following holds:
\begin{itemize}
    \item[i)] $\mc{J}_\mrm{svi}(\cdot)$ is monotonic, i.e., $X\subseteq Y\Ra \mc{J}_\mrm{svi}(X)\subseteq \mc{J}_\mrm{svi}(Y)$. Therefore, there are fixed points of $\mc{J}_\mrm{svi}(\cdot)$.
    \item[ii)] For all $\mc{M}$ and worlds $w\in W_\mc{M}$, there exists a structure $\mc{M}'$ satisfying the admissibility requirements. Therefore, there are non-degenerate fixed points of $\mc{J}_\mrm{svi}(\cdot)$.
\end{itemize}
\end{prop}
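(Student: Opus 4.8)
The plan is to get (i) from a single root-relabelling construction and to dispatch (ii) by observing that the identity embedding is always admissible. The two halves of the statement are very unequal in difficulty: (ii) is essentially a one-line observation, whereas (i) requires moving a witness between interpretations of different size.

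For (i), I would assume $X\subseteq Y$ and $\#\vphi\in\mc{J}_\mrm{svi}(X)$, and unwind the two definitions to see exactly what must be shown. Membership $\#\vphi\in\mc{J}_\mrm{svi}(Y)$ asks me to fix an arbitrary $\mc{N}$ and $v\in W_\mc{N}$ with $\mc{I}_\mc{N}(v)=Y$, then an arbitrary admissible $\mc{N}'$ and $g$ with $\mc{I}_\mc{N}\leqq_g\mc{I}_{\mc{N}'}$, and to conclude $g(v)\Vdash_{\mc{N}'}\vphi$. By clause 3 of Definition \ref{embedding function} we have $Z:=\mc{I}_{\mc{N}'}(g(v))\supseteq\mc{I}_\mc{N}(v)=Y\supseteq X$. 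The key step is to pass to the truncated structure $\mc{N}'_{g(v)}$, in which $g(v)$ is the root, and to build a fresh structure $\mc{M}$ that agrees with $\mc{N}'_{g(v)}$ everywhere except that the interpretation at the root is lowered from $Z$ down to $X$. Since the root sits below every other world of $\mc{N}'_{g(v)}$, each of which (by hereditariness in $\mc{N}'$) carries an interpretation $\supseteq Z\supseteq X$, the hereditary condition survives the lowering, so $\mc{M}$ is a legitimate structure with $\mc{I}_\mc{M}(g(v))=X$.

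Two applications then close the loop. First, because $\#\vphi\in\mc{J}_\mrm{svi}(X)$ and $\mc{I}_\mc{M}(g(v))=X$, the jump definition gives $g(v)\Vdash^{\mc{M}}_\mrm{svi}\vphi$. Second, I would check that the identity map is an EI function $\mc{I}_\mc{M}\leqq_{\mathrm{id}}\mc{I}_{\mc{N}'_{g(v)}}$: it is injective, it preserves $\leq$ in both directions (the frames literally coincide), and it preserves membership (at the root $X\subseteq Z$, and at every other world the two interpretations are equal). Feeding this embedding into $g(v)\Vdash^{\mc{M}}_\mrm{svi}\vphi$ via Definition \ref{svi-forcing} yields $g(v)\Vdash_{\mc{N}'_{g(v)}}\vphi$, and the truncation-invariance proposition proved above upgrades this to $g(v)\Vdash_{\mc{N}'}\vphi$, which is what was needed. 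Monotonicity in hand, the existence of fixed points (including a least one) follows from Knaster–Tarski, $\mc{J}_\mrm{svi}$ being a monotone operator on the complete lattice $\mc{P}(\mrm{SENT}_{\lt})$.

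For (ii), the existence of an admissible extension is immediate: for any $\mc{M}$ the identity map witnesses $\mc{I}_\mc{M}\leqq_{\mathrm{id}}\mc{I}_\mc{M}$, so $\mc{M}$ itself is admissible and the admissibility requirement is never vacuous. The non-degeneracy payoff comes from instantiating the universal quantifier in Definition \ref{svi-forcing} with this identity embedding, which shows $w\Vdash^{\mc{M}}_\mrm{svi}\vphi\Rightarrow w\Vdash_\mc{M}\vphi$. Hence, taking any structure with a world $w$ such that $\mc{I}_\mc{M}(w)=X$ and recalling clause 1 of the forcing definition, $w\nVdash_\mc{M}\bot$ forces $w\nVdash^{\mc{M}}_\mrm{svi}\bot$, so $\#\bot\notin\mc{J}_\mrm{svi}(X)$ for every $X$; thus no fixed point is the whole of $\mrm{SENT}_{\lt}$, and the fixed points from (i) are non-degenerate. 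The main obstacle I anticipate is wholly inside (i): verifying that lowering the root interpretation keeps the structure hereditary, and that the identity really is an EI function in the direction needed, so that the hypothesis $\#\vphi\in\mc{J}_\mrm{svi}(X)$ can be transported back to the given extension $\mc{N}'$ through truncation invariance.
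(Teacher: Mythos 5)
Your proof is correct and follows essentially the same route as the paper's: the paper argues the contrapositive, but the core construction is identical — truncate at the image of the given world, lower the root's truth interpretation to the smaller set (checking hereditariness survives), use the identity map as the EI function, and transfer the forcing fact back through truncation invariance; part (ii) is likewise handled by the identity embedding. Your version just runs the argument in the direct direction and spells out the non-degeneracy claim ($\#\bot\notin\mc{J}_\mrm{svi}(X)$) slightly more explicitly than the paper does.
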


\begin{proof}
For i): Let $\#\vphi\notin \mc{J}_\mrm{svi}(Y)$. Then fix $\mc{M}$ and $w\in W_\mc{M}$ such that $\mc{I}_\mc{M}(w)=Y$ and fix $\mc{M}'$ such that $\mc{I}_\mc{M}\leqq_f \mc{I}_{\mc{M}'}$ and $f(w)\nVdash_{\mc{M}'}\vphi$. Now, let $\mc{M}_0$ be the truncated structure $\mc{M}_{f(w)}$, and define a new structure $\mc{M}'_0$ which is exactly like $\mc{M}_0$ but $\mc{I}_{\mc{M}'_0}(f(w))=X$. Then $\mc{I}_{\mc{M}'_0}\leqq_\mrm{id} \mc{I}_{\mc{M}_0}$ (where $\mrm{id}$ is the identity function) and $f(w)\nVdash_{\mc{M}_0}\vphi$, so $\#\vphi\notin \mc{J}_\mrm{svi}(X)$.

For ii): just take $\mc{M}$ itself. 
\end{proof}

\noindent Note that there is an alternative way of defining the Kripke jump:

\begin{dfn}\

\begin{center}
$\mc{J}_\mrm{svi'}(X):=\{\# \vphi \sth \forall \mc{M}, \forall w\in W_\mc{M}(\mc{I}_{\mc{M}} (w)\supseteq X \Rightarrow w\Vdash^{\mc{M}}_\mrm{svi}\vphi)\}$  
\end{center}
\end{dfn}

\begin{prop}
The following holds:
\begin{itemize}
    \item[i)] $\mc{J}_\mrm{svi'}(\cdot)$ is monotonic, i.e., $X\subseteq Y\Ra \mc{J}_\mrm{svi'}(X)\subseteq \mc{J}_\mrm{svi'}(Y)$. Therefore, there are fixed points of $\mc{J}_\mrm{svi'}(\cdot)$.
    \item[ii)] For all structures $\mc{M}$ and worlds $w\in W_\mc{M}$, there exists a structure $\mc{M}'$ satisfying the admissibility requirements. Therefore, there are non-degenerate fixed points of $\mc{J}_\mrm{svi'}(\cdot)$.
\end{itemize}
\end{prop}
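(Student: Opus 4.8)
The plan is to follow the proof of Proposition~\ref{monotonicity svi operator}, observing that replacing the equality condition $\mc{I}_\mc{M}(w)=X$ by the containment condition $\mc{I}_\mc{M}(w)\supseteq X$ only makes part~i) easier, while leaving part~ii) essentially unchanged.

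For i), I would establish monotonicity directly, with no need for the truncation argument used in the svi case. Suppose $X\subseteq Y$ and $\#\vphi\in\mc{J}_\mrm{svi'}(X)$. To show $\#\vphi\in\mc{J}_\mrm{svi'}(Y)$, take an arbitrary $\mc{M}$ and $w\in W_\mc{M}$ with $\mc{I}_\mc{M}(w)\supseteq Y$. Then $X\subseteq Y\subseteq\mc{I}_\mc{M}(w)$, so the very same pair $(\mc{M},w)$ also satisfies the defining condition for $\mc{J}_\mrm{svi'}(X)$; hence $\#\vphi\in\mc{J}_\mrm{svi'}(X)$ already gives $w\Vdash^{\mc{M}}_{\mrm{svi}}\vphi$. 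As $(\mc{M},w)$ was arbitrary, $\#\vphi\in\mc{J}_\mrm{svi'}(Y)$, and therefore $\mc{J}_\mrm{svi'}(X)\subseteq\mc{J}_\mrm{svi'}(Y)$. Existence of fixed points, including a least one, then follows from the standard theory of positive inductive definitions applied to the monotone operator $\mc{J}_\mrm{svi'}(\cdot)$ on the complete lattice $(\mc{P}(\mrm{SENT}_{\lt}),\subseteq)$.

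For ii), the svi-forcing relation and its admissibility requirement $\mc{I}_\mc{M}\leqq_f\mc{I}_{\mc{M}'}$ are literally the same objects here as in the svi case, so the witness is again $\mc{M}$ itself: for any $\mc{M}$, the identity map is injective, preserves and reflects $\leq_\mc{M}$, and preserves $\mc{I}_\mc{M}$, hence is an EI function, giving $\mc{I}_\mc{M}\leqq_{\mrm{id}}\mc{I}_\mc{M}$. This shows the universal quantifier in the definiens of svi-forcing never ranges over the empty collection, so svi-forcing is not vacuously satisfied. To convert this into non-degeneracy, I would argue that $\bot$ is never svi-forced: given any $X$, take the one-world persistent structure $\mc{M}$ with $\mc{I}_\mc{M}(w)=X$ (so $\mc{I}_\mc{M}(w)\supseteq X$); the admissible pair $(\mc{M},\mrm{id})$ witnesses $\mrm{id}(w)=w\nVdash_{\mc{M}}\bot$ by clause~(1) of Definition~\ref{forcing relation}, whence $w\nVdash^{\mc{M}}_{\mrm{svi}}\bot$ and so $\#\bot\notin\mc{J}_\mrm{svi'}(X)$. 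Since this holds for every $X$, no fixed point of $\mc{J}_\mrm{svi'}(\cdot)$ contains $\#\bot$; in particular every fixed point, and hence the least one, is proper, i.e.\ non-degenerate.

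I expect no serious obstacle: the argument is routine once two points are kept straight. The first is the direction of monotonicity---because $\mc{J}_\mrm{svi'}(X)$ is a universal statement over those $(\mc{M},w)$ with $\mc{I}_\mc{M}(w)\supseteq X$, enlarging $X$ shrinks the range of that quantifier and thus weakens the condition, which is exactly why the operator is monotone and why, unlike for $\mc{J}_\mrm{svi}$, no auxiliary construction is required. The second is the reading of ``non-degenerate''; I take it to mean ``distinct from $\mrm{SENT}_{\lt}$'', and the observation that $\#\bot$ can never enter the jump is what secures this.
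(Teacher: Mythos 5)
Your proof is correct and follows essentially the same route as the paper's: part i) rests on the same observation that $\mc{I}_\mc{M}(w)\supseteq Y$ implies $\mc{I}_\mc{M}(w)\supseteq X$ when $X\subseteq Y$ (the paper states this contrapositively, you state it directly), and part ii) uses the identity embedding exactly as the paper does. Your additional argument that $\#\bot$ never enters the jump is a sound and welcome elaboration of what "non-degenerate" amounts to, which the paper leaves implicit.
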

\begin{proof}
For i): for any structure $\mc{M}$, world $w\in W_\mc{M}$, $\mc{I}_{\mc{M}} (w)\supseteq Y$ implies $\mc{I}_{\mc{M}} (w)\supseteq X$ when $X\subseteq Y$. Therefore, if $\#\vphi \notin \mc{J}_\mrm{svi'}(Y)$, the structure $\mc{M}'$ and world $w'$ such that $\mc{I}_{\mc{M}'}(w') \supseteq Y$ and $w'\nVdash^{\mc{M}}_\mrm{svi}\vphi$ is a counterexample for $\#\vphi \in \mc{J}_\mrm{svi'}(X)$. 
For ii): as before. 
\end{proof}

\noindent Nonetheless, in what follows, we will stick to the jump $\mc{J}_\mrm{svi}$, as it will make the proofs in section \ref{section:compositional supervaluations} more immediate. Once we know there are fixed points of the operator, we can define fixed-point structures in the following way: 

\begin{dfn}
A fixed-point structure of the scheme $\mrm{SVI}$ is a persistent first-order intuitionistic Kripke $\omega$-structure $\mc{M}:=\langle W_\mc{M}, \leq_\mc{M}, \mc{D}_\mc{M}, \mc{I}_\mc{M}\rangle$ such that for all $w\in W_\mc{M}$, there is an $X\subseteq \mrm{SENT}_{\lt} $ with $X=\mc{J}_\mrm{svi}(X)$ and $\mc{I}_\mc{M}(w)= X$.
\end{dfn}

\noindent In short, a fixed-point structure is a structure of the kind we have been considering where the extension of $\T$ at any world is a fixed point of SVI.

The main advantage of the Kripkean approach to truth is that it yields models which are transparent with respect to the truth predicate. Thus, the fixed-point construction with the Strong Kleene schema gives us interpretations $(X^+, X^-)$ (i.e., extension and antiextension) such that 
\begin{equation*}
    \langle \nat, X^+, X^-\rangle \vDash_\mrm{sk} \vphi \Leftrightarrow \langle \nat, X^+, X^-\rangle\vDash_\mrm{sk} \T\corn\vphi. 
 \end{equation*}
\noindent Similarly, the fixed-point supervaluationist approach, as sketched in (\ref{basic classical supervaluation}) above, gives an interpretation $X$ such that 
\begin{equation*}
    \langle \nat, X\rangle \vDash_\mrm{sv} \vphi \Leftrightarrow \langle \nat, X \rangle \vDash_\mrm{sv} \T\corn\vphi. 
 \end{equation*}
In our intuitionistic approach, transparent models can also be given on the basis of fixed-points of the operator $\mc{J}_\mrm{svi}$. In particular, we can provide transparent models for every fixed-point of $\mc{J}_\mrm{svi}$ based on the one-world frame. This is to be contrasted with the possibility of providing transparent models over arbitrary frames, which we could not establish---cf. section \ref{fixed-frame approach}.

As a notational add-on, for $\mc{M}:=\langle W_\mc{M}, \leq_\mc{M}, \mc{D}_\mc{M}, \mc{I}_\mc{M}\rangle$ a structure, we write $\mc{M}\vDash_\mrm{svi}\vphi$ when $w\Vdash^\mc{M}_\mrm{svi}\vphi$ holds for all $w\in W_\mc{M}$.
\begin{prop}\label{transparent models}
Let $X\subseteq \mrm{SENT}_{\lt}$ be such that $X=\mc{J}_\mrm{svi}(X)$. Let $\mc{M}:=\langle W_\mc{M}, \leq_\mc{M}, \mc{D}_\mc{M}, \mc{I}_\mc{M}\rangle$ be a persistent first-order intuitionistic Kripke $\omega$-structure given by:

\begin{itemize}
    \item $W_\mc{M}=\{w\}$.
    \item $\leq_\mc{M}=\{\langle w,w\rangle\}$.
    \item $\mc{I}_\mc{M}=\{\langle w, \#\vphi \rangle \sth \#\vphi \in X\} $.
\end{itemize}

\noindent Then, $w\Vdash^\mc{M}_\mrm{svi}\vphi \Leftrightarrow w\Vdash^\mc{M}_\mrm{svi}\T\corn\vphi$. Therefore, $\mc{M}\vDash_\mrm{svi}\vphi \Leftrightarrow \mc{M}\vDash_\mrm{svi}\T\corn\vphi$. 
\end{prop}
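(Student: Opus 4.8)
The plan is to show that each side of the biconditional is equivalent to the single membership condition $\#\vphi \in X$, and then to chain the two equivalences. Since $\mc{M}$ has the unique world $w$ with $\mc{I}_\mc{M}(w)=X$, the statement about $\vDash_\mrm{svi}$ falls out immediately from the statement about $w\Vdash^{\mc{M}}_\mrm{svi}$, so I would concentrate entirely on proving $w\Vdash^{\mc{M}}_\mrm{svi}\vphi \Lra \#\vphi\in X$ and $w\Vdash^{\mc{M}}_\mrm{svi}\T\corn\vphi \Lra \#\vphi\in X$.

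First I would dispatch the truth side, $w\Vdash^{\mc{M}}_\mrm{svi}\T\corn\vphi \Lra \#\vphi\in X$. Unwinding Definition \ref{svi-forcing} together with clause 3 of Definition \ref{forcing relation} (and using $(\corn\vphi)^\nat=\#\vphi$), the relation $w\Vdash^{\mc{M}}_\mrm{svi}\T\corn\vphi$ says exactly that $\#\vphi\in\mc{I}_{\mc{M}'}(f(w))$ for every EI function $\mc{I}_\mc{M}\leqq_f\mc{I}_{\mc{M}'}$. For the direction from right to left, if $\#\vphi\in X=\mc{I}_\mc{M}(w)$ then clause 3 of Definition \ref{embedding function} transports $\#\vphi$ into $\mc{I}_{\mc{M}'}(f(w))$ for any such $f$, giving $f(w)\Vdash_{\mc{M}'}\T\corn\vphi$. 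For the converse it suffices to instantiate with the identity embedding $\mc{I}_\mc{M}\leqq_\mrm{id}\mc{I}_\mc{M}$, which yields $\#\vphi\in\mc{I}_\mc{M}(w)=X$.

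The substantive half is the formula side, $w\Vdash^{\mc{M}}_\mrm{svi}\vphi \Lra \#\vphi\in X$. The direction $\#\vphi\in X\Ra w\Vdash^{\mc{M}}_\mrm{svi}\vphi$ is read off directly from $X=\mc{J}_\mrm{svi}(X)$: since $\mc{I}_\mc{M}(w)=X$, the defining clause of the jump applied to the pair $(\mc{M},w)$ yields $w\Vdash^{\mc{M}}_\mrm{svi}\vphi$. For the reverse direction I would show $\#\vphi\in\mc{J}_\mrm{svi}(X)$ by verifying $v\Vdash^{\mc{N}}_\mrm{svi}\vphi$ for an arbitrary $\mc{N},v$ with $\mc{I}_\mc{N}(v)=X$. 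Fix any witness $\mc{I}_\mc{N}\leqq_g\mc{I}_{\mc{N}'}$; the key move is to define $h:W_\mc{M}\to W_{\mc{N}'}$ by $h(w)=g(v)$ and check that $h$ is an EI function, i.e.\ $\mc{I}_\mc{M}\leqq_h\mc{I}_{\mc{N}'}$. Injectivity and the order condition are trivial because $W_\mc{M}$ is a singleton and $g(v)\leq_{\mc{N}'}g(v)$ by reflexivity; the interpretation condition reduces to $\#\psi\in X\Ra\#\psi\in\mc{I}_{\mc{N}'}(g(v))$, which follows by composing $\mc{I}_\mc{N}(v)=X$ with clause 3 of the EI property for $g$. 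Then $w\Vdash^{\mc{M}}_\mrm{svi}\vphi$ applied to $h$ gives $g(v)=h(w)\Vdash_{\mc{N}'}\vphi$; as $g$ was arbitrary, $v\Vdash^{\mc{N}}_\mrm{svi}\vphi$, and as $(\mc{N},v)$ was arbitrary, $\#\vphi\in\mc{J}_\mrm{svi}(X)=X$.

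I expect the main obstacle to be precisely this last transport step: the svi-forcing hypothesis is attached to $w$ inside the one-world structure $\mc{M}$, whereas the fixed-point condition demands information about every world interpreting $X$ in every structure, so one must manufacture, for each external witness $\mc{N}'$, a fresh embedding of $\mc{M}$ that lands exactly on the relevant world $g(v)$. The one-world design of $\mc{M}$ is what makes this embedding cost-free, since there are no order or coherence constraints among several worlds to be respected; this is also why the transparency result is claimed for the one-world frame and (per the remark preceding the proposition) not established for arbitrary frames.
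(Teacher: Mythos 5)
Your proposal is correct and follows essentially the same route as the paper: both directions are routed through the pivot $\#\vphi\in X$, with the truth side handled by clause 3 of the EI-function definition plus the identity embedding, and the substantive direction handled by embedding the one-world structure $\mc{M}$ onto the relevant world of an arbitrary witness structure (your map $h$ with $h(w)=g(v)$ is exactly the composition $f\circ g$ the paper uses). Your write-up merely makes explicit the two sub-equivalences and the verification that $h$ is an EI function, which the paper leaves partly implicit.
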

\begin{proof}
The second claim follows by definition from the first; we prove the latter.\\
Left-to-right: assume $w\Vdash^\mc{M}_\mrm{svi}\vphi$. We want to show $\#\vphi \in X$, i.e.,
\begin{equation*}
    \forall \mc{M}'\forall w'(\mc{I}(w')=X\Ra w'\Vdash^\mc{M'}_\mrm{svi}\vphi)
\end{equation*}
\noindent So take arbitrary $\mc{M'}$ and $w'\in W_\mc{M'}$ with $\mc{I}(w')=X$. In order to prove $w'\Vdash_\mrm{svi}\vphi$, we take $\mc{N}$ with $\mc{I}_\mc{M'}\leqq_f\mc{I}_\mc{N}$, and we want to show $f(w')\Vdash_\mc{N} \vphi$. But now, since $\mc{M}$ is a one-world structure, the function that $g$ that sends $w$ to $w'$ is an EI function between $\mc{M}$ and $\mc{M'}$, and so $\mc{I}_\mc{M}\leqq_g \mc{I}_\mc{M'}$. Composing $g$ and $f$ we get a function $h$ such that $\mc{I}_\mc{M}\leqq_h \mc{I}_\mc{N}$, with $h(w)=f(w')$. From $w\Vdash^\mc{M}_\mrm{svi}\vphi$, we obtain $h(w)\Vdash_\mc{N} \vphi$, i.e., $f(w')\Vdash_\mc{N} \vphi$, as desired. 

Right-to-left: $w\Vdash^\mc{M}_\mrm{svi}\T\corn\vphi$ implies $\#\vphi\in X$. Hence, since $X=\mc{J}_\mrm{sv}(X)$, $\forall \mc{M}'\forall w'(\mc{I}(w')=X\Ra w'\Vdash^\mc{M'}_\mrm{svi}\vphi)$. Let $\mc{M}'$ be $\mc{M}$ and $w'$ be $w$, it follows that $w\Vdash^\mc{M}_\mrm{svi}\vphi$, as desired. 
\end{proof}

\noindent Isolated, then, worlds can exhibit a transparent behaviour when coupled with a fixed-point as their interpretation of truth. Things change if we consider larger structures. It is easy to see where the proof for Proposition \ref{transparent models} will fail if we considered more complex, many-worlds frames. While the move from  $w\Vdash^\mc{M}_\mrm{svi}\T\corn\vphi$ to $w\Vdash^\mc{M}_\mrm{svi}\vphi$ would still go through, the argument for the other direction would break: the Kripke models $\mc{M}'$ over which the jump operator $\mc{J}_\mrm{sv}$ ranges might be structurally very different from the original model $\mc{M}$ where $\vphi$ is svi-forced, hence impeding that $\mc{I}_\mc{M}$ be embedded in $\mc{I}_\mc{M}'$. In other words: since our jump ranges over all Kripke models where some world's interpretation of truth is a fixed-point, knowing that $w\Vdash^\mc{M}_\mrm{svi}\vphi$ does not suffice, as it does not guarantee that $\vphi$ is supervaluationally forced in models based on different frames. 

In spite of this, fixed-points themselves \textit{are} transparent: 

\begin{prop}
Let $X\subseteq \mrm{SENT}_{\lt}$ be such that $X=\mc{J}_\mrm{svi}(X)$. Then, $\#\vphi \in X \Lra \#\T\corn\vphi\in X$. 
\end{prop}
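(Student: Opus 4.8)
The goal is to show that for a fixed point $X = \mc{J}_\mrm{svi}(X)$ we have $\#\vphi \in X \Lra \#\T\corn\vphi \in X$.

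The plan is to exploit the transparent one-world model $\mc{M}$ constructed in Proposition \ref{transparent models} and simply translate its conclusion back into a statement about membership in $X$. First I would invoke Proposition \ref{transparent models}: given the fixed point $X$, let $\mc{M}$ be the one-world structure with $W_\mc{M} = \{w\}$, reflexive order, and $\mc{I}_\mc{M}(w) = X$. That proposition already gives us $w \Vdash^\mc{M}_\mrm{svi} \vphi \Lra w \Vdash^\mc{M}_\mrm{svi} \T\corn\vphi$. So it suffices to connect each side of this biconditional with the corresponding membership claim in $X$.

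The key bridging fact is that for a \emph{one-world} structure whose interpretation equals the fixed point $X$, svi-forcing at $w$ coincides exactly with membership in $X = \mc{J}_\mrm{svi}(X)$. Concretely, I would argue $w \Vdash^\mc{M}_\mrm{svi} \psi \Lra \#\psi \in X$ for any sentence $\psi$, and then instantiate once with $\psi = \vphi$ and once with $\psi = \T\corn\vphi$. The left-to-right direction is essentially the argument already run inside the proof of Proposition \ref{transparent models}: since $\mc{M}$ has a single world, for any structure $\mc{M}'$ and world $w'$ with $\mc{I}_{\mc{M}'}(w') = X$ the map $w \mapsto w'$ is an EI function, so $w \Vdash^\mc{M}_\mrm{svi}\psi$ forces $w' \Vdash^{\mc{M}'}_\mrm{svi}\psi$ for all such $w'$, which is precisely $\#\psi \in \mc{J}_\mrm{svi}(X) = X$. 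For right-to-left, if $\#\psi \in X = \mc{J}_\mrm{svi}(X)$ then by the definition of the jump every world interpreting truth as $X$ svi-forces $\psi$; taking $\mc{M}'$ to be $\mc{M}$ and $w'$ to be $w$ yields $w \Vdash^\mc{M}_\mrm{svi}\psi$.

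Putting these together, $\#\vphi \in X \Lra w \Vdash^\mc{M}_\mrm{svi}\vphi \Lra w \Vdash^\mc{M}_\mrm{svi}\T\corn\vphi \Lra \#\T\corn\vphi \in X$, which is the desired equivalence. I do not expect a serious obstacle here, since the substantive work was already carried out in Proposition \ref{transparent models}; the only point requiring minor care is verifying that the equivalence $w \Vdash^\mc{M}_\mrm{svi}\psi \Lra \#\psi \in X$ is genuinely available for \emph{both} $\psi = \vphi$ and $\psi = \T\corn\vphi$ uniformly, rather than only for $\vphi$. This is immediate because the one-world-embedding argument makes no assumption on the syntactic shape of the sentence, so it applies verbatim to $\T\corn\vphi$ as well.
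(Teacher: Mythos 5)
Your proof is correct, but it takes a different route from the paper's. The paper argues directly from the definition of the jump and the atomic forcing clause for $\T$: for left-to-right, if $\#\vphi\in X$ then any EI image $f(w)$ of a world $w$ with $\mc{I}_\mc{M}(w)=X$ has $\#\vphi\in\mc{I}_{\mc{M}'}(f(w))\supseteq X$, so $f(w)\Vdash_{\mc{M}'}\T\corn{\vphi}$ outright, giving $\#\T\corn{\vphi}\in\mc{J}_\mrm{svi}(X)=X$; for right-to-left, $w\Vdash^\mc{M}_\mrm{svi}\T\corn{\vphi}$ yields $w\Vdash_\mc{M}\T\corn{\vphi}$ via the identity embedding, which by the atomic clause forces $\#\vphi\in\mc{I}_\mc{M}(w)=X$. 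You instead factor the statement through Proposition \ref{transparent models}: you take the one-world fixed-point model, establish the bridging equivalence $w\Vdash^\mc{M}_\mrm{svi}\psi\Lra\#\psi\in X$ uniformly in $\psi$ (both directions of which check out --- the left-to-right via composing the singleton EI map $w\mapsto w'$ with an arbitrary embedding, the right-to-left by instantiating the jump condition at $\mc{M},w$ itself), and then chain it with the transparency of that model. Your approach buys economy by reusing prior work and makes explicit the useful general fact that, on the one-world fixed-point model, svi-forcing and membership in the fixed point coincide for arbitrary sentences; the paper's direct argument is shorter and pinpoints the real mechanism, namely that $\T$ is atomic and its extension is preserved under EI functions. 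Both are sound.
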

\begin{proof}
Left-to-right: assume $\#\vphi\in X$ and take $\mc{M}$, $w$ such that $\mc{I}_\mc{M}(w)=X$. Since $\#\vphi\in X$, $f(w)\Vdash_\mc{N} \T\corn\vphi$ for any $\mc{N}, f$ such that $\mc{I}_\mc{M}\leqq_f\mc{I}_\mc{N}$, hence $w\Vdash_\mrm{svi}^\mc{M}\T\corn\vphi$. Since $\mc{M}, w$ were arbitrary, $\#\T\corn\vphi\in \mc{J}_\mrm{svi}(X)=X$.\\  
Right-to-left: assume $\#\T\corn\vphi\in X$. Take $\mc{M}, w$ such that $\mc{I}_\mc{M}(w)=X$. Since $\#\T\corn\vphi\in X=\mc{J}_\mrm{svi}(X)$, $w\Vdash^\mc{M}_\mrm{svi}\T\corn\vphi$, which is only possible if $\#\vphi \in X$.
\end{proof}

\subsection{Alternative supervaluation schemes}

The classical approach formulated in (\ref{basic classical supervaluation}) above is, as we mentioned, the simplest supervaluational scheme, which is sometimes known as the SV-scheme. This basic approach considers all possible truth extensions. However, it is far from being the only approach possible. In the literature there are, in fact, three more popular schemes,\footnote{See, e.g., \cite[160-161]{mcgee_1991} and \cite{stern_2018}.} corresponding to additional conditions, or admissibility requirements, that we impose on the extensions:

\begin{align}
\label{eq:svb} X\vDash_\mrm{vb} \varphi &\text{ iff } \forall X' (X'\supseteq X \, \&\,  X' \cap X^- =\varnothing \Rightarrow X' \vDash \varphi)\\
   \label{eq:svc} X\vDash_\mrm{vc} \varphi &\text{ iff } \forall X' (X'\supseteq X\, \&\,  X' \cap X'^-=\varnothing \Rightarrow X' \vDash \varphi)\\
  \label{eq:svd}   X\vDash_\mrm{mc} \varphi &\text{ iff } \forall X' (X'\supseteq X\, \&\, \mrm{MCX}(X') \Rightarrow X' \vDash \varphi)
\end{align}

\noindent Here, $\mrm{MCX}(X)$ stands for `$X$ is a maximally consistent set of sentences of $\lt$', i.e., for any sentence $\vphi$ of $\lt$, either $\#\vphi\in X$ or $\#\neg\vphi\in X$, but not both. Moreover, given a set of sentences $X$, $X^-$ is the set of sentences $\{\#\vphi \sth \#\neg\vphi\in X\}$. In all the cases shown, the corresponding jump-operators are monotonic, hence they have fixed-points. Moreover, as long as one starts applying the operator to consistent sets of sentences, the fixed-points will be non-degenerate.\footnote{In slightly more detail: in starting with a consistent set, one guarantees that there will be truth extensions meeting the admissibility requirements. One way (but not the only) to prove this, which all of VB, VC and MC meet, is via the \textit{compactness} of the admissibility requirement $\Phi$ with respect to the set of consistent interpretations. Write $X:=\{S \subseteq \mrm{SENT}_{\lt}\sth S\cap S^-=\varnothing \}$ and $\Phi^S:=\{S'\subseteq \mrm{SENT}_{\lt} \sth S\subseteq S'\,\&\,\Phi(S')\}$. For $Y\subseteq X$, let $\Phi(Y):=\{\Phi^S \sth S\in Y\}$. Then $\Phi$ is \emph{compact on $X$} iff for all $Y\subseteq X$:
$$\text{if }\Phi^{S_1}\cap\ldots\cap\Phi^{S_n}\neq\varnothing\text{ for all }S_1,\ldots S_n\in Y\text{ and all }n\in\omega,\text{ then }\bigcap\Phi(Y)\neq\varnothing.$$
\noindent We are thankful to Johannes Stern for having clarified this issue.\label{footnote_1}} 

All of this is then reproducible in our intuitionistic setting. Given $\mc{M}$, $w\in W_\mc{M}$, write $\mc{I}^-_{\mc{M}}(w)$ for the set $\{ \#\neg\vphi: \langle w,\#\vphi\rangle \in \mc{I}_\mc{M}\}$. Then:

\begin{dfn}
Let $\mc{M}$ be a structure, and let $w\in W_\mc{M}$. We define the following supervaluational forcing relations:
\begin{align*}
    & w\Vdash^{\mc{M}}_{vbi}\vphi:  \Leftrightarrow \forall \mc{M}', f(\mc{I}_\mc{M}\leqq_f\mc{I}_{\mc{M}'} \;\& \; \forall w'\geq f(w) (\mc{I}_{\mc{M}'}(w')\cap \mc{I}^-_{\mc{M}}(w) =\varnothing) \Rightarrow w'\Vdash_{\mc{M}'} \vphi)\\
    & w\Vdash^{\mc{M}}_{vci}\vphi:  \Leftrightarrow \forall \mc{M}', f(\mc{I}_\mc{M}\leqq_f \mc{I}_{\mc{M}'} \;\& \; \forall w'\geq f(w) (\mc{I}_{\mc{M}'}(w')\cap \mc{I}^-_{\mc{M}'}(w') =\varnothing) \Rightarrow w'\Vdash_{\mc{M}'} \vphi) \\
    & w\Vdash^{\mc{M}}_{mci}\vphi: \Leftrightarrow \forall \mc{M}', f(\mc{I}_\mc{M}\leqq_f \mc{I}_{\mc{M}'} \;\& \; \forall w'\geq f(w) (\mrm{MCX}(\mc{I}_{\mc{M}'}(w')))\Rightarrow w'\Vdash_{\mc{M}'} \vphi)
\end{align*}
\end{dfn}
\noindent As with $\Vdash_\mrm{svi}$, the antecedents of the conditionals in the definitions above are known as the admissibility requirements (or conditions) for the corresponding schemes. For the sake of brevity, we write $\mrm{Ad}_1 (\mc{M}, \mc{M}', f)$, $\mrm{Ad}_2 (\mc{M}, \mc{M}', f)$ and $\mrm{Ad}_3 (\mc{M}, \mc{M}', f)$ for the admissibility requirements of vbi-, vci- and mci-forcing. Thus, for example, $\mc{I}_\mc{M}\leqq_f \mc{I}_{\mc{M}'} \;\& \; \forall w'\geq f(w) (\mc{I}_{\mc{M}'}(w')\cap \mc{I}^-_{\mc{M}}(w) =\varnothing)$ abbreviates as $\mrm{Ad}_1 (\mc{M}, \mc{M}', f)$. As before, the schemes defined by the vbi-, vci-, and mci-forcing relations will be known as, respectively, VBI, VCI, and MBI. 

It is essential that the admissibility conditions of the schemes defined above quantify over all worlds accessible from the image of the world we consider---in the case above, over all worlds accessible from $f(w)$. The reason has to do with the purpose of these conditions. They can be seen as a way of setting requirements on the interpretations to be considered, with the aim of imposing features for the \textit{internal} theory of truth, i.e., for the behaviour of the truth predicate \textit{inside} the fixed-points. For example: the scheme VC, (\ref{eq:svc}) above, imposes that the truth predicate be consistent \textit{inside} the fixed-points. These features, in all the cases we consider, take the form of a negation or conditional when formalized. In the case of VC, for instance, the feature is formalized as the sentence $\#\neg (\T t\wedge \T \subdot \neg t)$ (for any arbitrary closed term $t$), which is present in all fixed-points of the scheme. But the forcing relation for the conditional and the negation in the intuitionistic structures we consider is global (or rather, upward-looking), hence, in order to preserve these features, the admissibility condition must also be upward-looking and not local. That is, precisely, what we achieve by quantifying over all worlds accessible from the image of the world we start with.

The corresponding jumps $\mc{J}_\mrm{vbi}, \mc{J}_\mrm{vci}, \mc{J}_\mrm{mci} $ are exactly like in Definition \ref{jump operator}, using the relevant supervaluational-forcing relations. These operators are all monotonic, and thus give rise to fixed-points. These arguments for monotonicity are all replicas of the proof in Proposition \ref{monotonicity svi operator}.i. For the existence of non-degenerate fixed-points of $\mc{J}_{mci}$, note that, when the starting set of codes of sentences does not overlap with its antiextension (i.e., when, for all $\lt$-sentences $\vphi$, either of $\#\vphi$ or $\#\neg\vphi$ is not in the set), then the extra admissibility conditions on the interpretations can always be fulfilled---either by the initial interpretation itself, in the case of VBI and VCI; or by a maximally consistent extension of the initial interpretation in the case of MCI (see also footnote \ref{footnote_1}). It is also easy to notice that that: 

\begin{prop}\label{alternative schemes} Let $X$ range over consistent set of codes of sentences. Then: 

\begin{enumerate}
     \item[i)] Let $X=\mc{J}_\mrm{vbi}(X)$. Let $t^\nat\in X$ (resp., $\#\neg\psi\in X$, for $\#\psi=t^\nat$). Then $\#\neg\T \subdot\neg t\in X$ (resp. $\#\neg\T t\in X$).
    \item[ii)] Let $X=\mc{J}_\mrm{vci}(X)$. Then $\#\neg (\T t\wedge \T \subdot \neg t)\in X$ for every term $t$.
    \item[iii))] Let $X=\mc{J}_\mrm{mci}(X)$. Then $\#\neg \T t\lra \T \subdot \neg t \in X$ for every term $t$.
\end{enumerate}
\end{prop}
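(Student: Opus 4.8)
The plan is to prove each of the three items by the same two-step template: first unpack what membership in the fixed-point $X=\mc{J}(X)$ means via the relevant forcing relation, and then produce concrete witnessing structures to verify the forcing. Since $X$ is a fixed-point, $\#\chi\in X$ holds iff $\chi$ is $\ast$-forced at every world with truth interpretation equal to $X$ (for $\ast$ the relevant scheme), which by the definition of the jump reduces to checking that $f(w)\Vdash_{\mc{M}'}\chi$ at every admissible extension $\mc{M}'$. The target sentences are all built from $\neg$ and $\wedge$ and $\lra$, so the forcing clauses of Definition \ref{forcing relation} tell me I must chase an upward-looking condition: for the negations and conditionals, I need to check what happens at \emph{all} worlds $w'\geq f(w)$, and this is exactly where the upward-looking admissibility conditions $\mrm{Ad}_i$ do their work.

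For item i) (scheme VBI), suppose $t^\nat\in X$, so $\#\psi\in X$ for $\psi$ with $\#\psi=t^\nat$. I want $\#\neg\T\subdot\neg t\in X$, i.e.\ $\neg\T\subdot\neg t$ is vbi-forced at every world $w$ with $\mc{I}_\mc{M}(w)=X$. Unfolding the negation clause, I must show that for every admissible $\mc{M}',f$ and every $w'\geq f(w)$, $w'\nVdash_{\mc{M}'}\T\subdot\neg t$; that is, $\#(\neg\psi)\notin\mc{I}_{\mc{M}'}(w')$. Here the VBI admissibility condition $\mrm{Ad}_1$ gives precisely $\mc{I}_{\mc{M}'}(w')\cap\mc{I}^-_\mc{M}(w)=\varnothing$; since $\#\psi\in X=\mc{I}_\mc{M}(w)$, by definition $\#(\neg\psi)\in\mc{I}^-_\mc{M}(w)$, so the disjointness forces $\#(\neg\psi)\notin\mc{I}_{\mc{M}'}(w')$, which is what I need. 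The symmetric parenthetical case ($\#\neg\psi\in X$ yielding $\#\neg\T t\in X$) runs identically, using that $\#\neg\psi\in\mc{I}_\mc{M}(w)$ gives $\#\psi\in\mc{I}^-_\mc{M}(w)$. Item ii) (scheme VCI) is similar but uses the \emph{internal} disjointness $\mc{I}_{\mc{M}'}(w')\cap\mc{I}^-_{\mc{M}'}(w')=\varnothing$ from $\mrm{Ad}_2$: I want $\neg(\T t\wedge\T\subdot\neg t)$ vci-forced at $w$, so for every admissible $\mc{M}',f$ and $w'\geq f(w)$ I must rule out $w'\Vdash_{\mc{M}'}\T t\wedge\T\subdot\neg t$; but that would require both $t^\nat\in\mc{I}_{\mc{M}'}(w')$ and $(\subdot\neg t)^\nat=\#(\neg\psi)\in\mc{I}_{\mc{M}'}(w')$ for $\#\psi=t^\nat$, contradicting the internal disjointness at $w'$.

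For item iii) (scheme MCI), I want the biconditional $\neg\T t\lra\T\subdot\neg t$ in every mci-fixed-point. The subtlety is the conditional's upward-looking clause: $w'\Vdash(\neg\T t)\ra\T\subdot\neg t$ requires checking all $w''\geq w'$, and dually. So I would show, for every admissible $\mc{M}',f$ and every $w'\geq f(w)$, that $w'\Vdash_{\mc{M}'}\neg\T t$ iff $w'\Vdash_{\mc{M}'}\T\subdot\neg t$, using that $\mrm{Ad}_3$ makes $\mc{I}_{\mc{M}'}(w')$ maximally consistent. Maximal consistency gives exactly one of $\#\psi,\#(\neg\psi)$ in $\mc{I}_{\mc{M}'}(w')$; if $\#(\neg\psi)\in\mc{I}_{\mc{M}'}(w')$ then $w'\Vdash\T\subdot\neg t$, and I must confirm $w'\Vdash\neg\T t$, i.e.\ no $w''\geq w'$ forces $\T t$ — this follows because persistence propagates $\#(\neg\psi)$ upward and maximal consistency at each $w''\geq f(w)$ then excludes $\#\psi$. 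The converse direction uses the other disjunct of maximal consistency. The main obstacle I anticipate is exactly this interaction between the conditional/negation being evaluated at all $w''\geq w'$ and the need for maximal consistency (and hence the $\mrm{Ad}_3$ disjointness) to hold \emph{at every such $w''$}, not merely at $w'$; this is why the admissibility conditions are stated as quantifying over all worlds above $f(w)$, and I would lean on persistence together with that global quantification to close the argument cleanly.
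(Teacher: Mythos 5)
Your proof follows the paper's argument essentially step for step: unpack membership in the fixed point as forcing (under the relevant scheme) at every world whose truth interpretation is $X$, then use the admissibility condition $\mrm{Ad}_i$ --- which quantifies over all $w'\geq f(w)$ --- to discharge the upward-looking clauses for $\neg$, $\wedge$ and $\lra$. The paper writes out only the first half of i) and declares the bracketed claim, ii) and iii) ``similar'', so your treatments of ii) and iii) (in particular the observation that maximal consistency must be invoked at every $w''\geq w'$, with persistence propagating $\#\neg\psi$ upward) supply detail the paper omits, and they are correct.

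One step deserves a flag. In the parenthetical half of i) you assert that $\#\neg\psi\in\mc{I}_\mc{M}(w)$ gives $\#\psi\in\mc{I}^-_\mc{M}(w)$. Under the paper's stated definition $\mc{I}^-_{\mc{M}}(w)=\{\#\neg\vphi:\langle w,\#\vphi\rangle\in\mc{I}_\mc{M}\}$ this does not follow: what you actually get is $\#\neg\neg\psi\in\mc{I}^-_\mc{M}(w)$, and the disjointness in $\mrm{Ad}_1$ then excludes $\#\neg\neg\psi$, not $\#\psi$, from $\mc{I}_{\mc{M}'}(w')$. The inference you want is the one licensed by the \emph{classical} convention $X^-=\{\#\vphi:\#\neg\vphi\in X\}$ stated earlier in that subsection, whereas your (correct) first half of i) uses the paper's intuitionistic convention; the two halves of the proposition in fact require the two different readings of $\mc{I}^-$. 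This is arguably an infelicity in the paper's definition rather than in your strategy --- the paper's own proof merely says the bracketed claim is ``similar'', and the natural repair is to let $\mc{I}^-_\mc{M}(w)$ contain both $\#\neg\vphi$ for $\#\vphi\in\mc{I}_\mc{M}(w)$ and $\#\vphi$ for $\#\neg\vphi\in\mc{I}_\mc{M}(w)$ --- but as written against the stated definition, that one inference is not justified and should not be presented as immediate, especially since the parenthetical half is the one actually needed for the soundness of axiom IVB8.
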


\begin{proof}
For i): For the first part: let $t^\nat = \#\vphi$, take any model $\mc{N}$ and a world $v$ such that $\mc{I}_\mc{N}(v)=X$, and show $v\Vdash^\mc{N}_\mrm{svi}\neg \T \corn{\neg \vphi}$. Consider some model $\mc{N}'$ into which $\mc{N}$ embeds via an EI $f$ and such that $\mrm{Ad}_1 (\mc{N}, \mc{N}', f)$. Since $\#\vphi \in X=\mc{I}_\mc{N}(w)$, then $\#\neg\vphi \notin \mc{I}_\mc{N'}(w')$ for every $w'\geq f(w)$, in order to meet the requirement $\mrm{Ad}_1 (\mc{N}, \mc{N}', f)$. Hence, $f(w)\Vdash_{\mc{N}'} \neg \T\corn{\neg \vphi}$, and the claim follows. The claim in brackets is similar.

ii) and iii) are similar. 
\end{proof}

\noindent Finally, we can easily establish the relation between the four schemes:

\begin{prop}
Let $X\subseteq \nat$. Then $\mc{J}_\mrm{svi}(X)\subseteq \mc{J}_\mrm{vb}(X)\subseteq \mc{J}_\mrm{vc}(X)\subseteq \mc{J}_\mrm{mc}(X)$.
\end{prop}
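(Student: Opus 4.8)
The plan is to observe that the four jump operators share exactly the same shape: $\#\vphi$ belongs to the jump of $X$ iff for every $\mc{M}$ and $w\in W_\mc{M}$ with $\mc{I}_\mc{M}(w)=X$, and every pair $(\mc{M}',f)$ meeting the relevant admissibility requirement, $f(w)\Vdash_{\mc{M}'}\vphi$. They differ only in the condition attached to the embedding $\mc{I}_\mc{M}\leqq_f\mc{I}_{\mc{M}'}$. Hence the whole chain reduces to a single monotonicity principle for admissibility: if condition $A$ implies condition $B$ (as predicates on triples $(\mc{M},\mc{M}',f)$), then the $B$-admissible pairs form a superset of the $A$-admissible ones, so $B$-forcing is the stronger requirement, $B$-forcing implies $A$-forcing, and therefore $\mc{J}_B(X)\subseteq\mc{J}_A(X)$. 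It thus suffices to verify the three implications $\mrm{Ad}_1\Ra(\mc{I}_\mc{M}\leqq_f\mc{I}_{\mc{M}'})$, $\mrm{Ad}_2\Ra\mrm{Ad}_1$, and $\mrm{Ad}_3\Ra\mrm{Ad}_2$. (The labels $\mc{J}_\mrm{vb},\mc{J}_\mrm{vc},\mc{J}_\mrm{mc}$ in the statement are read as the intuitionistic jumps $\mc{J}_\mrm{vbi},\mc{J}_\mrm{vci},\mc{J}_\mrm{mci}$.)

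Two of these implications are immediate. The bare svi-condition is precisely the first conjunct of $\mrm{Ad}_1$, so $\mrm{Ad}_1$ trivially entails it, giving $\mc{J}_\mrm{svi}(X)\subseteq\mc{J}_\mrm{vbi}(X)$. Likewise, $\mrm{Ad}_3$ demands $\mrm{MCX}(\mc{I}_{\mc{M}'}(w'))$ at every $w'\geq f(w)$, and maximal consistency includes the ``not both'' clause, which is exactly the disjointness $\mc{I}_{\mc{M}'}(w')\cap\mc{I}^-_{\mc{M}'}(w')=\varnothing$ required by $\mrm{Ad}_2$; hence $\mrm{Ad}_3\Ra\mrm{Ad}_2$, which yields $\mc{J}_\mrm{vci}(X)\subseteq\mc{J}_\mrm{mci}(X)$.

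The main obstacle is the middle implication $\mrm{Ad}_2\Ra\mrm{Ad}_1$, the only one that is not purely definitional and that genuinely uses the geometry of embeddings. Both conditions carry the same first conjunct $\mc{I}_\mc{M}\leqq_f\mc{I}_{\mc{M}'}$, so I would fix such an embedding, assume $\mrm{Ad}_2$ (that $\mc{I}_{\mc{M}'}(w')\cap\mc{I}^-_{\mc{M}'}(w')=\varnothing$ for all $w'\geq f(w)$), and derive the $\mrm{Ad}_1$-clause $\mc{I}_{\mc{M}'}(w')\cap\mc{I}^-_{\mc{M}}(w)=\varnothing$. Suppose toward a contradiction that some $\#\neg\vphi$ lies in $\mc{I}_{\mc{M}'}(w')\cap\mc{I}^-_{\mc{M}}(w)$. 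Membership in $\mc{I}^-_{\mc{M}}(w)$ means $\#\vphi\in\mc{I}_\mc{M}(w)$; clause 3 of the EI function (Definition \ref{embedding function}) then gives $\#\vphi\in\mc{I}_{\mc{M}'}(f(w))$, and persistence (hereditariness, via $w'\geq f(w)$) propagates this to $\#\vphi\in\mc{I}_{\mc{M}'}(w')$. Thus both $\#\vphi$ and $\#\neg\vphi$ sit in $\mc{I}_{\mc{M}'}(w')$, so $\#\neg\vphi\in\mc{I}_{\mc{M}'}(w')\cap\mc{I}^-_{\mc{M}'}(w')$, contradicting $\mrm{Ad}_2$. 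This establishes $\mrm{Ad}_2\Ra\mrm{Ad}_1$, hence $\mc{J}_\mrm{vbi}(X)\subseteq\mc{J}_\mrm{vci}(X)$, and chaining the three inclusions completes the proof.
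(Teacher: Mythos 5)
Your proof is correct and follows essentially the same route as the paper: both reduce the chain of inclusions to the chain of implications $\mrm{Ad}_3 \Rightarrow \mrm{Ad}_2 \Rightarrow \mrm{Ad}_1 \Rightarrow (\mc{I}_\mc{M}\leqq_f \mc{I}_{\mc{M}'})$ between admissibility conditions, observing that a stronger admissibility condition restricts the universal quantifier to fewer extensions and hence yields a larger jump. The only difference is that the paper states these implications as a bare fact, whereas you actually verify the one non-definitional step, $\mrm{Ad}_2 \Rightarrow \mrm{Ad}_1$, using clause 3 of the EI function together with hereditariness --- a worthwhile addition, and your argument for it is sound.
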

\begin{proof}
Follows by the following fact: if $\mc{M}$ is a structure and $w$ a world in $W_\mc{M}$, then for all structures $\mc{M}'$:

$\mrm{Ad}_3 (\mc{M}, \mc{M}', f)$ implies $\mrm{Ad}_2 (\mc{M}, \mc{M}', f)$

$\mrm{Ad}_2 (\mc{M}, \mc{M}', f)$ implies $\mrm{Ad}_1 (\mc{M}, \mc{M}', f)$

$\mrm{Ad}_1 (\mc{M}, \mc{M}', f)$ implies $\mc{I}_\mc{M}\leqq_f \mc{I}_{\mc{M}'} $
\end{proof}

\section{The axiomatic theory}\label{axiomatics}

In section \ref{semantics}, we introduced fixed-point structures for the scheme SVI. Clearly, this definition is easily applicable to the cases of VBI, VCI, and MCI. Now, the peculiarity of these fixed-point structures is that, just like their classical counterpart, they model supervaluational \textit{axiomatic} theories of truth.

\begin{dfn}\label{ISV}
$\mrm{ISV}$ is the theory extending $\mrm{HAT}$ with the following axioms: 

\begin{description}

\item[$(\mrm{ISV}1)$] $\forall s,t((\T(s\subdot{=}t) \leftrightarrow s^{\circ} = t^{\circ})\wedge (\T(s\subdot{\neq}t) \leftrightarrow s^{\circ} \neq t^{\circ}))$

\item[$(\mrm{ISV}2)$] $\forall x(\mrm{Ax}_{\mrm{HAT}}(x)\rightarrow \T x)$

\item[$(\mrm{ISV}3)$] $\forall x,v (\mrm{Sent}_{\lt}(\forall v x)\ra (\forall t \T x(t/v))\ra \T (\subdot \forall v x))$

\item[$(\mrm{ISV}4)$] $\forall t(\T t^\circ \ra \T\subdot\T t)$

\item[$(\mrm{ISV}5)$] $\forall x,y (\mrm{Sent}_{\lt}(x\subdot\ra y)\ra(\T ( x\subdot \ra y)\rightarrow (\T x\ra \T y )))$

\item[$(\mrm{ISV}6)$] $\forall x(\T\ulcorner \T(\dot{x}) \to \mathrm{Sent}(\dot{x})\urcorner)$

\item[$(\mrm{ISV}7)$] $\T\ulcorner \vphi\urcorner \rightarrow \vphi$ for any formula $\vphi$ of $\lt$. 

\end{description} 
\end{dfn}

\begin{thm}\label{soundness of ISV}
$\mrm{ISV}$ is sound with respect to fixed-point structures of $\mrm{svi}$, i.e., if $\mc{M}$ is a fixed-point structure of $\mrm{svi}$, then $\mc{M}\vDash \mrm{ISV}$. 
\end{thm}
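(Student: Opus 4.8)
The plan is to verify that each axiom of $\mrm{ISV}$ is forced at every world of an arbitrary fixed-point structure $\mc{M}$; since the claim is about all worlds, it suffices to argue at a generic $w\in W_\mc{M}$ (descending to $w'\geq w$ where a conditional or universal must be unfolded). The background theory is handled at once: because every persistent first-order intuitionistic Kripke $\omega$-structure models $\hatt$ (the second Proposition of Section \ref{semantics}), all arithmetical and logical scaffolding is forced throughout $\mc{M}$. This also disposes of $(\mrm{ISV}2)$: if $\bar n$ codes a $\hatt$-axiom, that axiom is forced at every world of every structure, so by Proposition \ref{svi-intersection} its code lies in every fixed point, and since each $\mc{I}_\mc{M}(w)$ is a fixed point we get $w\Vdash\T\bar n$.

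The technical heart is a single \emph{internalization lemma}: if $X=\mc{J}_\mrm{svi}(X)$, $\#\vphi\in X$, and $X\subseteq\mc{I}_\mc{N}(u)$ for some structure $\mc{N}$ and world $u$, then $u\Vdash_\mc{N}\vphi$. I would prove it with the root-adding device from the proof of Proposition \ref{svi-intersection}: truncate $\mc{N}$ at $u$, adjoin a fresh root $r\leq u$ with interpretation $X$ (well-defined, as $X\subseteq\mc{I}_\mc{N}(u)$ respects heredity), note that $\#\vphi\in X=\mc{J}_\mrm{svi}(X)$ makes $r$ svi-force $\vphi$ and hence, via the identity embedding, $r\Vdash\vphi$; then transport this upward to $u$ by heredity and back to $\mc{N}$ by the truncation-invariance Proposition. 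The special case $X=\mc{I}_\mc{N}(u)$ gives the $\T$-elimination principle $\#\vphi\in\mc{I}_\mc{N}(u)\Rightarrow u\Vdash_\mc{N}\vphi$, which immediately yields $(\mrm{ISV}7)$: at any $w'\geq w$, $w'\Vdash\T\corn\vphi$ means $\#\vphi\in\mc{I}_\mc{M}(w')$, whence $w'\Vdash\vphi$. (For the schematic free-variable instances I would pass to numeral substitutions, which the quantifier clauses of Definition \ref{forcing relation} reduce to the closed case.)

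The remaining axioms fall into a uniform pattern. For the ``$\T$-introduction'' axioms $(\mrm{ISV}3)$ and $(\mrm{ISV}5)$ I cannot argue that forcing implies membership, since svi-forcing is strictly stronger than forcing; instead I reduce membership in the fixed point to forcing-at-all-$X$-worlds via Proposition \ref{svi-intersection}, then use the internalization lemma to unfold the truth predicate. For $(\mrm{ISV}5)$, fixing $w'\geq w$ with $\#(\vphi\ra\chi)\in\mc{I}_\mc{M}(w')$ and $w''\geq w'$ with $\#\vphi\in\mc{I}_\mc{M}(w'')$, I show $\#\chi\in\mc{I}_\mc{M}(w'')$ by taking (via Proposition \ref{svi-intersection}) any $\mc{N},u$ with $\mc{I}_\mc{N}(u)=\mc{I}_\mc{M}(w'')$, internalizing both $\#\vphi$ and $\#(\vphi\ra\chi)$ to get $u\Vdash_\mc{N}\vphi$ and $u\Vdash_\mc{N}\vphi\ra\chi$, and concluding $u\Vdash_\mc{N}\chi$ by the forcing clause for $\ra$. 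For $(\mrm{ISV}3)$ the same reduction demands $u'\Vdash_\mc{N}\vphi(\bar n)$ for every $u'\geq u$, which requires the full superset form of the internalization lemma since $\mc{I}_\mc{N}(u')\supseteq\mc{I}_\mc{M}(w')$ may be strict. Axiom $(\mrm{ISV}6)$ needs no fixed-point input beyond Proposition \ref{svi-intersection}, because forcing $\T(\bar n)$ already requires, by clause 3 of Definition \ref{forcing relation}, that $n$ code a sentence, so $\mrm{Sent}(\bar n)$ is then forced arithmetically; $(\mrm{ISV}4)$ follows from the transparency of fixed points (the last Proposition of Section \ref{fixed-points and fixed-point models}); and $(\mrm{ISV}1)$ follows from the observation, proved by the same monotone-jump reasoning, that a fixed point contains the code of an $\lnat$-atomic sentence iff that sentence is true in $\nat$.

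I expect the main obstacle to be precisely the $\T$-introduction steps $(\mrm{ISV}3)$ and $(\mrm{ISV}5)$: the tempting implication $w\Vdash\vphi\Rightarrow\#\vphi\in\mc{I}_\mc{M}(w)$ is \emph{false}, so the entire argument rests on the Proposition \ref{svi-intersection}-plus-internalization manoeuvre. The quantifier case $(\mrm{ISV}3)$ is the most delicate, since one must run the internalization lemma in its superset form at every world $u'$ above $u$, tracking heredity inside the arbitrary auxiliary structures $\mc{N}$ rather than in $\mc{M}$ itself.
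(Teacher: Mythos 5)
Your proposal is correct and follows essentially the same route as the paper's proof: an axiom-by-axiom verification in which the $\T$-introduction axioms $(\mrm{ISV}3)$ and $(\mrm{ISV}5)$ are handled by showing that the relevant code belongs to the fixed point via forcing at all worlds whose truth interpretation is $X$, $(\mrm{ISV}4)$ via transparency, and $(\mrm{ISV}7)$ via the identity embedding. Your ``internalization lemma'' is just the root-adding argument from the proof of Proposition \ref{svi-intersection} extracted as a standalone statement; the paper reaches the same conclusions slightly more directly from the definition of $\mc{J}_\mrm{svi}$ together with hereditariness of the forcing relation (which, in $(\mrm{ISV}3)$, does the work you assign to the superset form of your lemma), but the content is identical.
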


\begin{proof}

Take $\mc{M}$ to be a fixed-point structure of $\mrm{svi}$. Let $w$ be a world in $\mc{M}$, and let $\mc{I}_\mc{M}(w)=X$.

\begin{itemize}
    \item[ISV1] For the first conjunct, assume $s, t$ are closed terms and $\T(s\subdot =t)$ holds. Then, $s\subdot =t\in X$. Now pick a structure $\mc{N}$ and world $v$ such that $\mc{I}_\mc{N}(v)=X$, and we have $v\Vdash_\mrm{svi}^{\mc{N}} s^\circ = t^\circ$. So for any structure $\mc{N}'$ whose interpretation embeds $\mc{I}_\mc{N}$ via a function $f$, $f(v)\Vdash_{\mc{N}'} s^\circ = t^\circ$. Since all structures agree on arithmetical formulae and are $\Delta_0$-sound, $s^\circ = t^\circ$ is a true atomic arithmetical formula. The reverse reasoning yields the other direction. The other conjunct is obtained similarly.
    \item[ISV2] If $\vphi$ is an axiom of HAT, it holds in all worlds. Therefore, given any structure $\mc{N}$ and world $v$ with $I_\mc{N}=X$, and given any EI function $f$ into an interpretation of a structure $\mc{N}'$, the axiom holds in $f(v)$.
    \item[ISV3] Suppose $x$ is the numeral of the code of $\vphi$. The antecedent of ISV3 implies $\#\vphi(\bar n)\in X$ for all $n\in\omega$. Then: 
    
    i) Taking any structure $\mc{N}$ and world $v$ with $I_\mc{N}=X$, and given any EI function $f$ from $I_\mc{N}=X$ to an interpretation of a structure $\mc{N}'$, $f(v)\Vdash_{\mc{N}'}\vphi(\bar n)$ for all $n\in \omega$. 

    ii) By the hereditariness condition, $\vphi(\bar n)$ for all $n\in \omega$ is forced at any world accessible from $f(v)$. 

    iii) Therefore, $f(v)\Vdash_{\mc{N}'}\forall v\vphi$, whence $v\Vdash^{\mc{N}}_\mrm{svi} \forall v\vphi$; and thus, since $v, \mc{N}$ were arbitrary, $\#\forall v\vphi\in X$.
    \item[ISV4] Follows from the fixed-point nature of $X$.
    
    \item[ISV5] Let $m= \# \vphi$, $n= \# \psi$, and assume $m, m\subdot \rightarrow n\in X$. Take any structure $\mc{N}$ and world $v$ with $I_\mc{N}(v)=X$; consider any EI function $f$ from $I_\mc{N}$ to an interpretation of a structure $\mc{N}'$. Since $\mc{I}_{\mc{N}}\leqq_f \mc{I}_{\mc{N}'}$, $f(v)\Vdash_{\mc{N}'} \vphi \wedge (\vphi\rightarrow \psi) $, whence $f(v)\Vdash_{\mc{N}'} \psi$ follows. This establishes $v\Vdash^{\mc{N}}_\mrm{svi}\psi$, and hence $n\in X$.

    \item[ISV6] Follows from the fact that only sentences are forced at worlds. 

    \item[ISV7] The claim is established from the following other claim: 

    \begin{equation*}\label{classical soundness}
    w\Vdash^{\mc{M}}_\mrm{svi}\vphi \Rightarrow w\Vdash_{\mc{M}}\vphi \tag{$\star$}
    \end{equation*}

    This claim is straightforward from the fact that $\mc{I}_\mc{M}\leqq_\mrm{id} \mc{I}_\mc{M}$, where $\mrm{id}$ is the identity function. The main claim now follows: since $\#\vphi\in X=\mc{J}_\mrm{svi}(X)$, and $w,\mc{M}$ are such that $\mc{I}_\mc{M}(w)=X$, we obtain $w\Vdash^{\mc{M}}_\mrm{svi}\vphi $, whence $ w\Vdash_{\mc{M}}\vphi$ by \ref{classical soundness}. 
\end{itemize}
\end{proof}

Here are some facts and theorems of ISV, which essentially mimic those proven by Cantini's theory VF:

\begin{prop}
\begin{enumerate}
    \item[i)]  $\mrm{ISV}\vdash \forall x (\mrm{Pr}_{\mrm{HAT}}(x)\ra \T x)$ 
    \item[ii)] $\mrm{ISV}\vdash \T \corn{\vphi}\vee \T \corn{\neg \vphi}  \ra ((\vphi\ra \T\corn{\psi})\leftrightarrow \T\corn{\vphi\ra \psi})$, for any formulae $\vphi, \psi$ of $\lt$.
    \item[iii)] $\mrm{ISV}\vdash \forall x, y(\mrm{Sent}_{\lt}(x\subdot\wedge y)\ra(\T (x \subdot \wedge y)\leftrightarrow \T x \wedge \T y))$ 
    \item[iv)] $\mrm{ISV}\vdash \forall x,y(\mrm{Sent}_{\lt}(x\subdot \vee y)\ra (\T x \vee \T y \ra \T (x\subdot \vee y)))$
    \item[v)] $\mrm{ISV}\vdash \forall x,v (\mrm{Sent}_{\lt}(\forall v x)\ra (\forall z \T (x (z/v))\leftrightarrow \T(\subdot \forall v x)))$ 
    \item[vi)]  $\mrm{ISV}\vdash \forall x,v (\mrm{Sent}_{\lt}(\exists v x)\ra(\exists z \T (x (z/v))\ra \T(\subdot \exists v x)))$
    \item[vii)] $\mrm{ISV}\vdash \forall x (\mrm{Sent}_{\lt}(x)\ra(\T x\ra  \T (\subdot \neg \subdot \neg x))  $
    \item[viii)] $\mrm{ISV}\vdash  \T \corn{\vphi}\leftrightarrow \T(\subdot \T \corn{\vphi})$ for every sentence $\vphi$ of $\lt$.
    \item[ix)] If $\vphi$ is a sentence of $\lnat$, then $\mrm{ISV}\vdash  \T \corn{\vphi}\leftrightarrow \vphi$
    \item[x)]  $\mrm{ISV}\vdash  \forall x(\T x \wedge \T \subdot \neg x \ra \bot)$
\end{enumerate}
\end{prop}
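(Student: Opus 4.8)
The backbone of the whole proposition is item (i): once we know $\mrm{ISV}\vdash\forall x(\mrm{Pr}_{\mrm{HAT}}(x)\ra\T x)$, every intuitionistically valid implication can be pushed inside the truth predicate. So I would prove (i) first, by formalized induction on the length of a $\mrm{HAT}$-derivation — available because $\mrm{HAT}$ has full $\lt$-induction. The base case (the listed formula is an axiom) is $(\mrm{ISV}2)$; the inductive steps are modus ponens, discharged by the internal $\ra$-elimination $(\mrm{ISV}5)$, and generalization, discharged by $(\mrm{ISV}3)$. The one wrinkle is that derivations may contain open formulas while $\T$ speaks of sentences, so I would reflect universal closures throughout, using that the closure of every theorem is again a theorem.

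With (i) in hand, items (iii)--(vii) all follow by one recipe: take an intuitionistic logical validity, observe via the primitive recursive proof-functions together with (i) that its code lies in $\T$ uniformly, and detach with $(\mrm{ISV}5)$. Thus (iii) uses $\vphi\wedge\psi\ra\vphi$, $\vphi\wedge\psi\ra\psi$ and $\vphi\ra(\psi\ra\vphi\wedge\psi)$; (iv) and (vi) come out one-directional precisely because only $\vphi\ra\vphi\vee\psi$, $\psi\ra\vphi\vee\psi$ and the instantiation $\vphi(t)\ra\exists v\,\vphi$ are valid, not their converses; the missing (backward) direction of (v) is literally $(\mrm{ISV}3)$, its forward half being universal instantiation internalized; and (vii) internalizes $\vphi\ra\neg\neg\vphi$ (its converse being intuitionistically unavailable). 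For (viii), the forward direction is the instance of $(\mrm{ISV}4)$ at $t=\corn\vphi$ and the backward direction the instance of $(\mrm{ISV}7)$ at $\T\corn\vphi$. For (x) I would read $\subdot\neg x$ as $x\subdot\ra\subdot\bot$, use $(\mrm{ISV}6)$ and $(\mrm{ISV}7)$ for the sentence-hood side condition, apply $(\mrm{ISV}5)$ to obtain $\T\subdot\bot$ from $\T x$ and $\T\subdot\neg x$, and close with the $\bot$-instance of $(\mrm{ISV}7)$. Item (ii) is a case analysis on the hypothesis $\T\corn\vphi\vee\T\corn{\neg\vphi}$: right-to-left uses $(\mrm{ISV}5)$ in the $\T\corn\vphi$ case and $(\mrm{ISV}7)$ plus ex falso in the $\T\corn{\neg\vphi}$ case, while left-to-right internalizes $\neg\vphi\ra(\vphi\ra\psi)$ and $\psi\ra(\vphi\ra\psi)$ through (i).

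Item (ix) is the delicate one. Its forward half $\T\corn\vphi\ra\vphi$ is simply $(\mrm{ISV}7)$, so the content is the converse $\vphi\ra\T\corn\vphi$, which I would attack by induction on the arithmetical sentence $\vphi$ (phrased with free variables so the quantifier cases go through). The atomic case is $(\mrm{ISV}1)$ together with $\mrm{HA}$-decidability of primitive recursive relations; conjunction uses (iii); disjunction uses (iv) after an $\vee$-elimination on the assumed $\vphi$; the existential case uses (vi) on a witness; and the universal case uses $(\mrm{ISV}3)$, reflecting the assumed $\forall$ instance by instance.

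The main obstacle is the implication case of (ix), and it is the only place where the supervaluational character really bites. From $\alpha\ra\beta$ and the induction hypothesis for $\beta$ I can only reach $\alpha\ra\T\corn\beta$, and turning this into $\T\corn{\alpha\ra\beta}$ is exactly an internal $\ra$-introduction, which supervaluational truth does not in general validate. The natural route is item (ii): if arithmetical antecedents are internally decided, i.e. $\T\corn\alpha\vee\T\corn{\neg\alpha}$, then (ii) converts $\alpha\ra\T\corn\beta$ into $\T\corn{\alpha\ra\beta}$. I therefore expect the crux to be proving this internal decidedness of arithmetic inside $\mrm{ISV}$: its atomic, conjunction, disjunction and existential cases reduce to items already proved, but the universal case is genuinely intuitionistically delicate (classically it is immediate from arithmetical $\mrm{LEM}$, as in Cantini's $\mrm{VF}$), and handling it carefully — or otherwise circumventing the internal $\ra$-introduction — is where the real work of the proposition lies.
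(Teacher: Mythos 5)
Your reconstruction of items (i)--(viii) and (x) is correct and is essentially the proof the paper has in mind: the paper simply defers to Cantini's Proposition~2.1, adding only the two intuitionistic caveats you also make (that (ii) survives because $\neg p\vee q\vdash p\ra q$ is intuitionistically valid, and that (vii) is one-directional because $\neg\neg p\vdash p$ is not). The only cosmetic divergence is in (x), where the paper closes the argument with $\mrm{ISV}1$ (deriving a false equation from $\T\corn{\bot}$ via internalized ex falso) rather than with the $\bot$-instance of $\mrm{ISV}7$ as you do; both are fine.

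On (ix) your diagnosis is not a defect of your write-up but a genuine objection to the paper's proof-by-citation. Cantini's argument for the arithmetical $\T$-sentences runs through internal decidedness, $\T\corn{\alpha}\vee\T\corn{\neg\alpha}$ for arithmetical $\alpha$, and the induction establishing that decidedness uses an essentially classical case split at the universal quantifier (either $\forall n\,\T\corn{\alpha(\dot n)}$, whence $\mrm{ISV}3$ applies, or some instance fails and one internalizes $\neg\alpha(\bar n)\ra\neg\forall v\,\alpha$); over $\mrm{HAT}$ that disjunction of cases is not available, and one should not expect $\mrm{ISV}$, an intuitionistic theory, to prove the disjunction $\T\corn{\alpha}\vee\T\corn{\neg\alpha}$ for an undecided $\alpha$ (granting the disjunction property, $\mrm{ISV}7$ would then decide $\alpha$ itself). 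Without that disjunction, your implication case is stuck exactly where you say it is: $\T\corn{\alpha}\ra\T\corn{\beta}$ is obtainable from the induction hypotheses, but converting it into $\T\corn{\alpha\ra\beta}$ is an internal $\ra$-introduction that $\mrm{ISV}$ does not license. The schema is at least semantically sound --- arithmetical forcing in these $\omega$-structures is world-independent and coincides with classical truth, so every fixed point of $\mc{J}_\mrm{svi}$ contains every true $\lnat$-sentence --- so (ix) is not refuted; but ``exactly like Cantini'' does not yield a derivation in $\mrm{ISV}$ of the full schema, and either a restriction of (ix) (e.g.\ to $\Sigma^0_1$/$\Pi^0_1$ sentences via provable $\Sigma_1$-completeness and $\mrm{ISV}3$, or to formulas $\mrm{HA}$ proves decidable) or a genuinely new argument is needed. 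You have located the real gap; the paper does not close it either.
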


\begin{proof}
The proof of items i-ix is exactly like that of \cite[Prop. 2.1]{cantini_1990}. Note that ii) goes through because the inference $\neg p \vee q \vdash p\ra q$ is valid in IL. Also, note that in vii), the other direction does not hold because the inference $\neg \neg p\vdash p$ is not valid in IL. 

For x): follows directly from ISV1, ISV2, ISV5. 
\end{proof}

\noindent We can also present axiomatizations for the fixed-points of the schemes vci and mci. 

\begin{dfn}
    The theory $\mrm{IVB}$ consists of axioms ISV1-ISV7 plus the axiom:

  \begin{enumerate}
        \item[$\mrm{(IVB8)}$] $\mrm{ISV}\vdash \forall x(\T\subdot \neg x \ra \T\subdot \neg \subdot \T x)$
    \end{enumerate}
    
    \noindent The theory $\mrm{IVF}$ consists of axioms ISV1-ISV7 and IVB8 plus the axiom: 

    \begin{enumerate}
        \item[$\mrm{(IVF9)}$] $\mrm{ISV}\vdash \forall x(\T\corn{\T \dot x \wedge \T \subdot \neg \dot x \ra \bot})$
    \end{enumerate}

   \noindent The theory $\mrm{IMC}$ consists of axioms ISV1-ISV6 and IVB8 plus the axiom: 

    \begin{enumerate}
        \item[$\mrm{(IMC8)}$] $\mrm{ISV}\vdash \forall x(\T\corn{\T (\subdot \neg \dot x) \lra (\T \dot x\ra \bot)})$
    \end{enumerate}

\end{dfn}

\noindent The theory IVF is the intuitionistic counterpart of Cantini's theory VF from \cite{cantini_1990}.Cantini develops that theory as a sound axiomatization of the scheme vc over classical logic. For their part, the theories IVB, IVF and IMC are sound with respect to fixed-points of VBI, VCI and MCI, respectively. The classical counterparts of IVB and IMC are the theories $\mrm{VF}^-$ and $\mrm{VFM}$, introduced in \cite{dopico_hayashi_2024}.

\begin{prop}
\begin{enumerate}
    \item[i)] $\mrm{IVB}$ is sound with respect to fixed-point structures of $\mrm{VBI}$, i.e., if $\mc{M}$ is a fixed-point structure of $\mrm{VBI}$, then $\mc{M}\vDash \mrm{IVB}$.
    \item[ii)] $\mrm{IVF}$ is sound with respect to fixed-point structures of $\mrm{VCI}$, i.e., if $\mc{M}$ is a fixed-point structure of $\mrm{VCI}$, then $\mc{M}\vDash \mrm{IVF}$.
    \item[iii)] $\mrm{IMC}$ is sound with respect to fixed-point structures of $\mrm{MCI}$, i.e., if $\mc{M}$ is a fixed-point structure of $\mrm{MCI}$, then $\mc{M}\vDash \mrm{IMC}$.
\end{enumerate}
\end{prop}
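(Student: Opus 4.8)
The plan is to lean on the machinery already in place, reusing the proof of Theorem~\ref{soundness of ISV} as much as possible and isolating the genuinely new work in the extra axioms IVB8, IVF9 and IMC8. The argument is organized around two observations. First, the comparison of the four schemes (the last result of Section~\ref{semantics}) shows that each of $\mrm{Ad}_3(\mc{M},\mc{M}',f)$, $\mrm{Ad}_2(\mc{M},\mc{M}',f)$ and $\mrm{Ad}_1(\mc{M},\mc{M}',f)$ entails the bare embedding requirement $\mc{I}_\mc{M}\leqq_f\mc{I}_{\mc{M}'}$. Hence every case of Theorem~\ref{soundness of ISV} that appeals only to the clauses of an EI function transcribes verbatim once $\Vdash_\mrm{svi}$ is replaced by $\Vdash_\bullet$ for $\bullet\in\{\mrm{vbi},\mrm{vci},\mrm{mci}\}$. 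Second, for $X$ a fixed point of the corresponding jump $\mc{J}_\bullet$, the fixed-point equation converts membership $\#\vphi\in X$ into ``$\vphi$ is $\bullet$-forced at the image of any admissibly embedded world whose interpretation is $X$'', which is exactly what the ISV-proofs consume. Consequently ISV1--ISV6 hold in fixed-point structures of each scheme by a line-by-line copy of the relevant cases of Theorem~\ref{soundness of ISV}, with no consistency hypothesis required.

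The axiom ISV7, present in IVB and IVF but not in IMC, needs slightly more care. As in the original proof I would reduce it to the star-clause $(\star)$, $w\Vdash_\bullet\vphi\Rightarrow w\Vdash_\mc{M}\vphi$, obtained by taking the identity embedding. The delicate point is that $(\star)$ now demands that the identity be \emph{admissible} for the scheme, i.e.\ $\mrm{Ad}_1(\mc{M},\mc{M},\mrm{id})$ (resp.\ $\mrm{Ad}_2$), which unwinds to $\mc{I}_\mc{M}(w')\cap\mc{I}^-_\mc{M}(w)=\varnothing$ for every $w'\geq w$. I would discharge this from consistency of the fixed points: were some fixed point to contain both $\#\vphi$ and $\#\neg\vphi$, the fixed-point equation would force $\vphi$ and $\neg\vphi$ at one world under an admissible embedding, which is impossible as soon as one admissible embedding exists; and the existence of such an embedding is the non-degeneracy argument of Proposition~\ref{monotonicity svi operator}. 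Persistence then upgrades consistency at $w'$ to the disjointness with $\mc{I}^-_\mc{M}(w)$ that $\mrm{Ad}_1$ (resp.\ $\mrm{Ad}_2$) requires.

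The real content is the extra axioms, and here I would appeal directly to Proposition~\ref{alternative schemes}. Once the object-language quantifier is instantiated and the syntactic guard dealt with, each new axiom is precisely the internalized feature whose membership in the fixed point is asserted there: IVB8 matches clause~i) (from $\#\neg\psi\in X$ infer $\#\neg\T\corn{\psi}\in X$), IVF9 matches clause~ii) ($\#\neg(\T t\wedge\T\subdot\neg t)\in X$), and IMC8 matches clause~iii) ($\#(\neg\T t\lra\T\subdot\neg t)\in X$); recall that $\neg A$ abbreviates $A\ra\bot$, so the conditional and biconditional shapes line up. For IMC, which also contains IVB8, I would observe that clause~i) must be re-established for $\mc{J}_\mrm{mci}$-fixed points rather than $\mc{J}_\mrm{vbi}$-ones: this is immediate by rerunning the proof of Proposition~\ref{alternative schemes}.i with $\mrm{Ad}_3$ in place of $\mrm{Ad}_1$, since maximal consistency at every $w'\geq f(w)$ forbids $\#\psi$ whenever $\#\neg\psi$ is present, yielding $f(w)\Vdash\neg\T\corn{\psi}$.

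The step I expect to be the main obstacle is the bookkeeping of the \emph{upward-looking} admissibility conditions. Because forcing for $\ra$ and $\neg$ is global, the features underwriting IVB8, IVF9 and IMC8 are preserved only if the disjointness, consistency, or maximality clause is imposed at \emph{every} world above $f(w)$, not merely at $f(w)$; checking that the quantifier ``$\forall w'\geq f(w)$'' in each $\mrm{Ad}_i$ matches exactly the worlds that the negation- and conditional-clauses inspect is where the argument must be run with care, and it is also what makes the identity-admissibility argument for $(\star)$ go through. Everything else is a faithful transcription of Theorem~\ref{soundness of ISV} and Proposition~\ref{alternative schemes}.
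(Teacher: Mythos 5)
Your proposal is correct and follows essentially the same route as the paper, whose proof simply notes that the only difference from $\mrm{ISV}$ is the extra axiom in each case and cites Proposition~\ref{alternative schemes}.i--iii. In fact you supply details the paper leaves implicit --- that the admissibility conditions entail the bare embedding so the $\mrm{ISV}1$--$\mrm{ISV}6$ cases transfer, that $(\star)$ for $\mrm{ISV}7$ now needs the identity embedding to be \emph{admissible} (hence consistency of the fixed point), and that $\mrm{IVB}8$ must be re-derived for $\mc{J}_\mrm{mci}$-fixed points --- all of which are accurate refinements of the same argument.
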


\begin{proof}
For i): the only difference with ISV is axiom IVF8, which follows from Proposition \ref{alternative schemes}.i.

For ii): the only difference with ISV is IVF9, which follows from Proposition \ref{alternative schemes}.ii.

For iii): same with Proposition \ref{alternative schemes}.iii. 
\end{proof}

We close this section by noting the lower-bound of the proof-theoretic strength of ISV. The result is due to Leigh in \cite{leigh_2013}, who shows that a theory very much in the style of ISV (in fact, a super-theory thereof) is proof-theoretically equivalent to $\mrm{ID}^i_1$, the intuitionistic version of the theory of one inductive definition. For details on the latter, we refer the reader to \cite[\S 4]{leigh_2013}. Thus, by inspecting that proof, we see that all resources are available in ISV, and therefore:

\begin{prop}\label{proof theory ISV}
$\vert \mrm{ISV}\vert \equiv \vert \mrm{ID}^i_1\vert$
\end{prop}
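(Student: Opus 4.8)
The plan is to establish the proof-theoretic equivalence $\vert \mrm{ISV}\vert \equiv \vert \mrm{ID}^i_1\vert$ by arguing both inequalities separately. The upper bound $\vert \mrm{ISV}\vert \leqq \vert \mrm{ID}^i_1\vert$ should be the more straightforward direction: the semantic fixed-point construction carried out in section \ref{semantics} produces fixed-points of the monotone operator $\mc{J}_\mrm{svi}$, and such least fixed-points of an arithmetically definable positive operator are exactly the objects that $\mrm{ID}^i_1$ is designed to formalize. The idea is to verify that the Kripke-jump $\mc{J}_\mrm{svi}$ is given by an $\lt$-formula that is (provably in $\mrm{ID}^i_1$) monotone, so that $\mrm{ID}^i_1$ proves the existence of a fixed-point $I$ satisfying $I = \mc{J}_\mrm{svi}(I)$, and then to check that this fixed-point validates each of the axioms $\mrm{ISV}1$--$\mrm{ISV}7$ under the interpretation $\T x :\equiv x \in I$. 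Soundness of this interpretation reduces, axiom by axiom, to the semantic verifications already given in the proof of Theorem \ref{soundness of ISV}, now formalized internally in $\mrm{ID}^i_1$.

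The lower bound $\vert \mrm{ISV}\vert \geqq \vert \mrm{ID}^i_1\vert$ is where the reference to Leigh's \cite{leigh_2013} does the real work, and my plan is to lean on it directly rather than reprove it. Leigh establishes that a theory in the style of $\mrm{ISV}$---indeed, by the excerpt's own remark, a \emph{super}-theory of $\mrm{ISV}$---is proof-theoretically equivalent to $\mrm{ID}^i_1$; in particular his lower-bound argument interprets $\mrm{ID}^i_1$ into that truth theory. The strategy is therefore to inspect Leigh's interpretation and confirm that every principle it invokes is already available among the axioms $\mrm{ISV}1$--$\mrm{ISV}7$. Concretely, one expects the interpretation to simulate the inductive definition of the least fixed-point using the transparency and compositionality of $\T$: the truth predicate, being transparent (Proposition \ref{transparent models} and its corollary) and satisfying the compositional clauses recorded in the proposition following Theorem \ref{soundness of ISV}---namely $\mrm{ISV}$ proves internal provability-soundness $\forall x(\mrm{Pr}_{\mrm{HAT}}(x) \ra \T x)$, conjunction-, universal-, and existential-distribution, and the iteration principle $\mrm{ISV}4$---supplies exactly the closure and induction resources needed to define and reason about the stages of an inductive definition inside the theory.

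The main obstacle, and the step deserving the most care, is checking that the \emph{intuitionistic} setting does not silently import classical reasoning into either direction. On the lower-bound side, Leigh's argument must be examined to ensure it uses only intuitionistically valid inferences and only those compositional principles that $\mrm{ISV}$ actually has---recalling the cautionary notes in the proposition after Theorem \ref{soundness of ISV} that, e.g., double-negation elimination $\neg\neg p \vdash p$ fails in $\mathrm{IL}$ and that the conditional principle ii) depends on the intuitionistically valid $\neg p \vee q \vdash p \ra q$. In particular, the existential clause $\mrm{ISV}$ proves only in the direction $\exists z\,\T(x(z/v)) \ra \T(\subdot\exists v\,x)$ and not its converse, so the interpretation must be arranged so as never to require the missing converse distribution. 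On the upper-bound side, the formalization of $\mc{J}_\mrm{svi}$ quantifies over all structures $\mc{M}'$ and embedding functions $f$, which is prima facie a second-order quantification; the task is to confirm that, for the purpose of the fixed-point interpretation, this can be replaced by an arithmetically positive operator form (as the transparency results suggest, since fixed-points are determined world-independently), so that $\mrm{ID}^i_1$ genuinely captures it. Once both the positivity of the operator and the intuitionistic adequacy of Leigh's interpretation are confirmed, the equivalence follows by combining the two bounds.
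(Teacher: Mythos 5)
Your lower-bound argument coincides with the paper's: the paper obtains $\vert\mrm{ISV}\vert\geqq\vert\mrm{ID}^i_1\vert$ precisely by inspecting Leigh's derivation of the $\lnat$-theorems of $\mrm{ID}^i_1$ inside his theory $\mrm{H}^i$ and observing that every resource used there is already available in $\mrm{ISV}$. That part of your plan is fine.

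The upper bound is where your route diverges from the paper's, and where there is a genuine gap. You propose to formalize the semantic construction of section \ref{semantics} inside $\mrm{ID}^i_1$, treating $\mc{J}_\mrm{svi}$ as an arithmetically definable positive operator whose least fixed point $\mrm{ID}^i_1$ can produce and then verify the axioms in. But $\mc{J}_\mrm{svi}$ is nothing of the sort: by definition it quantifies over \emph{all} persistent first-order intuitionistic Kripke $\omega$-structures, all of their worlds, and all embedding-interpretation functions between structures. These are arbitrary set-sized objects, so the defining condition of $\#\vphi\in\mc{J}_\mrm{svi}(X)$ is not arithmetical in $X$ in any direct reading---it is not even second-order over $\omega$ without further work---and $\mrm{ID}^i_1$ only furnishes fixed points for operators given by arithmetically positive formulae. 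You flag this as a ``task to confirm,'' but the transparency results do not deliver such a reduction (they concern the behaviour of fixed points once given, not the definability of the jump), and this is exactly the step that would fail; the analogous classical fact is that the upper bound for Cantini's $\mrm{VF}$ is not obtained by formalizing the supervaluational model construction in $\mrm{ID}_1$. The paper's actual upper-bound argument sidesteps all of this: since $\mrm{ISV}$ is a subtheory of Leigh's $\mrm{H}^i$, and Leigh interprets $\mrm{H}^i$ in $\mrm{KP}^i_\omega$, which is proof-theoretically equivalent to $\mrm{ID}^i_1$, the bound $\vert\mrm{ISV}\vert\leqq\vert\mrm{ID}^i_1\vert$ is inherited for free. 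You should replace your upper-bound strategy with this subtheory argument.
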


\section{The fixed-frame approach}\label{fixed-frame approach}

As repeatedly stated, our supervaluational-forcing relation $w\Vdash^\mc{M}_\mrm{svi}\vphi$ ranges over structures and embedding functions that might contain more worlds and enlarged accessibility relations than the original Kripke model ($\mc{M}$) for whose world ($w$) the relation is defined. Likewise, the Kripke jump for a set $X\subseteq \mrm{SENT}_{\lt}$ ranges over all structures which possess a world with truth interpretation $X$, offering a global sense of what supervaluationally follows, in intuitionistic logic, from a given set of sentences. This strategy, however, is to be contrasted with the approach that some papers have adopted when building fixed-point constructions over possible-world semantics---e.g, \cite{halbach_leitgeb_welch_2003}, \cite{halbach_welch_2009}, \cite{nicolai_2018a}, \cite{stern_2025}. Although, with the exception of Stern, the rest of authors are primarily concerned with a fixed-point construction for a necessity predicate, the framework applies equally well for truth. The goal of this section is to characterize this approach, that we shall call the \textit{fixed-frame} approach, and compare it to the one developed in section \ref{semantics}.

Consider a persistent first-order intuitionistic Kripke $\omega$-structure $\mc{M}:=\langle W_\mc{M}, \leq_\mc{M}, \mc{D}_\mc{M}, \mc{I}_\mc{M}\rangle$. The fixed-frame approach, as its name makes clear, ``fixes'' the frame (i.e., the worlds and the accessibility relation), and considers only interpretation extensions that keep the same set of worlds as their domain. That is, the extensions of $\mc{I}_\mc{M}$ considered are interpretation relations $\mc{I}'_\mc{M}\subseteq W_\mc{M}\times \mrm{SENT}_{\lt}$ with $\mc{I}_\mc{M}\subseteq \mc{I}'_\mc{M}$. No additional worlds, and hence no enlarged accessibility relation, is allowed. There is also no reference to embeddings, so interpretations for isomorphic structures with different labels for the worlds are left aside. Let us write $\mc{I}_\mc{M}\preceq \mc{I}'_\mc{M}$ when $\mc{I}'_\mc{M}$ is an extension of $\mc{I}_\mc{M}$ thus understood. Since we are keeping the frame the same, let us write $w, \mc{I}_\mc{M}\Vdash\vphi$ or $w, \mc{I}'_\mc{M}\Vdash\vphi$ to indicate that the frame is fixed, and the interpretation relation in question is $\mc{I}_\mc{M}$, or $\mc{I}'_\mc{M}$, respectively---instead of providing a new structure for each new interpretation. We maintain this notation for the rest of the section, even when the frame varies. For the world $w\in W_\mc{M}$, $w$ and $\mc{I}_\mc{M}$ \textit{ff-forcing} the $\lt$-sentence $\vphi$ is then defined as
\begin{equation*}
    w, \mc{I}_\mc{M}\Vdash_\mrm{ff}\vphi :\Lra \forall\, \mc{I}'_\mc{M}\succeq \mc{I}_\mc{M}(w, \mc{I}'_\mc{M}\Vdash \vphi)
\end{equation*}
\noindent Moreover, the Kripke jump is defined on interpretation relations, not on sets of sentences.\footnote{To be wholly faithful to the original approaches, authors tend to conceive of the interpretation for truth $\mc{I}_\mc{M}$ for a model $\mc{M}$ as a \textit{function} from $W_\mc{M}$ to $\mc{P}(\mrm{SENT}_{\lt})$, rather than a subset of $W_\mc{M}\times \mrm{SENT}_{\lt}$.} The idea is that, when applied to an evaluation function $\mc{I}_\mc{M}:W_\mc{M}\times \mrm{SENT}_{\lt}$, the jump $\mc{J}^\mc{M}_\mrm{ff}$ yields another evaluation function $\mc{I}'_\mc{M}:W_\mc{M}\times \mrm{SENT}_{\lt}$ such that, for each $w\in W_\mc{M}$: 
\begin{equation*}
    \mc{I}'_\mc{M} (w) = \mc{J}^\mc{M}_\mrm{ff}(\mc{I}_\mc{M}) (w) = \{ \#\vphi \sth w, \mc{I}_\mc{M}\Vdash_\mrm{ff}\vphi\}
\end{equation*}

\noindent This jump is monotonic, and hence will yield fixed-points, including a minimal one (see, e.g., \cite[\S 4.2, \S 5]{nicolai_2018a}). 

So how does the approach outlined in the previous sections compare to the fixed-frame approach? Two features of the fixed-frame approach stand out. On the one hand, as mentioned, this approach is tied to a particular structure, with a fixed frame. Additionally, the fixed-frame approach is based on interpretation relations: it yields, as fixed-points, an interpretation relation for the frame of the model $\mc{M}$ over which the construction is carried out. By contrast, our fixed-points are not tied to a particular structure, but rather, have been obtained by supervaluating over every possible structure. This picture is, arguably, more faithful to the spirit of classical supervaluationism, where one really aims to range over \textit{all} models possessing some features. Moreover, the construction renders sets of sentences as fixed-points: this is, on the one hand, more in line with the classical case; and, on the other, seems advantageous when it comes to discerning `what is true' intuitionistically, irrespective of the underlying structure. On top of that, but related to this last point, the sentence-based jump (as opposed to the interpretation-based fixed-frame jump) might allow for a better and more homogeneous philosophical reading of the truth construction propounded by Kripke. For instance, it is often claimed that the minimal fixed-point of Kripke's constructions enjoys a privileged position.\footnote{See \cite{kremer_1988} for an in-depth discussion. Note that this is not a claim made by Kripke in \cite{kripke_1975}.} Our approach can preserve this idea: in starting with the empty set, $\varnothing$, one obtains a minimal fixed-point of $\mc{J}_\mrm{svi}$ standing for the least set of sentences that is supervaluationally true under intuitionistic logic. The fixed-frame approach, on the contrary, has no `overall' least fixed-point. Instead, it requires that a frame be given in advance---only then can the least fixed-point be obtained, which is nonetheless only a least fixed-point relative to the frame. 

At the same time, the fixed-frame approach enjoys a great advantage with respect to the construction we have introduced: transparent models for the truth predicate can be given for any frame whatsoever. Indeed, it can be checked that if $\mc{M}:=\langle W_\mc{M}, \leq_\mc{M}, \mc{D}_\mc{M}, \mc{I}_\mc{M}\rangle $ is a persistent first-order intuitionistic Kripke $\omega$-structure, with $\mc{J}^\mc{M}_\mrm{ff}(\mc{I}_\mc{M})=\mc{I}_\mc{M}$, 
\begin{equation*}
    w, \mc{I}_\mc{M}\Vdash_\mrm{ff}\vphi \Lra w, \mc{I}_\mc{M}\Vdash_\mrm{ff}\T\corn\vphi
\end{equation*}
\noindent holds for all $w\in W_\mc{M}$ and formulae $\vphi$.

Playing around with the admissibility conditions of the SVI scheme, however, we can establish connections between fixed-points of our approach and fixed-points of the fixed-frame approach. For instance, fix a frame $\langle W_\mc{M}, \leq_\mc{M}, \mc{D}_\mc{M}\rangle$. Now, take a persistent first-order intuitionistic Kripke $\omega$-structure $\mc{N}:=\langle W_\mc{N}, \leq_\mc{N}, \mc{D}_\mc{N}, \mc{I}_\mc{N}\rangle$, and define:
\begin{multline*}
    w, \mc{I}_\mc{N}\Vdash^\mc{N}_{\mrm{svi}_\mc{M}}\vphi:\Lra \forall \mc{N}', f(\mc{I}_\mc{N}\leqq_f\mc{I}_{\mc{N}'} \& \leq_\mc{N'}\cong_\tau \leq_\mc{M} \& \forall u\in W_\mc{N'} (\mc{I}_\mc{N'}(u)\supseteq \mc{I}_\mc{M}(\tau(u)))\\
    \Ra f(w), \mc{I}_\mc{N'}\Vdash_\mc{N'} \vphi)
\end{multline*}
\sloppy Here, $\leq_\mc{N'}\cong_\tau \leq_\mc{M}$ indicates that $\tau$ is an isomorphism between the structure $(W_\mc{N}', \leq_\mc{N'})$ and $(W_\mc{M}, \leq_\mc{M})$. Despite the excessive symbolism of the formula, the meaning is immediate: in supervaluationally forcing over $w$, we only consider interpretation extensions which are isomorphic to $\mc{M}$ and whose interpretations of truth at every world include at least all the sentences assigned to the corresponding world in $\mc{M}$. In doing so, we are `freezing' the frame (up to isomorphism) in a way similar to what the fixed-frame approach does. It is easy to see that, when $\mc{N}$ is $\mc{M}$ itself, svi$_\mc{M}$-forcing is equivalent to ff-forcing:
\begin{prop}\label{equivalence forcing same structure}
Let $\mc{M}:=\langle W_\mc{M}, \leq_\mc{M}, \mc{D}_\mc{M}, \mc{I}_\mc{M}\rangle$ be a persistent, first-order intuitionistic Kripke $\omega$-structure. Then:
\begin{equation*}
    w, \mc{I}_\mc{M}\Vdash^\mc{M}_{\mrm{svi}_\mc{M}}\vphi\Lra w, \mc{I}_\mc{M}\Vdash_\mrm{ff}\vphi
\end{equation*}
\end{prop}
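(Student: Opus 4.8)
The plan is to prove the two implications separately, translating in each case between a fixed-frame extension $\mc{I}'_\mc{M}\succeq\mc{I}_\mc{M}$ (same worlds, more sentences) and an admissible triple $(\mc{M}',f,\tau)$ for the $\mrm{svi}_\mc{M}$ scheme. The one technical tool I would isolate first is the \emph{isomorphism-invariance} of the forcing relation of Definition~\ref{forcing relation}: if $h$ is a bijection between the worlds of two structures that preserves $\leq$ in both directions and satisfies $\mc{I}_{\mc{A}}(v)=\mc{I}_{\mc{B}}(h(v))$ for every world $v$, then $v\Vdash_{\mc{A}}\psi\Lra h(v)\Vdash_{\mc{B}}\psi$ for every $\lt$-formula $\psi$. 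This is a routine induction on $\psi$: clauses (1)--(2) are absolute to $\nat$ and world-independent, clause (3) is immediate from $\mc{I}_{\mc{A}}(v)=\mc{I}_{\mc{B}}(h(v))$, and the connective and quantifier cases follow from the induction hypothesis, using that $h$ preserves $\leq$ in both directions for the upward-looking clauses (6) and (8).

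For the direction $w,\mc{I}_\mc{M}\Vdash^\mc{M}_{\mrm{svi}_\mc{M}}\vphi\Ra w,\mc{I}_\mc{M}\Vdash_\mrm{ff}\vphi$, I would simply instantiate the universal quantifier in the definition of $\mrm{svi}_\mc{M}$-forcing at the identity maps. Given any extension $\mc{I}'_\mc{M}\succeq\mc{I}_\mc{M}$, let $\mc{M}'$ be $\langle W_\mc{M},\leq_\mc{M},\mc{D}_\mc{M},\mc{I}'_\mc{M}\rangle$ and take $f=\tau=\mrm{id}$. The three admissibility clauses hold at once: $\mrm{id}$ is an EI function because $\mc{I}_\mc{M}\subseteq\mc{I}'_\mc{M}$ and the order is unchanged; $\mrm{id}$ is an order isomorphism of $\leq_\mc{M}$ with itself; and the domination clause $\mc{I}'_\mc{M}(u)\supseteq\mc{I}_\mc{M}(u)$ is again $\mc{I}_\mc{M}\subseteq\mc{I}'_\mc{M}$. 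Hence $\mrm{svi}_\mc{M}$-forcing yields $\mrm{id}(w)\Vdash_{\mc{M}'}\vphi$, i.e. $w,\mc{I}'_\mc{M}\Vdash\vphi$; as the extension was arbitrary, ff-forcing follows.

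For the converse, I would take an arbitrary admissible triple $(\mc{M}',f,\tau)$ and pull the interpretation of $\mc{M}'$ back onto the fixed frame of $\mc{M}$. Concretely, set $\mc{I}^\dagger(v):=\mc{I}_{\mc{M}'}(f(v))$ for $v\in W_\mc{M}$. The EI clauses for $f$ (Definition~\ref{embedding function}) give $\mc{I}_\mc{M}(v)\subseteq\mc{I}^\dagger(v)$ from clause~3, and hereditariness of $\mc{I}^\dagger$ from clause~2 together with hereditariness of $\mc{M}'$, so $\mc{I}^\dagger\succeq\mc{I}_\mc{M}$ is a legitimate fixed-frame extension. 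Applying the hypothesis gives $w\Vdash_{\langle W_\mc{M},\leq_\mc{M},\mc{D}_\mc{M},\mc{I}^\dagger\rangle}\vphi$, and transporting along $f$ via the isomorphism lemma delivers $f(w)\Vdash_{\mc{M}'}\vphi$, which is the required instance of $\mrm{svi}_\mc{M}$-forcing.

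The step that needs care, and which I expect to be the main obstacle, is precisely this final transport: the isomorphism lemma requires $f$ to be \emph{surjective} onto $W_{\mc{M}'}$ (a full isomorphism), since clauses (6) and (8) for $\to$ and $\forall$ inspect \emph{all} $\leq_{\mc{M}'}$-successors of $f(w)$, not merely those in the $f$-image. This is exactly where the isomorphism $\tau$ earns its place in the admissibility condition: as $\tau$ is a bijection we have $|W_{\mc{M}'}|=|W_\mc{M}|$, so an injective order-embedding $f$ between frames of the same (finite) cardinality is automatically onto, whence $f$ is the needed isomorphism and $\mc{I}^\dagger(v)=\mc{I}_{\mc{M}'}(f(v))$ makes the transport immediate. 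For possibly infinite frames surjectivity cannot be read off cardinality alone; there I would instead pull back along $\tau^{-1}$, which a direct check against clauses~1--3 of Definition~\ref{embedding function} shows is itself an EI function and which is a genuine isomorphism because $\tau$ is, obtaining $\tau^{-1}(w)\Vdash_{\mc{M}'}\vphi$, and then match $\tau^{-1}(w)$ with $f(w)$ using that both $f$ and $\tau^{-1}$ are $\mc{I}_\mc{M}$-dominating order embeddings of the same frame. Verifying that these two witnesses behave identically at $w$ is the only genuinely delicate point; the rest is bookkeeping with the forcing clauses.
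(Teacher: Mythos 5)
Your left-to-right direction is exactly the paper's argument (instantiate the admissibility data at the identity), and you have correctly located where all the difficulty of the converse lies. But the step you postpone --- ``verifying that these two witnesses behave identically at $w$'' --- is not bookkeeping; it is the entire content of the right-to-left direction, and under the letter of the definition of $\Vdash^{\mc{M}}_{\mrm{svi}_\mc{M}}$ it can fail. Nothing in the admissibility condition ties $\tau$ to $f$: $\tau$ is merely required to be \emph{some} frame isomorphism, so $\tau(f(w))$ need not equal $w$, $f$ need not be surjective, and $f(w)$ and $\tau^{-1}(w)$ can have non-isomorphic upward cones in $\mc{M}'$. Concretely, let $W_\mc{M}$ be a disjoint union containing, for each $n$, infinitely many $(n{+}1)$-element chains, let $w$ be the sole point of a one-element chain, and let $\mc{I}_\mc{M}$ be empty everywhere. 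Taking $\mc{M}'$ on the same frame with $\tau=\mrm{id}$, one can build an injective order-embedding $f$ that sends $w$ to the bottom of a two-element chain $C$ while shifting the remaining chains so that $C$ is otherwise missed; putting $\#\lambda$ into the interpretation of the top of $C$ only, every admissibility clause holds, yet $f(w)\nVdash_{\mc{M}'}\T\corn{\lambda}\vee\neg\T\corn{\lambda}$, whereas every fixed-frame extension forces that disjunction at the maximal world $w$. So neither your $f$-pullback (blocked: $f$ is not onto the cone of $f(w)$) nor your $\tau^{-1}$-pullback (which lands the conclusion at $\tau^{-1}(w)=w\neq f(w)$) can close the argument in general, and the two witnesses genuinely disagree.

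To be fair, the paper's own two-line proof has the same lacuna: ``re-label the worlds according to the bijective function'' transports the putative counterexample from $f(w)$ to $\tau(f(w))$, which is a counterexample to $w,\mc{I}_\mc{M}\Vdash_\mrm{ff}\vphi$ only if $\tau(f(w))=w$. Your finite-frame repair is correct and complete for that case: an injective order-embedding between equinumerous finite frames is onto, so one pulls back along $f$ itself and applies your isomorphism-invariance lemma. For the general case the proposition needs the admissibility condition to be read (or amended) so that the isomorphism is compatible with the embedding --- e.g.\ requiring $\tau\circ f=\mrm{id}_{W_\mc{M}}$, hence $f$ surjective --- after which your pullback $\mc{I}^{\dagger}(v):=\mc{I}_{\mc{M}'}(f(v))$ goes through verbatim. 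As written, the ``delicate point'' you defer is a genuine gap, not a formality.
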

\begin{proof}
The left-to-right direction is immediate, since any ff-extension $\mc{I}'_\mc{M}$ meets the admissibility condition of $\Vdash^\mc{M}_{\mrm{svi}_\mc{M}}$. The right-to-left direction is also straightforward: if there is a structure $\mc{N}$ that is isomorphic to $\mc{M}$ and which serves as a counterexample to $w, \mc{I}_\mc{M}\Vdash^\mc{M}_{\mrm{svi}_\mc{M}}\vphi$, it suffices to re-label the worlds in $\mc{N}$ according to the bijective function so that these match the worlds in $\mc{M}$, and this will be a counterexample to $w, \mc{I}_\mc{M}\Vdash_\mrm{ff}\vphi$.
\end{proof}
\noindent We then write:
\begin{equation*}
    \mc{J}^\mc{M}_\mrm{svi}(X):=\{\# \vphi \sth \forall \mc{M}', \forall w\in W_\mc{M'}(\mc{I}_{\mc{M}'} (w)=X \Rightarrow w, \mc{I}_\mc{M'}\Vdash^{\mc{M}'}_{\mrm{svi}_\mc{M}}\vphi)\}
\end{equation*}
\noindent It can now be shown that fixed-points of $\mc{J}^\mc{M}_\mrm{ff}$ generate a fixed-point for $\mc{J}^\mc{M}_\mrm{svi}$. More specifically, and regardless of what the frame that $\mc{M}$ is based on is like, the intersection of all the interpretations of truth for the worlds in $\mc{M}$ is a fixed-point of $\mc{J}^\mc{M}_\mrm{svi}$. 

\begin{thm}
Let $\mc{M}:=\langle W_\mc{M}, \leq_\mc{M}, \mc{D}_\mc{M}, \mc{I}_\mc{M}\rangle$ be a persistent, first-order intuitionistic Kripke $\omega$-structure. Let $\mc{J}^\mc{M}_\mrm{ff}(\mc{I}_\mc{M})=\mc{I}_\mc{M}$. Then:
\begin{equation*}
    \bigcap_{w\in W_\mc{M}}\mc{I}_\mc{M}(w)=\mc{J}^\mc{M}_\mrm{svi}(\bigcap_{w\in W_\mc{M}}\mc{I}_\mc{M})
\end{equation*}
\end{thm}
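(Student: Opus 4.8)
$\textbf{Proof proposal.}$ The plan is to show the two inclusions separately, writing $Y := \bigcap_{w \in W_\mc{M}} \mc{I}_\mc{M}(w)$ for the intersection and exploiting the fixed-point hypothesis $\mc{J}^\mc{M}_\mrm{ff}(\mc{I}_\mc{M}) = \mc{I}_\mc{M}$ together with Proposition \ref{equivalence forcing same structure}. The essential bridge is that, by the fixed-frame fixed-point assumption, for every world $w \in W_\mc{M}$ we have $\#\vphi \in \mc{I}_\mc{M}(w)$ iff $w, \mc{I}_\mc{M} \Vdash_\mrm{ff} \vphi$, and hence, via Proposition \ref{equivalence forcing same structure}, iff $w, \mc{I}_\mc{M} \Vdash^\mc{M}_{\mrm{svi}_\mc{M}} \vphi$. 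Thus $\#\vphi \in Y$ holds iff $w, \mc{I}_\mc{M} \Vdash^\mc{M}_{\mrm{svi}_\mc{M}} \vphi$ for \emph{all} $w \in W_\mc{M}$ simultaneously.

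$\textbf{The inclusion }Y \subseteq \mc{J}^\mc{M}_\mrm{svi}(Y).$ First I would assume $\#\vphi \in Y$, so that $w, \mc{I}_\mc{M} \Vdash^\mc{M}_{\mrm{svi}_\mc{M}} \vphi$ for every $w \in W_\mc{M}$. To show $\#\vphi \in \mc{J}^\mc{M}_\mrm{svi}(Y)$, I take an arbitrary structure $\mc{M}'$ and a world $w' \in W_{\mc{M}'}$ with $\mc{I}_{\mc{M}'}(w') = Y$, and must establish $w', \mc{I}_{\mc{M}'} \Vdash^{\mc{M}'}_{\mrm{svi}_\mc{M}} \vphi$. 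Unfolding this last relation, I consider any $\mc{N}'$ and EI function $f$ with $\mc{I}_{\mc{M}'} \leqq_f \mc{I}_{\mc{N}'}$ satisfying the frame-isomorphism condition $\leq_{\mc{N}'} \cong_\tau \leq_\mc{M}$ and the pointwise containment $\mc{I}_{\mc{N}'}(u) \supseteq \mc{I}_\mc{M}(\tau(u))$ for all $u$. The goal is $f(w'), \mc{I}_{\mc{N}'} \Vdash_{\mc{N}'} \vphi$. The key move is to transport the hypothesis through $\tau$: since $\mc{N}'$ is isomorphic to $\mc{M}$ via $\tau$ and its interpretations dominate those of $\mc{M}$ worldwise, the world $\tau(f(w'))$ in $\mc{M}$ carries at least the content that $f(w')$ carries in $\mc{N}'$, and the relation $w, \mc{I}_\mc{M} \Vdash^\mc{M}_{\mrm{svi}_\mc{M}} \vphi$ at $w = \tau(f(w'))$ — which holds because $\#\vphi \in Y$ — yields forcing at $f(w')$ in $\mc{N}'$. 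I would make this precise by checking that the restriction of $\tau^{-1}$ exhibits $\mc{I}_\mc{M} \leqq \mc{I}_{\mc{N}'}$ in a way meeting the $\mrm{svi}_\mc{M}$ admissibility condition, so that the forcing at $w$ descends to $f(w')$.

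$\textbf{The inclusion }\mc{J}^\mc{M}_\mrm{svi}(Y) \subseteq Y.$ Here I would assume $\#\vphi \in \mc{J}^\mc{M}_\mrm{svi}(Y)$ and show $\#\vphi \in \mc{I}_\mc{M}(w)$ for every $w \in W_\mc{M}$. Fix such a $w$. Because $\#\vphi \in \mc{J}^\mc{M}_\mrm{svi}(Y)$, the defining condition applies to \emph{any} structure with a world whose truth interpretation equals $Y$; I would feed it the structure $\mc{M}$ itself together with a fresh root $r$ whose interpretation is exactly $Y = \bigcap_{v} \mc{I}_\mc{M}(v)$, attached below $\mc{M}$ so that $r$ accesses the chosen $w$. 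Since $Y \subseteq \mc{I}_\mc{M}(v)$ for all $v$, this augmented structure remains persistent. Applying the $\mc{J}^\mc{M}_\mrm{svi}$ condition at $r$ and then taking the admissible extension given by $\mc{M}$ itself (with the identity-like embedding and $\tau$ the identity), I obtain $r \Vdash \vphi$, and by hereditariness $w \Vdash \vphi$ in $\mc{M}$; invoking $\mc{J}^\mc{M}_\mrm{ff}(\mc{I}_\mc{M}) = \mc{I}_\mc{M}$ and Proposition \ref{equivalence forcing same structure} this gives $\#\vphi \in \mc{I}_\mc{M}(w)$, as desired.

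$\textbf{Main obstacle.}$ The hard part will be the bookkeeping in the first inclusion: correctly manipulating the frame-isomorphism $\tau$ together with the embedding $f$ so that the hypothesis "$\vphi$ is $\mrm{svi}_\mc{M}$-forced at every world of $\mc{M}$" can be applied at the right world and the conclusion pulled back to $f(w')$ in $\mc{N}'$. One must verify carefully that the composite data $(\mc{N}', f, \tau)$ really does present an admissible $\mrm{svi}_\mc{M}$-extension relative to the world $\tau(f(w')) \in W_\mc{M}$ — in particular that the pointwise domination $\mc{I}_{\mc{N}'}(u) \supseteq \mc{I}_\mc{M}(\tau(u))$ is exactly the condition needed, and that the isomorphism type is preserved — since any mismatch here would break the transport of forcing. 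The second inclusion is comparatively routine, relying only on the root-attachment construction already used in the proof of Proposition \ref{svi-intersection} and on hereditariness.
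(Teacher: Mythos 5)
Your first inclusion is essentially sound and matches the paper's strategy: converting $\#\vphi\in Y$ into ``$w,\mc{I}_\mc{M}\Vdash^{\mc{M}}_{\mrm{svi}_\mc{M}}\vphi$ for every $w\in W_\mc{M}$'' via Proposition \ref{equivalence forcing same structure}, and then noting that $\tau^{-1}$ is itself an EI function witnessing $\mc{I}_\mc{M}\leqq_{\tau^{-1}}\mc{I}_{\mc{N}'}$ and meeting the $\mrm{svi}_\mc{M}$ admissibility clauses, is exactly the right bookkeeping (the paper instead relabels $\mc{N}'$ onto the frame of $\mc{M}$ and argues through the ff-formulation, but the two routes are interchangeable). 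One directional slip: the clause $\mc{I}_{\mc{N}'}(u)\supseteq\mc{I}_\mc{M}(\tau(u))$ says that $f(w')$ in $\mc{N}'$ carries at least the content of $\tau(f(w'))$ in $\mc{M}$, not the other way around as you wrote; fortunately that is the direction your argument actually needs.

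The genuine gap is in the second inclusion, which you dismiss as routine. The root-attachment trick from Proposition \ref{svi-intersection} does not survive the extra admissibility clauses of $\Vdash_{\mrm{svi}_\mc{M}}$. If you adjoin a fresh root $r$ below $\mc{M}$ to form $\mc{M}^+$, then an admissible extension of $\mc{M}^+$ relative to $\mrm{svi}_\mc{M}$ must have a frame isomorphic to $\leq_\mc{M}$ \emph{and} receive an injective, order-preserving-and-reflecting EI function from $W_{\mc{M}^+}=W_\mc{M}\cup\{r\}$. When $W_\mc{M}$ is finite this is impossible on cardinality grounds alone, and in any case $\mc{M}$ itself is \emph{not} an admissible extension of $\mc{M}^+$: there is no injective ``identity-like'' map fixing $W_\mc{M}$ while also sending $r$ somewhere into $W_\mc{M}$. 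So $r,\mc{I}_{\mc{M}^+}\Vdash^{\mc{M}^+}_{\mrm{svi}_\mc{M}}\vphi$ is in general vacuously true and yields nothing. Furthermore, even if you could extract $w\Vdash_\mc{M}\vphi$, that is strictly weaker than what you need: by the fixed-point hypothesis, $\#\vphi\in\mc{I}_\mc{M}(w)$ amounts to $w,\mc{I}'_\mc{M}\Vdash\vphi$ for \emph{every} ff-extension $\mc{I}'_\mc{M}\succeq\mc{I}_\mc{M}$, not just the original interpretation. The paper's repair is to instantiate the jump condition at a structure $\mc{N}$ with \emph{exactly} the frame of $\mc{M}$ and interpretation $Y$ at the relevant world; then, for an arbitrary ff-extension $\mc{I}'_\mc{M}\succeq\mc{I}_\mc{M}$, the data $(\mc{M}$ with $\mc{I}'_\mc{M}$, $f=\mrm{id}$, $\tau=\mrm{id})$ satisfies all three $\mrm{svi}_\mc{M}$ admissibility clauses (because $Y\subseteq\mc{I}_\mc{M}(v)\subseteq\mc{I}'_\mc{M}(v)$ for all $v$), and quantifying over all such $\mc{I}'_\mc{M}$ yields $w,\mc{I}_\mc{M}\Vdash_\mrm{ff}\vphi$, hence $\#\vphi\in\mc{I}_\mc{M}(w)$.
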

\begin{proof}
For the sake of brevity, we will let $X$ stand for $\bigcap_{w\in W_\mc{M}}\mc{I}_\mc{M}(w)$.\\
\noindent Left-to-right: let $\#\vphi\in X$. Then, for any $w\in W_\mc{M}$,
\begin{equation}
    \forall \mc{I}'_\mc{M}\succeq \mc{I}_\mc{M}(w, \mc{I}'_\mc{M}\Vdash\vphi)\label{equan1}
\end{equation}
In order to show $\#\vphi\in \mc{J}^\mc{M}_\mrm{svi}(X)$, take a structure $\mc{N}=\langle W_\mc{N}, \leq_\mc{N}, \mc{D}_\mc{N}, \mc{I}_\mc{N}\rangle$ and a world $n\in W_\mc{N}$ s.t. $\mc{I}_\mc{N}(n)=X$. Since we need to show $n, \mc{I}_\mc{N}\Vdash^\mc{N}_{\mrm{svi}_\mc{M}}\vphi$, take further a structure $\mc{P}=\langle W_\mc{P}, \leq_\mc{P}, \mc{D}_\mc{P}, \mc{I}_\mc{P}\rangle$ with 
\begin{equation*}
    \mc{I}_\mc{N}\leqq_f\mc{I}_{\mc{P}} \text{ and } \leq_\mc{P}\cong_\tau \leq_\mc{M} \text{ and } \forall p\in W_\mc{P} (\mc{I}_\mc{P}(p)\supseteq \mc{I}_\mc{M}(\tau(p)))
\end{equation*}
If no such structure exists, $n, \mc{I}_\mc{N}\Vdash^\mc{N}_{\mrm{svi}_\mc{M}}\vphi$ follows trivially. If it does exist, it follows that there is a structure $\mc{P}'$ with 
\begin{equation*}
    W_{\mc{P}'}=W_\mc{M} \text{ and } \leq_\mc{P'} = \leq_\mc{M} \text{ and }\leq_\mc{P}\cong_\pi \leq_\mc{P'} \text{ and } \forall p\in W_\mc{P'} (\mc{I}_\mc{P'}(p) = \mc{I}_\mc{P}(\pi(p)))
\end{equation*}
It is immediate to check then that, for all $p\in W_\mc{P'}$
\begin{equation}
    p, \mc{I}_\mc{P'}\Vdash_{\mc{P}'}\vphi \Lra \pi(p), \mc{I}_\mc{P}\Vdash_{\mc{P}}\vphi \label{equan2}
\end{equation}
Take now $f(n)$, i.e., the image of $n$ under the EI function from $\mc{N}$ into $\mc{P}$. In order to prove $n, \mc{I}_\mc{N}\Vdash^\mc{N}_{\mrm{svi}_\mc{M}}\vphi$, we need to show $f(n), \mc{I}_\mc{P}\Vdash\mc{P}\vphi$. From (\ref{equan2}), this obtains iff $\tau^{-1}(f(n)), \mc{I}_\mc{P'}\Vdash_\mc{P'} \vphi$. Since it is easy to see that $\mc{I}_\mc{M}\preceq \mc{I}_\mc{P'}$, our assumption (\ref{equan1}) implies $\tau^{-1}(f(n)), \mc{I}_\mc{P'}\Vdash \vphi$.

\noindent Right-to-left: assume $\#\vphi\in \mc{J}^\mc{M}_\mrm{svi}(X)$, i.e., 
\begin{equation}
    \forall \mc{N}, \forall n\in W_\mc{N}(\mc{I}_\mc{N}(n)=X \Ra n,\mc{I}_\mc{N} \Vdash_{\mrm{svi}_\mc{M}}^\mc{N}\vphi) \label{equan3}
\end{equation}
We now need to show that, for any $w\in W_\mc{M}$,
\begin{equation}
    \forall \mc{I}'_\mc{M}\succeq \mc{I}_\mc{M}(w, \mc{I}'_\mc{M}\Vdash\vphi)
\end{equation}
So take $w'\in W_\mc{M}$. Take $\mc{I}'_\mc{M}\succeq \mc{I}_\mc{M}$. From (\ref{equan3}), we can instantiate the claim to a particular $\mc{N}$, given by the same worlds and accessibility relation as $\mc{M}$, and with $\mc{I}_\mc{N}(w_0)=X$. Then, $w_0, \mc{I}_\mc{N}\Vdash_{\mrm{svi}_\mc{M}}^\mc{N}\vphi$. The latter implies that, for all structures $\mc{P}=\langle W_\mc{P}, \leq_\mc{P}, \mc{D}_\mc{P}, \mc{I}_\mc{P}\rangle$ with 
\begin{equation*}
    \mc{I}_\mc{N}\leqq_f\mc{I}_{\mc{P}} \text{ and } \leq_\mc{P}\cong_\tau \leq_\mc{M} \text{ and } \forall p\in W_\mc{P} (\mc{I}_\mc{P}(p)\supseteq \mc{I}_\mc{M}(\tau(p)))
\end{equation*}
we have 
\begin{equation}
f(w_0), \mc{I}_\mc{P}\Vdash \vphi \label{equan4}  
\end{equation}
If $\mc{I}'_\mc{M}\succeq \mc{I}_\mc{M}$, it is straightforward to see that $\mc{I}_\mc{N}\leqq_\mrm{id} \mc{I}'_\mc{M}$, where $\mrm{id}$ is the identity function. Trivially, $\leq_\mc{M}\cong_\tau \leq_\mc{M}$, with $\tau=\mrm{id}$; and, moreover, $\forall w'\in W_\mc{M} (\mc{I}'_\mc{M}(w')\supseteq \mc{I}_\mc{M}(\tau(p)))$. Therefore, by (\ref{equan4}), $w_0, \mc{I}'_\mc{M}\Vdash\vphi$.
\end{proof}

\noindent Since the intersection of the interpretation of truth for a one-world frame is just the interpretation of truth for the single world, we have the following corollary: 
\begin{corollary}
Let $\mc{M}:=\langle W_\mc{M}, \leq_\mc{M}, \mc{D}_\mc{M}, \mc{I}_\mc{M}\rangle$ be a persistent, first-order intuitionistic Kripke $\omega$-structure with $\vert W_\mc{M}\vert =1$, and let $w\in W_\mc{M}$. Let $\mc{J}^\mc{M}_\mrm{ff}(\mc{I}_\mc{M})=\mc{I}_\mc{M}$. Then:
\begin{equation*}
    \mc{I}_\mc{M}(w)=\mc{J}^\mc{M}_\mrm{svi}(\mc{I}_\mc{M}(w))
\end{equation*}
\end{corollary}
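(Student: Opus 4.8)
The plan is to derive this directly from the preceding Theorem, using the hypothesis $\vert W_\mc{M}\vert = 1$ to collapse the intersection that appears there. First I would fix the single world and write $W_\mc{M}=\{w\}$. The only index over which the intersection $\bigcap_{w'\in W_\mc{M}}\mc{I}_\mc{M}(w')$ ranges is then $w$ itself, so by the meaning of an indexed intersection over a one-element index set we have $\bigcap_{w'\in W_\mc{M}}\mc{I}_\mc{M}(w')=\mc{I}_\mc{M}(w)$. It is worth pausing here only to confirm the harmless notational point that $\mc{I}_\mc{M}(w)$ denotes the set $\{x:\langle w,x\rangle\in\mc{I}_\mc{M}\}$ of codes of sentences forced true at $w$, exactly as used throughout; with a single world there is simply nothing to intersect.

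Next I would instantiate the Theorem. Its hypotheses are that $\mc{M}$ is a persistent first-order intuitionistic Kripke $\omega$-structure and that $\mc{J}^\mc{M}_\mrm{ff}(\mc{I}_\mc{M})=\mc{I}_\mc{M}$, both of which are already assumed in the statement of the Corollary. The Theorem therefore yields $\bigcap_{w'\in W_\mc{M}}\mc{I}_\mc{M}(w')=\mc{J}^\mc{M}_\mrm{svi}\bigl(\bigcap_{w'\in W_\mc{M}}\mc{I}_\mc{M}(w')\bigr)$. Substituting the equality $\bigcap_{w'\in W_\mc{M}}\mc{I}_\mc{M}(w')=\mc{I}_\mc{M}(w)$ obtained in the previous step into both sides gives $\mc{I}_\mc{M}(w)=\mc{J}^\mc{M}_\mrm{svi}(\mc{I}_\mc{M}(w))$, which is precisely the desired conclusion.

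I do not expect any genuine obstacle here: the mathematical content of the Corollary is carried entirely by the Theorem, and the extra hypothesis $\vert W_\mc{M}\vert=1$ serves only to make the global intersection trivial. The one point worth stating explicitly---so that the reader sees the hypothesis doing exactly what the remark preceding the Corollary announces---is that the singleton frame is precisely the case in which the intersection of all world-interpretations coincides with a single world's interpretation, so that the Theorem's fixed-point equation for the intersection specializes to a fixed-point equation for $\mc{I}_\mc{M}(w)$ itself.
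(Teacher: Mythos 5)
Your proposal is correct and matches the paper's own justification exactly: the paper presents this corollary as an immediate consequence of the preceding theorem, noting only that for a one-world frame the intersection $\bigcap_{w\in W_\mc{M}}\mc{I}_\mc{M}(w)$ collapses to $\mc{I}_\mc{M}(w)$. Your explicit substitution into both sides of the theorem's fixed-point equation is precisely the intended argument.
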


\noindent The converse direction of the above result also holds, i.e., from a given fixed-point of $\mc{J}_\mrm{svi}$ we can obtain a fixed-point of $\mc{J}_\mrm{ff}$.

\begin{prop}
Let $\mc{M}:=\langle W_\mc{M}, \leq_\mc{M}, \mc{D}_\mc{M}, \mc{I}_\mc{M}\rangle$ be a persistent, first-order intuitionistic Kripke $\omega$-structure with $\vert W_\mc{M}\vert =1$, and let $w\in W_\mc{M}$. Let $\mc{I}_\mc{M}(w)=X=\mc{J}_\mrm{svi}^\mc{M}(X)$. Then, $\mc{I}_\mc{M}=\mc{J}_\mrm{ff}^\mc{M}(\mc{I}_\mc{M})$.
\end{prop}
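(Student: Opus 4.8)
The plan is to reduce the fixed-point equation $\mc{I}_\mc{M}=\mc{J}^\mc{M}_\mrm{ff}(\mc{I}_\mc{M})$ to a single set equality, using Proposition \ref{equivalence forcing same structure} to pass between ff-forcing and svi$_\mc{M}$-forcing. Since $\mc{M}$ has a single world $w$ with $\mc{I}_\mc{M}(w)=X$, unwinding the definition of the fixed-frame jump gives $\mc{J}^\mc{M}_\mrm{ff}(\mc{I}_\mc{M})(w)=\{\#\vphi \sth w,\mc{I}_\mc{M}\Vdash_\mrm{ff}\vphi\}$, and by Proposition \ref{equivalence forcing same structure} the right-hand side equals $\{\#\vphi \sth w,\mc{I}_\mc{M}\Vdash^\mc{M}_{\mrm{svi}_\mc{M}}\vphi\}$. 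As there is only one world, it therefore suffices to establish
\[
X=\{\#\vphi \sth w,\mc{I}_\mc{M}\Vdash^\mc{M}_{\mrm{svi}_\mc{M}}\vphi\}.
\]
I would obtain this by combining the hypothesis $X=\mc{J}^\mc{M}_\mrm{svi}(X)$ with the claim that, for a one-world $\mc{M}$ with $\mc{I}_\mc{M}(w)=X$, one has $\#\vphi\in\mc{J}^\mc{M}_\mrm{svi}(X)$ if and only if $w,\mc{I}_\mc{M}\Vdash^\mc{M}_{\mrm{svi}_\mc{M}}\vphi$.

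For that biconditional the forward direction is immediate: if $\#\vphi\in\mc{J}^\mc{M}_\mrm{svi}(X)$, I simply instantiate its universal quantifier over pairs with $\mc{M}'=\mc{M}$ and $w'=w$, which is legitimate precisely because $\mc{I}_\mc{M}(w)=X$. The converse is the substantive direction. Assuming $w,\mc{I}_\mc{M}\Vdash^\mc{M}_{\mrm{svi}_\mc{M}}\vphi$, I would take arbitrary $\mc{M}'$, $w'$ with $\mc{I}_{\mc{M}'}(w')=X$, and then an arbitrary structure $\mc{M}''$ witnessing the admissibility requirement of $\Vdash^{\mc{M}'}_{\mrm{svi}_\mc{M}}$, i.e. $\mc{I}_{\mc{M}'}\leqq_f\mc{I}_{\mc{M}''}$, $\leq_{\mc{M}''}\cong_\tau\leq_\mc{M}$, and $\mc{I}_{\mc{M}''}(u)\supseteq\mc{I}_\mc{M}(\tau(u))$ for all $u\in W_{\mc{M}''}$, with the goal of deriving $f(w'),\mc{I}_{\mc{M}''}\Vdash_{\mc{M}''}\vphi$. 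The key step is that, since $\mc{M}$ is one-world, the map $g$ sending $w$ to $f(w')$ is itself an EI function from $\mc{M}$ into $\mc{M}''$: injectivity and the order clause are trivial on a single reflexive point, while the interpretation clause holds because any $\#\psi\in\mc{I}_\mc{M}(w)=X=\mc{I}_{\mc{M}'}(w')$ is carried into $\mc{I}_{\mc{M}''}(f(w'))$ by $f$. Hence $\mc{M}''$, with the very same $\tau$, also witnesses the admissibility requirement of $\Vdash^\mc{M}_{\mrm{svi}_\mc{M}}$ at $w$ via $g$, and applying the hypothesis yields $g(w),\mc{I}_{\mc{M}''}\Vdash_{\mc{M}''}\vphi$, i.e. $f(w'),\mc{I}_{\mc{M}''}\Vdash_{\mc{M}''}\vphi$, as required.

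I expect the main obstacle to be the bookkeeping in the converse direction: one must verify carefully that the witnessing structure $\mc{M}''$ and isomorphism $\tau$ chosen for the admissibility requirement of $\Vdash^{\mc{M}'}_{\mrm{svi}_\mc{M}}$ can be reused verbatim for $\Vdash^\mc{M}_{\mrm{svi}_\mc{M}}$. This is clean because the isomorphism condition $\leq_{\mc{M}''}\cong_\tau\leq_\mc{M}$ and the dominance condition $\mc{I}_{\mc{M}''}(u)\supseteq\mc{I}_\mc{M}(\tau(u))$ are stated relative to $\mc{M}$ in \emph{both} forcing relations, so no re-choice of $\tau$ is needed; the only genuinely new object to produce is the EI function $g$, whose heredity and order conditions collapse to reflexivity exactly because $\vert W_\mc{M}\vert=1$. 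This is the one place where the single-world hypothesis is indispensable, mirroring the role it plays in the right-to-left step of Proposition \ref{transparent models}.
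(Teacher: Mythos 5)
Your proof is correct, and the overall decomposition agrees with the paper's: both arguments funnel everything through Proposition \ref{equivalence forcing same structure} to trade $\Vdash_\mrm{ff}$ at $w$ for $\Vdash^{\mc{M}}_{\mrm{svi}_\mc{M}}$ at $w$, and both handle the easy inclusion by instantiating the universal quantifier of $\mc{J}^{\mc{M}}_{\mrm{svi}}$ at $\mc{M}$ itself. Where you genuinely diverge is in the hard direction. The paper keeps the ff-hypothesis in play: given a witnessing structure $\mc{P}$ for the admissibility requirement at an arbitrary $(\mc{N},n)$, it relabels $\mc{P}$ onto the frame of $\mc{M}$ via an isomorphism $\pi$, observes that the relabeled interpretation is an ff-extension $\mc{I}_{\mc{P}'}\succeq\mc{I}_\mc{M}$, applies the ff-fixed-point assumption there, and transfers the forcing fact back along the relabeling. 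You instead discharge ff-forcing at the outset (using Proposition \ref{equivalence forcing same structure} in \emph{both} directions) and, given the witnessing structure $\mc{M}''$, construct the EI function $g:w\mapsto f(w')$ from the one-world $\mc{M}$ directly into $\mc{M}''$, so that the hypothesis $w,\mc{I}_\mc{M}\Vdash^{\mc{M}}_{\mrm{svi}_\mc{M}}\vphi$ can be applied with the very same $\tau$ and dominance condition. Your route buys a cleaner argument: it avoids the relabeling bookkeeping (and the $\tau$/$\pi$ juggling in the paper's version, where the two isomorphisms get conflated), and it makes the role of $\vert W_\mc{M}\vert=1$ explicit --- it is precisely what makes $g$ an EI function, exactly as in the right-to-left step of Proposition \ref{transparent models}. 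The paper's route, for its part, reuses the $\mc{P}\mapsto\mc{P}'$ relabeling machinery already set up for the preceding theorem, at the cost of re-deriving inline what your appeal to Proposition \ref{equivalence forcing same structure} gives for free.
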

\begin{proof}
Left-to-right: assume $\#\vphi \in \mc{I}_\mc{M}(w)$. We want to show $\forall \mc{I}'_\mc{M}\succeq \mc{I}_\mc{M}(w, \mc{I}'_\mc{M}\Vdash \vphi)$. Since $\#\vphi \in \mc{I}_\mc{M}(w)=\mc{J}_\mrm{svi}^\mc{M}(X)$, 
\begin{equation*}
    \forall \mc{N}, \forall n\in W_\mc{N}(\mc{I}_\mc{N}(n)=X\Ra n, \mc{I}_\mc{N}\Vdash^\mc{N}_{\mrm{svi}_\mc{M}}\vphi).
\end{equation*}
So instantiate the above with $\mc{N}=\mc{M}$ and $n=w$. Then, $w, \mc{I}_\mc{M}\Vdash^\mc{M}_{\mrm{svi}_\mc{M}}\vphi$. By Proposition \ref{equivalence forcing same structure}, $\forall \mc{I}'_\mc{M}\succeq \mc{I}_\mc{M}(w, \mc{I}'_\mc{M}\Vdash \vphi)$.\\
\noindent Right-to-left: assume $\#\vphi \in \mc{J}^\mc{M}_\mrm{ff}(\mc{I}_\mc{M}(w))$. We want to show that $\#\vphi \in \mc{I}_\mc{M}(w)$. Since the latter is a fixed-point of $\mc{J}^\mc{M}_\mrm{svi}$, it suffices to show:
\begin{equation*}
    \forall \mc{N}, \forall n\in W_\mc{N}(\mc{I}_\mc{N}(n)=X\Ra n, \mc{I}_\mc{N}\Vdash^\mc{N}_{\mrm{svi}_\mc{M}}\vphi).
\end{equation*}
\noindent Fix some $\mc{N}, n\in W_\mc{N}$, and take some structure $\mc{P}$ such that 
\begin{equation*}
    \mc{I}_\mc{N}\leqq_f\mc{I}_{\mc{P}} \text{ and } \leq_\mc{P}\cong_\tau \leq_\mc{M} \text{ and } \forall p\in W_\mc{P} (\mc{I}_\mc{P}(p)\supseteq \mc{I}_\mc{M}(\tau(p)))
\end{equation*}
If no such structure exists, the claim follows trivially. If it does, then, as in the previous proposition, there will be a structure $\mc{P}'$ with 
\begin{equation*}
    W_{\mc{P}'}=W_\mc{M} \text{ and } \leq_\mc{P'} = \leq_\mc{M} \text{ and }\leq_\mc{P}\cong_\pi \leq_\mc{P'} \text{ and } \forall p\in W_\mc{P'} (\mc{I}_\mc{P'}(p) = \mc{I}_\mc{P}(\pi(p)))
\end{equation*} 
So, for all $w\in W_\mc{P'}$ (i.e., only $w$):
\begin{equation*}
    w,\mc{I}_\mc{P'}\Vdash_\mc{P'} \vphi \Lra \tau(w), \mc{I}_\mc{P}\Vdash_\mc{P} \vphi.
\end{equation*}
Then, since $\mc{I}_\mc{P}'\succeq \mc{I}_\mc{M}$, our assumption $\#\vphi \in \mc{J}^\mc{M}_\mrm{ff}(\mc{I}_\mc{M}(w))$ implies that $w, \mc{I}_\mc{P'}\Vdash_\mc{P'} \vphi$. Hence, $\tau(w), \mc{I}_\mc{P}\Vdash_\mc{P} \vphi$. Finally $\tau(w)$ is the only world of $\mc{W}_\mc{P}$, so $\tau(w)=f(n)$, whence $f(n), \mc{I}_\mc{P}\Vdash\vphi$. Therefore, since $\mc{P}$ was arbitrary, $n, \mc{I}_\mc{N}\Vdash^\mc{N}_{\mrm{svi}_\mc{M}}\vphi$.
\end{proof}

\noindent It must be noted that the structures of the fixed-frame approach also yield models of the theory $\mrm{ISV}$. This is in line with existing work on axiomatic theories of truth and necessity (and other modal predicates), as given, e.g., in \cite{stern_2014} or \cite{nicolai_2018a}.  

\begin{prop}
$\mrm{ISV}$ is sound with respect to fixed-point models of $\mc{J}_\mrm{ff}$. That is, if $\mc{M}:=\langle W_\mc{M}, \leq_\mc{M}, \mc{D}_\mc{M}, \mc{I}_\mc{M}\rangle$ is a persistent, first-order intuitionistic Kripke $\omega$-structure with $\mc{J}^\mc{M}_\mrm{ff}(\mc{I}_\mc{M})=\mc{I}_\mc{M}$, then $\mc{M}\vDash \mrm{ISV}$. 
\end{prop}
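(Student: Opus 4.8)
The plan is to mirror the soundness argument of Theorem~\ref{soundness of ISV} almost verbatim, replacing svi-forcing by ff-forcing throughout. Two facts carry the whole proof. First, since $\mc{J}^\mc{M}_\mrm{ff}(\mc{I}_\mc{M})=\mc{I}_\mc{M}$, the defining clause of the fixed-frame jump yields, for every world $w\in W_\mc{M}$ and every $\lt$-sentence $\vphi$, the per-world fixed-point equivalence \[ \#\vphi\in\mc{I}_\mc{M}(w)\Lra w,\mc{I}_\mc{M}\Vdash_\mrm{ff}\vphi, \] which is the ff-analogue of the fixed-point property used in the svi case. Second, since $\mc{I}_\mc{M}\preceq\mc{I}_\mc{M}$, instantiating the universal quantifier over frame-preserving extensions in the definition of $\Vdash_\mrm{ff}$ with $\mc{I}_\mc{M}$ itself gives the reflexivity lemma \[ w,\mc{I}_\mc{M}\Vdash_\mrm{ff}\vphi\ \Rightarrow\ w\Vdash_\mc{M}\vphi, \] which plays the role of the claim $(\star)$ in the proof of Theorem~\ref{soundness of ISV}. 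Since $\mc{M}\vDash\mrm{ISV}$ is phrased in terms of ordinary forcing $\Vdash_\mc{M}$, these are exactly the two bridges needed: the first converts membership in $\mc{I}_\mc{M}(w)$ (i.e. forcing of $\T$) into ff-forcing, and the second converts ff-forcing back into ordinary forcing.

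With these in hand, I would fix an arbitrary $w\in W_\mc{M}$ and verify each axiom. For \textbf{ISV1}, $w\Vdash\T(s\subdot{=}t)$ unfolds to the code of $s^\circ=t^\circ$ lying in $\mc{I}_\mc{M}(w)$; the fixed-point equivalence gives $w,\mc{I}_\mc{M}\Vdash_\mrm{ff}(s^\circ=t^\circ)$, and reflexivity together with the standardness of the arithmetical part forces the equation to hold in $\nat$; the converse uses that a true atomic arithmetical formula is forced under every interpretation, hence ff-forced, hence its code lies in $\mc{I}_\mc{M}(w)$. \textbf{ISV2} follows because every axiom of $\hatt$ is forced at every world of every structure (by the earlier proposition that $\mc{M}\vDash\hatt$), so it is ff-forced and its code belongs to $\mc{I}_\mc{M}(w)$. \textbf{ISV3} and \textbf{ISV5} use the fixed-point equivalence to turn the truth hypotheses into ff-forcing statements, then exploit the forcing clauses for $\forall$ and $\ra$ (with hereditariness), exactly as in the svi argument, before converting back via the fixed-point equivalence. \textbf{ISV4} and \textbf{ISV6} are transparency-type axioms: ISV4 follows from the transparency equivalence for fixed-points of $\mc{J}^\mc{M}_\mrm{ff}$ recorded earlier in this section (combined with the fixed-point equivalence), and ISV6 from the fact that the forcing clause for $\T$ succeeds only on codes of sentences. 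Finally, for \textbf{ISV7} one checks $w\Vdash\T\corn\vphi\ra\vphi$ directly: for any $w'\geq_\mc{M}w$, $w'\Vdash\T\corn\vphi$ means $\#\vphi\in\mc{I}_\mc{M}(w')$, whence $w',\mc{I}_\mc{M}\Vdash_\mrm{ff}\vphi$ and so $w'\Vdash_\mc{M}\vphi$ by reflexivity.

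I expect no genuine difficulty here once the two bridging facts are isolated; the only points requiring care are structural rather than combinatorial. I would check that reflexivity really holds---i.e. that $\mc{I}_\mc{M}$ counts as one of its own $\preceq$-extensions, which is immediate---and confirm that the transparency result for $\mc{J}^\mc{M}_\mrm{ff}$ stated above is available to discharge ISV4. The main thing to verify is that the frame-preserving extension order $\preceq$ supports the same inferences that the embedding order $\leqq_f$ supported in Theorem~\ref{soundness of ISV}. Since the ff-forcing clauses for the connectives and quantifiers are just the ordinary Kripke clauses evaluated across frame-preserving extensions, each step of the original compositional reasoning transfers directly, and none of the extra combinatorial input---truncations, added roots, relabellings---needed in the svi proofs is required here.
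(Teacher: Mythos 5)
Your proposal is correct and matches the paper's approach exactly: the paper's own proof consists of the single remark that the argument is ``essentially the same as that of Theorem \ref{soundness of ISV}'', and your two bridging facts (the per-world fixed-point equivalence for $\mc{J}^\mc{M}_\mrm{ff}$ and reflexivity of $\preceq$ playing the role of $(\star)$) are precisely the adaptations that remark presupposes. Your observation that the ff-setting is in fact simpler---no truncations, added roots, or relabellings are needed because the jump is evaluated per world on a fixed frame---is accurate.
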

\begin{proof}
The proof is essentially the same as that of Theorem \ref{soundness of ISV}.
\end{proof}

\noindent Thus, as per their axiomatic consequences, the fixed-frame and our approach seem to us to be on a par.

\section{Compositional supervaluations}\label{section:compositional supervaluations}

One salient feature of intuitionistic logic with respect to classical logic is the greater degree of compositionality it displays at the proof-theoretic level. Thus, Gentzen's classic \citeyearpar{gentzen_1935} already showed that intuitionistic sequent calculus possessed the disjunction property, whereby a disjunction is provable if and only if at least one disjunct is. Likewise, the existence property---i.e., an existentially quantified formula is provable if and only if there is a witness for it---is a well-known feature of even weak arithmetical theories over intuitionistic logic.\footnote{See \cite{friedman_1975}.} The question is whether this can be translated into the theory of truth.

Evidently, compositionality is a very desirable feature of truth theories. Our usual logical connectives are truth functional, and this should intuitively be captured by our theory of truth. Supervaluational theories over classical logic, however, tend to be at odds with compositionality, particularly with the compositionality of disjunctions and existentially quantified statements. Unlike the highly compositional truth theory obtained with the SK schema, the semantic constructions obtained via supervaluations over classical logic fail to deliver compositionality for those logical constants. Moreover, the same occurs with the axiomatic supervaluational theories over classical logic. But, given the connection between intuitionistic logic, and the disjunction and existence properties, one would hope that supervaluational theories of truth over intuitionistic logic might be able to deliver compositionality for said logical constants.

That these hopes are not mere chimeras shows in the fact that Leigh and Rathjen \citeyearpar{leigh_rathjen_2012} already established the consistency of an axiomatic theory---their theory  H$^i$---that looks very much like $\mrm{ISV}$ but includes compositional axioms for disjunction and the existential quantifier. Nonetheless, Leigh and Rathjen provided no supervaluational construction that matched their theory. Hence, the goal of this section is to develop what we call \textit{compositional supervaluations}. These will be the basis of a Kripkean construction that will deliver a supervaluational theory of truth with compositional properties for disjunction and the existential quantifier, along those for conjunction and the universal quantifier. These compositional supervaluations are, however, rather peculiar. Their main peculiarity lies in that this supervaluational forcing relation is determined compositionally for all connectives except for the conditional, making the latter the only one with a clear supervaluational behavior. Nevertheless, as we shall prove, the final theory is fully supervaluational: the fixed-points of the theory can be obtained with a supervaluational jump in the style of section \ref{semantics}.

Another key difference of this approach the forcing relation employed for the underlying models, which is inspired by---but different from---the forcing relation introduced by Do\v sen to deal with non-hereditary Kripke models in \citep{dosen_1991}. This forcing relation takes a ``global" approach to disjunction and the existential quantifier. Such a global approach is what allows for compositionality for these connectives, which are precisely the one for which compositionality tends to be at odds with in supervaluational theories. Let us first introduce the models over which we want our supervaluations to range, together with this new forcing relation. 

\subsection{Global forcing}

In defining the new forcing relation, that we shall call \textit{global} forcing, we want to make sure that it is a satisfactory one. Thus, in this section we will provide a completeness theorem for the global forcing relation with respect to the intuitionistic predicate calculus (although the proof will be given in the appendix). For this purpose, we need to abstract momentarily from the language $\lt$, and consider the landscape of first-order, intuitionistic languages.

Each such first-order, intuitionistic language is denoted by $\mc{L}\subseteq \mc{L}_{\mrm{IPC}}$. Models for these languages are of the form $\mc{M}:=\langle W_\mc{M}, \leq_\mc{M}, \mc{D}_\mc{M}, \mc{I}_\mc{M} \rangle$. $\mc{D}_\mc{M}$ is now allowed to assign any non-empty set to the domain of a world. The union of the range of $\mc{D}_\mc{M}$---in plain words, the domain of the whole structure $\mc{M}$---is denoted by $M$. $\mc{I}_\mc{M}$ is a triple of relations $\mc{M}_\mc{P}$ (assigning an interpretation to predicates), $\mc{M}_\mc{C}$ (assigning an interpretation to constants), and $\mc{M}_\mc{F}$ (assigning an interpretation to functions). Finally, given a closed term $t$, we write $t^\mc{M}$ for the value of the term $t$ under the structure $\mc{M}$ and the world $w\in W_\mc{M}$. Thorough definitions of these points are left for the appendix, so as not to overcrowd this subsection. 

A persistent first-order Kripke model for such an intuitionistic language is one where the interpretation of predicates, constants, and functions for the worlds, as well as the sets assigned to the domain of each world, is non-strictly increasing. This is also properly defined in the appendix. For the sake of simplicity, and as is often done, if $\mc{M}:=\langle W_\mc{M}, \leq_\mc{M}, \mc{D}_\mc{M},  \mc{I}_\mc{M}\rangle$ is a persistent first-order Kripke model for $\mc{L}\subseteq \mc{L}_{\mrm{IPC}}$, we will assume that $\mc{L}$ contains constants $c_d$ for every $d\in M$, and that the object assigned to this constant is, naturally, $d$. In this way, we can dispense with variable assignments---although nothing hangs on this assumption. 

\begin{dfn}\label{global forcing}
Given a persistent first-order Kripke model $\mc{M}:=\langle W_\mc{M}, \leq_\mc{M}, \mc{D}_\mc{M},  \mc{I}_\mc{M}\rangle$ for a  language $\mc{L}\subseteq \mc{L}_{\mrm{IPC}}$, formulae $\vphi$ and $\psi$ of $\mc{L}$, and a world $w\in W_\mc{M}$, we define recursively the relation $w \Vdash_{\mrm{G}} \vphi$ as follows: 

\begin{enumerate}
    \item $w\nVdash_\mrm{G} \bot$.
    \item $w\Vdash_{\mrm{G}}  P_i(t_1,...,t_n)$ iff $\langle P_i, w, \langle t_1,...,t_n\rangle\rangle \in \mc{M}_\mc{P}$, for $P_i(x_1,...,x_n)$ an n-place predicate of $\mc{L}$ and $t_1,...,t_n$ closed terms of $\mc{L}$.
    \item $w\Vdash_{\mrm{G}}  \vphi\wedge \psi$ iff $w\Vdash_{\mrm{G}}  \vphi$ and $w\Vdash_{\mrm{G}}  \psi$.
    \item $w\Vdash_{\mrm{G}}  \vphi\vee \psi$ iff for all $w'\geq w$, $w'\Vdash_{\mrm{G}}  \vphi$ or for all $w'\geq w$, $w'\Vdash_{\mrm{G}}  \psi$.
    \item $w\Vdash_{\mrm{G}}  \vphi\rightarrow \psi$ iff $\forall w'\geq w (w'\Vdash_\mrm{G} \vphi \Ra w'\Vdash_\mrm{G} \psi)$
    \item $w\Vdash_{\mrm{G}}  \exists x \vphi $ iff for all $w'\geq w$ there is some $d\in \mc{D}_\mc{M}(w')$ such that $w\Vdash_{\mrm{G}}  \vphi [x/c_d]$.
    \item $w\Vdash_{\mrm{G}}  \forall x \vphi$ iff for all $w'\geq_\mc{M}w$ and all $d\in \mc{D}_\mc{M}(w')$,  $w'\Vdash_{\mrm{G}}  \vphi [x/c_d]$.
\end{enumerate}

\noindent We write $w\Vdash_{\mrm{G},\mc{M}} \vphi$ when it is deemed appropriate to remark that the underlying structure is $\mc{M}$. Also, $\mc{M}\vDash_\mrm{G} \vphi$ iff $w\Vdash_\mrm{G} \vphi$ for all $w\in W_\mc{M}$. Finally, $\vDash_\mrm{G} \vphi$ iff $\mc{M}\vDash_\mrm{G}\vphi$ holds for any persistent first-order Kripke model $\mc{M}$ of $\mc{L}$.
\end{dfn}

\noindent One can see that the changes with respect to the standard forcing relation concern $\vee$ and $\exists$: in order to make the theory compositional for these two connectives, we give them a `global' character, so that their truth-conditions depend on all worlds accessible from the given one. As stated, in order to show that the G-forcing relation is at least a somewhat meaningful alternative to the standard forcing relation for persistent first-order Kripke structures, we provide a completeness theorem. In particular, we prove: 

\begin{thm}[Completeness of G-models]\label{completeness of G}
Let $\vphi$ be a formula of the language of intuitionistic predicate logic. Then:

\begin{equation*}
    \vdash_\mrm{IPC} \vphi \Leftrightarrow \vDash_\mrm{G} \vphi.
\end{equation*}

\end{thm}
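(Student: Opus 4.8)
The plan is to prove the two directions of the biconditional separately, treating soundness as the routine part and completeness as the main challenge. For soundness ($\vdash_\mrm{IPC}\vphi \Rightarrow \vDash_\mrm{G}\vphi$), I would proceed by induction on the length of an IPC derivation. The only clauses of Definition \ref{global forcing} that differ from standard Kripke forcing are those for $\vee$ and $\exists$, so the bulk of the work is checking that the global readings of these connectives still validate the intuitionistic introduction and elimination rules. In particular, I would verify that the global $\vee$-clause still satisfies persistence (if $w\Vdash_\mrm{G}\vphi\vee\psi$ and $w\leq w'$, then $w'\Vdash_\mrm{G}\vphi\vee\psi$), which follows because ``for all $w''\geq w$'' restricts to ``for all $w''\geq w'$'' when $w\leq w'$; and similarly for $\exists$. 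The disjunction-elimination and existential-elimination rules require a little care precisely because the global clauses are stronger than the usual ones, but stronger truth-conditions only make soundness easier to satisfy for the elimination rules while the introduction rules must be rechecked; I expect each rule to go through by a short argument using persistence and the upward-looking quantification.

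For completeness ($\vDash_\mrm{G}\vphi \Rightarrow \vdash_\mrm{IPC}\vphi$), I would argue by contraposition, building a canonical counter-model for any IPC-underivable $\vphi$. The natural strategy is a Henkin-style construction: take the worlds of the canonical model to be suitably saturated (prime, deductively closed, witnessed) theories in an extended language with enough fresh constants, ordered by inclusion, with each world's domain given by its witnessing constants and the predicate interpretation read off from membership. The key lemma is the truth lemma, stating that $w\Vdash_\mrm{G}\psi$ iff $\psi\in w$ for every sentence $\psi$; the base and the $\wedge,\ra$ cases are standard, so the heart of the matter is showing that the \emph{global} clauses for $\vee$ and $\exists$ match membership in a prime, saturated theory.

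The hard part will be the $\vee$ and $\exists$ cases of the truth lemma, because the global forcing clauses quantify over all \emph{future} worlds rather than asserting existence at the current world, and this mismatch with the usual primeness property must be reconciled. For $\vee$, primeness gives $\vphi\vee\psi\in w \Rightarrow \vphi\in w$ or $\psi\in w$, but the global clause demands that one disjunct hold at \emph{all} $w'\geq w$ uniformly; I would need to show that in this canonical model, once a disjunction belongs to a world, one fixed disjunct propagates upward to every extension, which should follow from choosing the saturation conditions so that the selected disjunct is already forced to persist (e.g.\ by closing worlds under a strong enough primeness that is preserved upward). Symmetrically, for $\exists$ the witnessing constants must be arranged so that a witness available at $w$ remains a witness at every $w'\geq w$, which is why the domains are taken to be non-decreasing and the witnessing constants are added once and for all. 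I anticipate that making the canonical construction validate these uniform, upward-looking conditions—rather than the merely local conditions of the standard Kripke completeness proof—is the genuine obstacle, and it is presumably the reason the authors defer the full argument to the appendix; the rest of the proof is a careful but routine adaptation of the Henkin method.
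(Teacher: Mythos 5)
Your proposal is correct in outline but takes a genuinely different route from the paper. The paper does not build a canonical model at all: it proves the theorem by composing three black boxes --- the G\"odel-style translation $g$ of $\mc{L}_\mrm{IPC}$ into the language of quantified $\mrm{S4}$, the Schwichtenberg--Troelstra result $\vdash_\mrm{S4} g(\vphi)\Leftrightarrow\;\vdash_\mrm{IPC}\vphi$, and Kripke completeness for quantified $\mrm{S4}$ --- together with two model-transformation lemmata showing that G-countermodels for $\vphi$ and $\mrm{S4}$-countermodels for $g(\vphi)$ can be converted into one another, so that $\vDash_\mrm{S4} g(\vphi)\Leftrightarrow\;\vDash_\mrm{G}\vphi$. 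Your direct Henkin construction would work, but the step you flag as the ``genuine obstacle'' is in fact not one: in the canonical model the worlds are prime, witnessed theories ordered by \emph{inclusion}, so once primeness puts a disjunct $\vphi$ (resp.\ a witnessing instance $\vphi[x/c]$) into $w$, that very formula belongs to every $w'\supseteq w$, which is exactly the uniform upward condition the global clauses demand; conversely, since $\leq$ is reflexive, instantiating the global clause at $w'=w$ hands back the ordinary local condition, so no strengthened saturation is needed. Indeed, this observation generalizes: on persistent models with reflexive, transitive accessibility, G-forcing coincides extensionally with ordinary Kripke forcing (prove hereditariness first, as the paper does in its Lemma on hereditariness of $\Vdash_\mrm{G}$, then induct on formulas and use reflexivity for one direction and hereditariness for the other in the $\vee$ and $\exists$ cases), which reduces the whole theorem to standard IPC completeness and arguably makes both your construction and the paper's modal detour heavier than necessary. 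What your route buys that the paper's does not: a canonical-model argument started from an arbitrary $\Gamma\nvdash_\mrm{IPC}\vphi$ yields \emph{strong} completeness, whereas the paper obtains only weak completeness and explicitly states that it could not establish the strong version via the $\mrm{S4}$ translation in the predicate case. What the paper's route buys is brevity, given that the translation theorem and quantified $\mrm{S4}$ completeness are cited rather than proved.
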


\noindent That is, the intuitionistic predicate calculus is (weakly) complete with respect to the class of persistent first-order Kripke models endowed with the global forcing relation.  We leave the proof to the appendix. We also have the following strong soundness result, whose proof is also given in the appendix:

\begin{corollary}[Strong soundness of G]\label{strong soundness G 1}
Let $\vphi$ be a formula of some first-order language $\mc{L}\subseteq \mc{L}_\mrm{IPC}$, let $\Gamma$ be a set of formulae of $\mc{L}$, and let $\mc
{M}$ range over first-order Kripke models. Then: 

\begin{equation*}
   \Gamma  \vdash_{\mrm{IPC}} \vphi \Ra \, \forall \mc{M}(\mc{M}\vDash_\mrm{G} \Gamma \Ra \mc{M}\vDash_\mrm{G} \vphi)
\end{equation*}
\end{corollary}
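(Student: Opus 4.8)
The plan is to reduce strong soundness to the weak soundness half of Theorem \ref{completeness of G}, namely the direction $\vdash_\mrm{IPC}\vphi\Ra\vDash_\mrm{G}\vphi$, by combining the finiteness of derivations with the deduction theorem for $\mrm{IPC}$, and then to peel off the premises one at a time using \emph{only} the clause for the conditional. The point of routing the argument through weak soundness is that all the genuinely hard work concerning the nonstandard clauses for $\vee$ and $\exists$ is already discharged inside the completeness proof in the appendix, so here nothing but the behaviour of $\ra$ is needed.

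First I would observe that any $\mrm{IPC}$-derivation witnessing $\Gamma\vdash_\mrm{IPC}\vphi$ uses only finitely many premises, so there is a finite $\{\gamma_1,\dots,\gamma_n\}\subseteq\Gamma$ with $\gamma_1,\dots,\gamma_n\vdash_\mrm{IPC}\vphi$. Applying the deduction theorem $n$ times yields $\vdash_\mrm{IPC}\gamma_1\ra(\gamma_2\ra\cdots\ra(\gamma_n\ra\vphi)\cdots)$. Since this nested conditional is a formula of the language of intuitionistic predicate logic, the weak soundness direction of Theorem \ref{completeness of G} applies and tells us it is $\mrm{G}$-valid, i.e. $\mrm{G}$-forced at every world of every persistent first-order Kripke model.

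Next I would fix a model $\mc{M}$ with $\mc{M}\vDash_\mrm{G}\Gamma$, so that $\mc{M}\vDash_\mrm{G}\gamma_i$ for each $i\le n$, and fix an arbitrary $w\in W_\mc{M}$. The heart of the argument is to unwind clause (5) of Definition \ref{global forcing} exactly $n$ times. Because $\le_\mc{M}$ is reflexive we may instantiate the universally quantified $w'$ in that clause at $w'=w$; and since every world of $\mc{M}$ forces $\gamma_1$, in particular $w\Vdash_\mrm{G}\gamma_1$, so from $w\Vdash_\mrm{G}\gamma_1\ra(\gamma_2\ra\cdots\ra\vphi)$ we obtain $w\Vdash_\mrm{G}\gamma_2\ra\cdots\ra\vphi$. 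Repeating this for $\gamma_2,\dots,\gamma_n$ strips off every antecedent and leaves $w\Vdash_\mrm{G}\vphi$; as $w$ was arbitrary, $\mc{M}\vDash_\mrm{G}\vphi$. It is worth stressing that this peeling step invokes only the conditional clause, which is the standard one, so the global clauses for $\vee$ and $\exists$ never enter and cannot obstruct the inference.

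The main obstacle I anticipate is not conceptual but a bookkeeping point about free variables in the deduction theorem: over first-order $\mrm{IPC}$ the deduction theorem is unrestricted only when the discharged premise is a sentence, since otherwise a free variable of some $\gamma_i$ could have been generalized inside the derivation. I would neutralize this by exploiting the paper's standing convention that the language carries a constant $c_d$ for every domain element, which lets us treat the relevant members of $\Gamma$ and the conclusion $\vphi$ as sentences of the expanded language; equivalently one passes to universal closures, using that $\mc{M}\vDash_\mrm{G}\gamma$ amounts to forcing all closed instances of $\gamma$ at every world. Once this caveat is settled the remainder is entirely routine.
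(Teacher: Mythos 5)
Your proposal is correct and follows essentially the same route as the paper: isolate a finite subset of $\Gamma$, apply the deduction theorem, invoke the left-to-right direction of Theorem \ref{completeness of G}, and discharge the premises via the G-forcing clause for $\ra$ (the paper packages the antecedents as a single conjunction rather than a nested conditional, but this is immaterial). Your added care about free variables in the first-order deduction theorem is a reasonable refinement that the paper's own proof silently elides.
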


\noindent The right-to-left direction of Corollary \ref{strong soundness G 1}, what is often called strong completeness, is not immediate. We conjecture that it is provable, but we could not establish it. Our proof of Theorem \ref{completeness of G} is based on a similar result that Do\v sen proved for his forcing relation in  \cite{dosen_1991}, which builds on a translation with the modal logic S4. In fact, Do\v sen established a strong completeness result. However, Do\v sen's result only covered the case of intuitionistic propositional logic. Extending it to the predicate case becomes difficult, as the translation into the modal language is no longer suitable. Similarly, we could have established a strong completeness result for the propositional fragment of intuitionistic logic with respect to the G-forcing relation, but fail to deliver the same for the predicate case. 

\subsection{From global forcing to compositional supervaluations}

We can now work out the compositional supervaluations on the basis of the forcing relation $\Vdash_\mrm{G}$. As usual, this requires that we first provide a semantic scheme. Yet this semantic scheme, unlike in all previous cases, is defined compositionally, i.e., piecewise for the different logical connectives. At first, then, the scheme will only seem to possess a supervaluational flavour for one connective: the conditional. So, while we promised that the semantic theory thus generated would be essentially supervaluational, we will have to wait until we can show why. 

\sloppy In what follows, $\mc{M}, \mathcal{N}, \mc{M}', $ etc. range over \textit{persistent first-order intuitionistic Kripke $\omega$-structures}, as defined in Definition \ref{persistent FO int structures}, and $w, w', w_i,$ etc. range over worlds in these structures. We keep the same notions of embedding-interpretation function and interpretation embedding as given, respectively, in definitions \ref{embedding function} and \ref{interpretation embedding}. The notion of truncated structure, which was given in Definition \ref{truncated structure}, will be henceforth very much employed. 

\begin{remark}
Since we are working with persistent $\omega$-structures, the G-forcing global clause for the universal quantifier simply amounts to: 
\begin{equation*}
    w\Vdash_{\mrm{G},\mc{M}}\forall v\psi :\Leftrightarrow  \forall n\in\omega (w\Vdash_{\mrm{G},\mc{M}} \psi(\bar n))
\end{equation*}
\noindent For ease of presentation, we will consider this clause, rather than the full clause of Definition \ref{global forcing}, in the rest of this section.
\end{remark}

\begin{dfn}
Given a persistent first-order intuitionistic Kripke $\omega$-structure $\mc{M}:=\langle W_\mc{M}, \leq_\mc{M}, \mc{D}_\mc{M}, \mc{I}_\mc{M} \rangle$, a world $w\in W_\mc{M}$, and a formula $\vphi$ of $\lt$, we define the relation $w\Vdash^{\mc{M}}_\mrm{csv}\vphi$ inductively as follows:

\begin{enumerate}
    \item $w\Vdash^{\mc{M}}_\mrm{csv}R(t_1,...,t_n)  :\Leftrightarrow w\Vdash_{\mrm{G},\mc{M}} R(t_1,...,t_n)$ for $R(x_1,...,x_n)$ an atomic sentence of $\lt$, $t_1,...,t_n$ closed terms.\footnote{This clause can also be given as: 
    \begin{center}
        $w\Vdash^{\mc{M}}_\mrm{csv}R(t_1,...,t_n)  :\Leftrightarrow \forall \mc{M}', f(\mc{I}_\mc{M}\leqq_f \mc{I}_{\mc{M}'} \, \Rightarrow f(w)\Vdash_{\mrm{G},\mc{M}'} R(t_1,...,t_n))$ for $R(x_1,...,x_n)$ an atomic sentence of $\lt$, $t_1,...,t_n$ closed terms.
    \end{center}
    While this gives the clause a more supervaluational flavour, it is, in the end, superfluous.}
    \item $w\Vdash^{\mc{M}}_\mrm{csv}\psi\vee \chi  :\Leftrightarrow \forall w'\geq_\mc{M} w(w'\Vdash^{\mc{M}}_\mrm{csv}\psi)$ or $\forall w'\geq_\mc{M} w(w'\Vdash^{\mc{M}}_\mrm{csv}\chi)$
     \item $w\Vdash^{\mc{M}}_\mrm{csv}\psi\wedge \chi  :\Leftrightarrow w\Vdash^{\mc{M}}_\mrm{csv}\psi$ and $w\Vdash^{\mc{M}}_\mrm{csv}\chi$
     \item $w\Vdash^{\mc{M}}_\mrm{csv}\psi\ra \chi
     :\Leftrightarrow \forall \mc{M}', f [\mc{I}_{\mc{M}}\leqq_f \mc{I}_{\mc{M}'} \Ra
     f(w)\Vdash_{\mrm{G}, \mc{M}}\psi\ra\chi]$
     \item  $w\Vdash^{\mc{M}}_\mrm{csv}\forall v\psi :\Leftrightarrow  \forall n\in\omega (w\Vdash^{\mc{M}}_\mrm{csv} \psi(\bar n))$
     \item $w\Vdash^{\mc{M}}_\mrm{csv}\exists v\psi :\Leftrightarrow \forall w'\geq_\mc{M}w \, \exists n\in \omega (w'\Vdash^{\mc{M}}_\mrm{csv}\psi(\bar n))$
\end{enumerate}
    
\end{dfn}

\noindent $w\Vdash^{\mc{M}}_\mrm{csv}\vphi$  is to be read as $w$\textit{ csv-forces} $\vphi$ given $\mc{M}$. The semantic scheme given by the csv-forcing relation is labeled the CSV scheme. As before, the jump operator $\jcsv: \mc{P}(\mrm{SENT}_{\lt}) \longrightarrow \mc{P}(\mrm{SENT}_{\lt})$ is defined as:

\begin{center}
$\mc{J}_\mrm{csv}(X):=\{\# \vphi \sth \forall \mc{M}, \forall w\in W_\mc{M}(\mc{I}_{\mc{M}} (w)=X \Rightarrow w\Vdash^{\mc{M}}_\mrm{csv}\vphi)\}$  
\end{center}

The monotonicity of $\jcsv$ is nonetheless slightly harder to prove than it was for the previous jump operators. Before we get there, we prove a few properties of the CSV scheme. First, we can establish that csv-forcing implies global forcing:

\begin{prop}\label{intuitionistic soundness}
For any structure $\mc{M}$, world $w\in W_\mc{M}$, and formula $\vphi$ of $\lt$:  
\begin{center}
$w\Vdash^\mc{M}_\mrm{csv}\vphi \Ra w\Vdash_{\mrm{G}, \mc{M}}\vphi$.
\end{center}
\end{prop}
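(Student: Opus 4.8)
The plan is to prove the implication by a straightforward induction on the build-up of $\vphi$, comparing each csv-forcing clause with the corresponding G-forcing clause of Definition \ref{global forcing}. The guiding observation is that, for every connective except the conditional, the csv-clause quantifies over accessible worlds in exactly the same pattern as the G-clause, so those cases reduce immediately to the induction hypothesis; the conditional case, by contrast, is not a genuine recursion through csv-forcing at all, since its clause already refers to G-forcing of the whole conditional, and it is dispatched simply by instantiating with the identity embedding.

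First I would settle the base cases. For $\vphi$ atomic—this includes both the arithmetical atoms and, crucially, the truth atoms $\T(s)$—clause 1 stipulates $w\Vdash^\mc{M}_\mrm{csv}\vphi \Leftrightarrow w\Vdash_{\mrm{G},\mc{M}}\vphi$, so the implication is immediate. The case $\vphi=\bot$ is trivial, since neither relation forces $\bot$, so the antecedent $w\Vdash^\mc{M}_\mrm{csv}\bot$ fails and the conditional holds vacuously.

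Next I would treat the inductive cases for $\wedge$, $\vee$, $\forall$ and $\exists$, each of which passes the induction hypothesis through an unchanged world-quantifier. For instance, from $w\Vdash^\mc{M}_\mrm{csv}\psi\vee\chi$ one obtains either $w'\Vdash^\mc{M}_\mrm{csv}\psi$ for all $w'\geq w$ or $w'\Vdash^\mc{M}_\mrm{csv}\chi$ for all such $w'$; applying the induction hypothesis at each $w'$ yields the defining condition for $w\Vdash_{\mrm{G},\mc{M}}\psi\vee\chi$. The conjunction case is pointwise, the universal case quantifies over all numerals (using the $\omega$-structure form of the $\forall$-clause recorded in the remark above), and the existential case uses the global clause $\forall w'\geq w\,\exists n$; in every instance the induction hypothesis is applied inside the same quantifier block that already appears on the G-side.

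The one case requiring a distinct idea—and the place I expect the (mild) crux to sit—is the conditional, which is handled exactly as the claim $(\star)$ in the proof of Theorem \ref{soundness of ISV}. By definition $w\Vdash^\mc{M}_\mrm{csv}\psi\ra\chi$ asserts that $f(w)\Vdash_{\mrm{G},\mc{M}'}\psi\ra\chi$ for \emph{every} structure $\mc{M}'$ and every EI function $f$ with $\mc{I}_\mc{M}\leqq_f\mc{I}_{\mc{M}'}$. Since the identity is an EI function—that is, $\mc{I}_\mc{M}\leqq_\mrm{id}\mc{I}_\mc{M}$, so embeddability is reflexive—I can instantiate this universal statement at $\mc{M}'=\mc{M}$ and $f=\mrm{id}$, obtaining $w\Vdash_{\mrm{G},\mc{M}}\psi\ra\chi$ directly, with no appeal to the induction hypothesis. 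Because negation is $\psi\ra\bot$, it is subsumed under this case, so no separate treatment is needed and the induction is complete. The only real content of the proof is thus the identity-embedding instantiation at the conditional; everything else is a routine transfer of the induction hypothesis across identical world-quantifiers.
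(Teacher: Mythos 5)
Your proof is correct and follows essentially the same route as the paper's: an induction on the complexity of $\vphi$ in which the non-conditional cases transfer the induction hypothesis through identical world-quantifier patterns, and the conditional case is settled by instantiating the universal quantifier over embeddings at the identity, using $\mc{I}_\mc{M}\leqq_\mrm{id}\mc{I}_\mc{M}$. The only cosmetic difference is that the paper argues the contrapositive while you argue the implication directly.
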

\begin{proof}
We prove the contrapositive by induction on the complexity of $\vphi$. The atomic case follows by definition. For the inductive step, we examine a couple of connectives: 
\begin{itemize}
    \item If $\vphi$ is of the form $\psi\ra\chi$: since $\mc{I}_\mc{M}\leqq_{\mrm{id}}\mc{I}_\mc{M}$, $w\nVdash_{\mrm{G}, \mc{M}}\psi\ra \chi$ implies $w\nVdash^\mc{M}_\mrm{csv}\psi\ra\chi$.
    \item If $\vphi$ is of the form $\psi\vee\chi$: then there is $w'\geq_\mc{M}w$ s.t. $w'\nVdash_{\mrm{G},\mc{M}}\psi$ and $w''\geq_\mc{M}w$ s.t. $w''\nVdash_{\mrm{G},\mc{M}}\chi$. By IH, $w'\nVdash_{\mrm{csv}}^\mc{M}\psi$ and $w''\nVdash_{\mrm{csv}}^\mc{M}\chi$, whence $w\nVdash_{\mrm{csv}}^\mc{M}\psi\vee\chi$ follows.  
    \item If $\vphi$ is of the form $\psi\wedge\chi$: then we can assume WLOG that $w\nVdash_{\mrm{G},\mc{M}}\psi$. By IH, $w\nVdash_{\mrm{csv}}^\mc{M}\psi$, whence $w\nVdash_{\mrm{csv}}^\mc{M}\psi\wedge \chi$ follows.
\end{itemize}
\end{proof}
Since the csv-forcing relation only considers worlds upwards, i.e., worlds accessible from the given world and not worlds that access the given world, the following holds:

\begin{prop}\label{truncated-model-csv}
Let $\mc{M}$ be a structure, and $w, w'\in W_\mc{M}$ be such that $w'\leq_\mc{M} w$. Let $\mc{M}_{w'}$ be the structure truncated at $w'$. Then, for $\vphi$ a formula of $\lt$:
\begin{equation*}
    w\Vdash^{\mc{M}}_\mrm{csv} \vphi \Leftrightarrow w\Vdash^{\mc{M}_{w'}}_\mrm{csv}.
\end{equation*}
\end{prop}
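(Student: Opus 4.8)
The plan is to prove a slightly strengthened statement by induction on the complexity of $\vphi$: for \emph{every} world $v$ with $w'\leq_\mc{M} v$ one has $v\Vdash^\mc{M}_\mrm{csv}\vphi\Lra v\Vdash^{\mc{M}_{w'}}_\mrm{csv}\vphi$; the proposition is then the special case $v=w$ (note $w\in W_{\mc{M}_{w'}}$ since $w'\leq_\mc{M} w$). Quantifying over all $v\geq_\mc{M} w'$ is what makes the induction close, because the clauses for $\vee$ and $\exists$ recurse into worlds \emph{above} the world being evaluated, and every such world is still $\geq_\mc{M} w'$. Throughout I would use the bookkeeping fact that for $v\geq_\mc{M} w'$ the set $\{u: u\geq_\mc{M} v\}$ together with its order is literally identical in $\mc{M}$ and in $\mc{M}_{w'}$ (as $W_{\mc{M}_{w'}}=\{u:u\geq_\mc{M} w'\}$ carries the restricted order), and that $\mc{I}_{\mc{M}_{w'}}(v)=\mc{I}_\mc{M}(v)$.

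The easy cases are the atomic formulae, $\wedge$, $\forall$, $\vee$ and $\exists$. For an atomic sentence the value is either read off from $\nat$ (hence structure-independent) or, for $\T(s)$, depends only on $\mc{I}_\mc{M}(v)$, which truncation leaves unchanged. For $\wedge$ and $\forall$ the clauses evaluate the subformulae at the same world $v$, so the induction hypothesis applies directly. For $\vee$ and $\exists$ the clauses quantify over worlds $u\geq_\mc{M} v$; by the bookkeeping fact these are the same worlds with the same order in both structures, each satisfies $u\geq_\mc{M} w'$, and so the induction hypothesis applies to $u$ and the relevant subformula. Thus these cases go through mechanically.

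The real work is the conditional $\psi\ra\chi$, where, interestingly, the induction hypothesis is not used at all: the csv-clause reduces $v\Vdash^\mc{M}_\mrm{csv}\psi\ra\chi$ to a statement purely about $\Vdash_\mrm{G}$, namely that $f(v)\Vdash_\mrm{G}\psi\ra\chi$ for every embedding $\mc{I}_\mc{M}\leqq_f\mc{I}_{\mc{M}'}$. One direction is immediate: any embedding $f$ of $\mc{M}$ restricts to an embedding $f\uph W_{\mc{M}_{w'}}$ of $\mc{M}_{w'}$ (injectivity, order-(bi)preservation and truth-preservation are all inherited on a subset, and $\mc{I}_{\mc{M}_{w'}}=\mc{I}_\mc{M}\uph W_{\mc{M}_{w'}}$) with the same value at $v$, so the $\mc{M}_{w'}$-quantification is at least as strong, giving $v\Vdash^{\mc{M}_{w'}}_\mrm{csv}\psi\ra\chi\Ra v\Vdash^{\mc{M}}_\mrm{csv}\psi\ra\chi$. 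For the converse I would extend a given embedding $g:\mc{M}_{w'}\to\mc{N}'$ to an embedding of the full $\mc{M}$: build $\mc{N}''$ by gluing onto $\mc{N}'$ fresh copies of the worlds in $W_\mc{M}\setminus W_{\mc{M}_{w'}}$ (those \emph{not} above $w'$), keeping $\mc{M}$'s order among them, placing a new world $b$ below $g(v')$ exactly when $b\leq_\mc{M} v'$, and setting $\mc{I}_{\mc{N}''}$ equal to $\mc{I}_\mc{M}$ on the new worlds. Two observations make this work: every $b\in W_\mc{M}\setminus W_{\mc{M}_{w'}}$ satisfies $b\not\geq_\mc{M} v'$ for all $v'\geq_\mc{M} w'$, so the new worlds are never above an image of $W_{\mc{M}_{w'}}$, and in particular none is $\geq g(v)$, whence the upward cone of $g(v)$ is identical in $\mc{N}''$ and $\mc{N}'$; and persistence across the glue holds because $\mc{I}_\mc{M}(b)\subseteq\mc{I}_\mc{M}(v')\subseteq\mc{I}_{\mc{N}'}(g(v'))$ whenever $b\leq_\mc{M} v'$. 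The extension $g''$ is then an EI function $\mc{I}_\mc{M}\leqq_{g''}\mc{I}_{\mc{N}''}$ with $g''(v)=g(v)$; since every $\Vdash_\mrm{G}$-clause is upward-looking, G-forcing of $\psi\ra\chi$ at $g(v)$ agrees in $\mc{N}''$ and $\mc{N}'$, so applying the $\mc{M}$-statement to $g''$ yields the $\mc{N}'$-statement and completes this case.

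I expect the gluing construction to be the main obstacle: one must verify that the enlarged relation is genuinely a partial order, that $g''$ preserves and reflects it (only on pairs from $W_\mc{M}$, as Definition \ref{embedding function} demands), and that $\mc{N}''$ is a legitimate persistent first-order intuitionistic Kripke $\omega$-structure. I would also isolate, as a small auxiliary lemma, the $\Vdash_\mrm{G}$-analogue of Proposition 3 — that $\Vdash_\mrm{G}$ is insensitive to worlds outside the upward cone of the world being evaluated — since it is exactly what licenses transferring the value of $\psi\ra\chi$ between $\mc{N}''$ and $\mc{N}'$.
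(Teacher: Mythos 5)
Your proposal is correct, and it supplies considerably more than the paper does: Proposition \ref{truncated-model-csv} is stated there without proof, justified only by the remark that csv-forcing ``only considers worlds upwards.'' Your induction makes that remark precise for the atomic, $\wedge$, $\forall$, $\vee$ and $\exists$ clauses (where the strengthening to all $v\geq_\mc{M} w'$ and the observation that upward cones and truth interpretations are literally preserved by truncation do all the work), and it correctly isolates the one case where the paper's one-liner is not literally true: the clause for $\psi\ra\chi$ quantifies over EI functions defined on \emph{all} of $W_\mc{M}$, including worlds below $w'$, so it is not an upward-looking clause on its face. Your two-sided argument there is the right one and is sound: restriction of an embedding of $\mc{M}$ to $W_{\mc{M}_{w'}}$ gives one direction for free, and the gluing construction for the converse goes through --- the new worlds are never above any world of $\mc{N}'$, so the upward cone of $g(v)$ (and hence the G-value of $\psi\ra\chi$ there) is untouched, and persistence across the glue follows from persistence of $\mc{M}$ together with clause 3 of Definition \ref{embedding function}. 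The only detail you should make explicit when writing it up is that the order on $\mc{N}''$ must be taken as the transitive closure of what you describe (a fresh $b$ must also sit below every $u\in W_{\mc{N}'}$ with $g(v')\leq_{\mc{N}'}u$ for some $v'\geq_\mc{M}b$); this closure adds no new pairs among the $g''$-images of $W_\mc{M}$ because $g$ reflects the order, so condition 2 of Definition \ref{embedding function} survives. This gluing technique is exactly the one the paper itself uses in Proposition \ref{svi-intersection} and Lemmas \ref{monotonicity lemma} and \ref{csv is supervaluational}, so your argument fits the paper's toolbox; what it buys over the paper's presentation is an actual verification that the embedding quantifier in the conditional clause is insensitive to the part of the structure below $w'$.
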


\noindent We can also show that csv-forcing is hereditary:

\begin{prop}
For any structure $\mc{M}$, world $w\in W_\mc{M}$, and formula $\vphi$ of $\lt$: 
\begin{equation*}
    w\Vdash_\mrm{csv}^\mc{M}\vphi \Ra \forall w'\geq_\mc{M}w (w'\Vdash^\mc{M}_\mrm{csv}\vphi)
\end{equation*}
\end{prop}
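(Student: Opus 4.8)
The plan is to argue by induction on the complexity of $\vphi$, fixing a structure $\mc{M}$, a world $w\in W_\mc{M}$ with $w\Vdash^\mc{M}_\mrm{csv}\vphi$, and an arbitrary $w'\geq_\mc{M}w$, and showing $w'\Vdash^\mc{M}_\mrm{csv}\vphi$. The clauses defining csv-forcing split into two groups, and the inductive bookkeeping differs accordingly. For the base case, where $\vphi$ is atomic, csv-forcing reduces by definition to $w\Vdash_{\mrm{G},\mc{M}}\vphi$: for an arithmetical atom this is world-independent (it records only satisfaction in $\nat$), while for $\vphi=\T(s)$ we have $s^\nat=\#\psi\in\mc{I}_w$, and the hereditary condition on persistent structures yields $\#\psi\in\mc{I}_{w'}$, so $w'\Vdash_{\mrm{G},\mc{M}}\T(s)$.

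For the inductive step, the \emph{global} clauses---disjunction and the existential---need no appeal to the induction hypothesis, since they already quantify upward. If $w\Vdash^\mc{M}_\mrm{csv}\psi\vee\chi$, then either every $w''\geq_\mc{M}w$ csv-forces $\psi$ or every such $w''$ csv-forces $\chi$; since $\{w'' : w''\geq_\mc{M}w'\}\subseteq\{w'' : w''\geq_\mc{M}w\}$, the same disjunct witnesses $w'\Vdash^\mc{M}_\mrm{csv}\psi\vee\chi$, and the existential case is identical. The genuinely local clauses, conjunction and the universal, are handled by the induction hypothesis: from $w\Vdash^\mc{M}_\mrm{csv}\psi$ and $w\Vdash^\mc{M}_\mrm{csv}\chi$ (resp.\ $w\Vdash^\mc{M}_\mrm{csv}\psi(\bar n)$ for all $n\in\omega$) the hypothesis transports forcing to $w'$ componentwise, and the defining clause reassembles $\psi\wedge\chi$ (resp.\ $\forall v\,\psi$).

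The case I expect to carry the weight is the conditional, which is the one clause with a genuinely supervaluational shape. Suppose $w\Vdash^\mc{M}_\mrm{csv}\psi\ra\chi$, i.e.\ $f(w)\Vdash_{\mrm{G},\mc{M}'}\psi\ra\chi$ for every $\mc{M}'$ and every $f$ with $\mc{I}_\mc{M}\leqq_f\mc{I}_{\mc{M}'}$. Fixing an arbitrary such $\mc{M}'$ and $f$, the idea is to reuse the very same witness: since $w\leq_\mc{M}w'$ and $f$ preserves the order (clause 2 of Definition \ref{embedding function}), we get $f(w)\leq_{\mc{M}'}f(w')$, and the global forcing clause for the conditional is itself immediately upward-persistent---if $f(w)\Vdash_{\mrm{G},\mc{M}'}\psi\ra\chi$ then, restricting the quantifier over accessible worlds, $f(w')\Vdash_{\mrm{G},\mc{M}'}\psi\ra\chi$. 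As $\mc{M}'$ and $f$ were arbitrary, this is precisely $w'\Vdash^\mc{M}_\mrm{csv}\psi\ra\chi$. The only subtlety, and the crux of the argument, is to notice that heredity here rides entirely on order-preservation of the embedding together with the built-in persistence of the standard conditional clause of $\Vdash_\mrm{G}$; no induction hypothesis is needed, and the supervaluational quantifier over extensions is handled uniformly because the single embedding $f$ serves both $w$ and $w'$.
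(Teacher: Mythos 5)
Your proof is correct and follows essentially the same route as the paper's: induction on complexity, with the base case and the $\forall$/$\wedge$ cases handled by persistency and the induction hypothesis, the $\vee$/$\exists$ cases read off directly from their upward-quantified clauses, and the conditional case resting on the order-preservation of the embedding $f$ together with the upward persistence of the $\Vdash_{\mrm{G}}$ clause for $\ra$. The only difference is cosmetic: the paper argues the conditional case contrapositively (extracting a witness $v\geq_{\mc{M}'} f(w')\geq_{\mc{M}'} f(w)$ from a failure at $w'$), whereas you argue it directly; the key observation is identical.
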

\begin{proof}
Once again, by induction on the complexity of $\vphi$. The persistency requirement guarantees the base case, as well as the case of $\forall$. The case of $\wedge$ follows by IH. The cases of $\vee$ and $\exists$ are straightforward from their respective definitions. 

Finally, for the case in which $\vphi$ is of the form $\psi\ra\chi$: let $w'\geq_\mc{M}w$ be such that $w'\nVdash_\mrm{csv}^\mc{M}\psi\ra\chi$. Then there are $\mc{M}'$ and $f$ with $\mc{I}_\mc{M}\leqq_f \mc{I}_\mc{M'}$ and $f(w')\nVdash_{\mrm{G},\mc{M}}\psi\ra\chi$. That is, there is $v\geq_\mc{M'}f(w')$ with $v\Vdash_{\mrm{G},\mc{M}}\psi$ and $v\nVdash_{\mrm{G},\mc{M}}\chi$. Since $f$ preserves the accessibility relation, $f(w)\leq_\mc{M'}v$, and so $f(w)\nVdash_{\mrm{G},\mc{M}}\psi\ra\chi$ follows, whence $w\nVdash_\mrm{csv}^\mc{M}\psi\ra\chi$.
\end{proof}

\noindent Now comes the crucial lemma for proving monotonicity: 

\begin{lemma}\label{monotonicity lemma}
Let $X\subseteq \mrm{SENT}_{\lt}$, let $\mc{M}$ be a structure, let $w$ be a world in $W_\mc{M}$ with $\mc{I}_\mc{M}(w)=X$, and let $\vphi$ be a formula of $\lt$ such that $w\nVdash^\mc{M}_\mrm{csv}\vphi$. Then, for all $X'\subseteq \mrm{SENT}_{\lt}$ with $X\subseteq X'$, there is a structure $\mc{M}'$ such that $w\in  W_\mc{M'}$ and $\mc{I}_\mc{M'}(w)=X'$ and $w\nVdash^\mc{M'}_\mrm{csv}\vphi$.
\end{lemma}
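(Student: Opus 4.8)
Before planning the induction I would flag a direction issue that has to be settled first: the inclusion must be read as $X'\subseteq X$, i.e.\ non-forcing is preserved when the root's truth set \emph{shrinks}, and it is exactly this shrinking form that the monotonicity of $\jcsv$ consumes (just as the step ``define $\mc{M}'_0$ with $\mc{I}_{\mc{M}'_0}(f(w))=X$'' does in Proposition \ref{monotonicity svi operator}.i). The superset reading cannot hold, because the atomic $\T$-clause and the global clauses for $\vee,\exists$ are monotone in the interpretation: if $\#\psi,\#\neg\psi\notin X$ then $w\nVdash^\mc{M}_\mrm{csv}\T\corn\psi\vee\neg\T\corn\psi$ in the one-world structure with root $X$, yet for the superset $X'=X\cup\{\#\psi\}$ the atomic clause forces $\T\corn\psi$ at the root and, by persistence, at every world above it, so the first disjunct is globally forced and the formula is csv-forced in \emph{every} structure with root interpretation $X'$; no witness of the kind the superset reading demands can then exist. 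So the genuinely provable and useful statement is the one with $X'\subseteq X$, and that is what I would prove.

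With the target $X'\subseteq X$, the plan is a single induction on the complexity of $\vphi$ showing that csv-forcing at a fixed root is monotone in that root's truth set, from which the lemma follows by contraposition. First I would use Proposition \ref{truncated-model-csv} to replace $\mc{M}$ by its truncation $\mc{M}_w$, so that $w$ is the root and every other world lies strictly above it. The witness $\mc{M}'$ is then $\mc{M}_w$ with its frame and all non-root interpretations left untouched and only the root's truth set reset to $X'$; persistence is immediate, since each $v>w$ still carries an interpretation $\supseteq X\supseteq X'$. A second observation, again from Proposition \ref{truncated-model-csv}, is that altering only the root leaves csv-forcing at every strictly higher world unchanged (the truncations $(\mc{M}')_v$ and $(\mc{M}_w)_v$ coincide for $v>w$); hence the induction only has to control the root.

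For the atomic, $\wedge$, $\forall$, $\vee$ and $\exists$ cases the induction is routine: the $\T$-clause is monotone in the root's set, and the global clauses reduce to the root (handled by the inductive hypothesis) together with strictly higher worlds (where forcing is unchanged). The one genuinely supervaluational case, and the step I expect to be the main obstacle, is $\psi\ra\chi$, since its clause quantifies over all EI-extensions via $\Vdash_\mrm{G}$ and so cannot be decided locally. The key is a comparison of two \emph{classes} of extensions: any structure $\mc{N}$ extending $\mc{M}_w$ (root $X$) via $g$ also extends $\mc{M}'$ (root $X'\subseteq X$) via the same $g$, because clause (3) of Definition \ref{embedding function} only gets easier when the source interpretation is smaller. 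Thus the admissible extensions of $\mc{M}'$ \emph{contain} those of $\mc{M}_w$, so a universal statement over the former implies it over the latter, giving $w\Vdash^{\mc{M}'}_\mrm{csv}\psi\ra\chi\Rightarrow w\Vdash^{\mc{M}_w}_\mrm{csv}\psi\ra\chi$. Completing the induction and contraposing yields $w\nVdash^{\mc{M}}_\mrm{csv}\vphi\Rightarrow w\nVdash^{\mc{M}'}_\mrm{csv}\vphi$, which is the lemma. Getting the inclusion between the two extension-classes to point the right way is where all the work sits; everything else is bookkeeping once truncation reduces the problem to the root.
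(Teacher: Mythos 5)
You are right to flag the direction of the inclusion, and your diagnosis is confirmed by the paper itself: the proof of Lemma \ref{monotonicity lemma} explicitly writes ``$X'\subseteq X\subseteq \mc{I}_{\mc{M}_w}(i)$'' in the disjunction case, and the monotonicity proposition applies the lemma by starting from a failure at a world with the \emph{larger} set $Y$ and producing a failure at a world with the \emph{smaller} set $X\subseteq Y$. So the hypothesis ``$X\subseteq X'$'' in the statement is a typo for ``$X'\subseteq X$'', and your counterexample ($\T\corn\psi\vee\neg\T\corn\psi$ with $\#\psi,\#\neg\psi\notin X$ and $X'=X\cup\{\#\psi\}$, where the atomic clause plus hereditariness forces the first disjunct globally at any world with truth set $X'$) correctly shows the literal reading is false. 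Your proof of the corrected statement is sound and agrees with the paper on the one genuinely supervaluational step: for $\psi\ra\chi$, the witnessing EI-extension of the $X$-rooted structure is, via the same function, an EI-extension of the $X'$-rooted structure because clause (3) of Definition \ref{embedding function} only weakens when the source interpretation shrinks. Where you differ is in the bookkeeping for $\vee$ and $\exists$: the paper applies the induction hypothesis to the witnessing worlds $w',w''$ separately, truncates the resulting structures, and glues them under a fresh root carrying $X'$, whereas you keep a single uniform witness throughout --- $\mc{M}_w$ with only the root's truth set reset to $X'$ --- and use Proposition \ref{truncated-model-csv} to see that csv-forcing at worlds strictly above $w$ is untouched, invoking the (suitably strengthened) induction hypothesis only when a witness coincides with the root. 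Your version is slightly cleaner since one construction serves every case, at the cost of carrying a marginally stronger inductive statement (``the specific root-reset structure works'') rather than the bare existential of the lemma; both routes are correct.
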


\begin{proof}
Once more, by induction on the complexity of $\vphi$. We examine the base case and a few relevant cases from the inductive step:
\begin{itemize}
    \item If $\vphi$ is atomic: consider the structure $\mc{M}'$ that results from taking the truncated structure $\mc{M}_w$ and letting $\mc{I}_{\mc{M}'}(w)=X'$. The structure remains a persistent, first-order intuitionistic Kripke $\omega$-structure, and obviously $w\nVdash^\mc{M'}_\mrm{csv}\vphi$.
    \item If $\vphi$ is $\psi\vee\chi$: $w\nVdash^\mc{M}_\mrm{csv}\psi\vee\chi$ implies the existence of $w'\leq_\mc{M}$ and $w''\leq_\mc{M}w$ s.t. $w'\nVdash^\mc{M}_\mrm{csv}\psi$ and $w''\nVdash^\mc{M}_\mrm{csv}\chi$. By IH, and since $X'\subseteq X\subseteq \mc{I}_{\mc{M}_w}(i)$ for $i$ any of $w', w''$, there are structures $\mc{M}_0$ and $\mc{M}_1$ with: (i) $w'\in W_{\mc{M}_0}$ and $w''\in W_{\mc{M}_1}$, (ii) $\mc{I}_{\mc{M}_0}(w')=X'$ and $\mc{I}_{\mc{M}_w}(w)=X'$, and (iii) $w'\nVdash^{\mc{M}_0}_\mrm{csv}\psi$ and $w''\nVdash^{\mc{M}_1}_\mrm{csv}\chi$. In order to obtain our structure $\mc{M}'$, take the truncated structures ${\mc{M}_0}_{w_0}$ and ${\mc{M}_1}_{w_1}$ and unite them with a root which we label $w$, letting $\mc{I}_{\mc{M}'}(w)=X'$. It is easy to see that the hereditary conditions are met. Finally, by Proposition \ref{truncated-model-csv}, 
    \begin{equation*}
        w'\nVdash^{\mc{M}_0}_\mrm{csv}\psi\Lra w'\nVdash^{\mc{M}'}_\mrm{csv}\psi
    \end{equation*}
    \begin{equation*}
        w''\nVdash^{\mc{M}_1}_\mrm{csv}\chi\Lra w''\nVdash^{\mc{M}'}_\mrm{csv}\chi
    \end{equation*}
    \noindent and so (iii) implies $w\nVdash^{\mc{M}'}_\mrm{csv}\psi\vee\chi$ by definition.
    \item If $\vphi$ is of the form $\psi\wedge\chi$: from $w\nVdash^\mc{M}_\mrm{csv}\psi\wedge\chi$ we can assume, WLOG, $w\nVdash^\mc{M}_\mrm{csv}\psi$. Then the claim follows easily by IH. 
    \item If $\vphi$ is of the form $\psi\ra\chi$: $w\nVdash^\mc{M}_\mrm{csv}\psi\ra\chi$ implies that there are $\mc{M}_0$ and $f$ s.t. $\mc{I}_\mc{M}\leqq_f\mc{I}_{\mc{M}_0}$ and $f(w)\nVdash_{\mrm{G}, \mc{M}_0}\psi\ra\chi$. Evidently, taking $\mc{M}_w$, and restricting $f$ to the worlds in it, $\mc{I}_\mc{M}\leqq_f\mc{I}_{\mc{M}_0}$, and $f(w)\nVdash_{\mrm{G}, \mc{M}_0}\psi\ra\chi$. We then let $\mc{M}'$ be like $\mc{M}_w$ but with $\mc{I}_\mc{M'}(w)=X'$, and we can conclude $w\nVdash^\mc{M'}_{\mrm{csv}}\psi\ra\chi$, since $\mc{I}_\mc{M'}\leqq_f\mc{I}_{\mc{M}_0}$.
\end{itemize}
\end{proof}

\begin{prop}
The following holds:
\begin{itemize}
    \item[i)] $\mc{J}_\mrm{csv}(\cdot)$ is monotonic. Therefore, there are fixed points of $\mc{J}_\mrm{csv}(\cdot)$.
    \item[ii)] There exist non-degenerate fixed-points of  $\mc{J}_\mrm{csv}(\cdot)$.  
\end{itemize}
\end{prop}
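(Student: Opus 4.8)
The plan is to handle the two clauses separately, noting that essentially all of the real content of (i) has already been packaged into Lemma~\ref{monotonicity lemma}, so that monotonicity collapses to a short contrapositive argument strictly parallel to the proof of Proposition~\ref{monotonicity svi operator}.i. For (ii) I would isolate the single fact that $\bot$ is never csv-forced, from which it follows that no fixed point can contain $\#\bot$, and hence that every fixed point is non-degenerate.

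For (i), assume $X\subseteq Y$; I want $\jcsv(X)\subseteq\jcsv(Y)$, which I prove contrapositively. Suppose $\#\vphi\notin\jcsv(Y)$. Unwinding the definition of the jump, there are a structure $\mc{M}$ and a world $w\in W_\mc{M}$ with $\mc{I}_\mc{M}(w)=Y$ and $w\nVdash^{\mc{M}}_{\mrm{csv}}\vphi$. Since $X\subseteq Y$, I would invoke Lemma~\ref{monotonicity lemma} to transport this forcing failure down to the smaller interpretation, obtaining a structure $\mc{M}'$ with a world $w$ such that $\mc{I}_{\mc{M}'}(w)=X$ and $w\nVdash^{\mc{M}'}_{\mrm{csv}}\vphi$. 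This is precisely the direction in which the lemma is used: one replaces the truth set at the failing world by a subset while preserving the failure, and the construction in the lemma keeps the structure persistent exactly because one \emph{shrinks} rather than enlarges the root's truth set. The pair $\mc{M}',w$ then witnesses $\#\vphi\notin\jcsv(X)$, which is the contrapositive. Monotonicity of $\jcsv$ then yields fixed points, including a least one, by the Knaster--Tarski theorem on the complete lattice $\langle\mc{P}(\mrm{SENT}_{\lt}),\subseteq\rangle$.

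For (ii), the fact I would establish first is that $w\nVdash^{\mc{M}}_{\mrm{csv}}\bot$ for every structure $\mc{M}$ and every world $w$. This is immediate from Proposition~\ref{intuitionistic soundness}: csv-forcing implies G-forcing, and clause~1 of Definition~\ref{global forcing} gives $w\nVdash_{\mrm{G},\mc{M}}\bot$. Consequently, for every $X\subseteq\mrm{SENT}_{\lt}$ we have $\#\bot\notin\jcsv(X)$, because the one-world structure whose sole world carries the interpretation $X$ is a legitimate persistent first-order intuitionistic Kripke $\omega$-structure and its world fails to csv-force $\bot$, so the universal condition defining membership in the jump fails. In particular any fixed point $X=\jcsv(X)$ omits $\#\bot$, whence $X\neq\mrm{SENT}_{\lt}$; that is, every fixed point is non-degenerate. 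Combining this with the existence of fixed points from (i) delivers (ii).

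Since the hard work is discharged by Lemma~\ref{monotonicity lemma}, the main obstacle is conceptual rather than computational: one must apply that lemma in the shrinking direction (from the counterexample's interpretation $Y$ down to $X$), which is what both preserves the hereditary condition when the root's truth set is reset and matches the sense in which $\jcsv$ is monotone. The genuinely delicate cases live inside the lemma itself, namely $\vee$ and $\exists$, where the global clauses force one to split a single failing world into several and re-glue the resulting truncated structures beneath a common root via Proposition~\ref{truncated-model-csv}; these are exactly the steps where a naive pointwise argument breaks, and they are why monotonicity is, as the text remarks, harder here than for $\mc{J}_\mrm{svi}$. For (ii) the only point to watch is that ``non-degenerate'' be read as consistency (non-containment of $\#\bot$, equivalently properness of the fixed point), which the soundness-into-G observation secures uniformly for all fixed points at once.
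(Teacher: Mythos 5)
Your part (i) is exactly the paper's argument: the contrapositive, with Lemma~\ref{monotonicity lemma} transporting the forcing failure from the world whose truth set is $Y$ to one whose truth set is $X\subseteq Y$. You are also right that the lemma must be applied in the \emph{shrinking} direction; note that the lemma as printed says ``for all $X'$ with $X\subseteq X'$'', but its own proof (see the inclusion $X'\subseteq X\subseteq \mc{I}_{\mc{M}_w}(i)$ in the disjunction case) and its use in this proposition both require $X'\subseteq X$, precisely because, as you observe, persistence at the re-labelled root survives only when the truth set is shrunk. For part (ii) the paper is terser: it merely notes that any interpretation embeds into itself, so the admissibility quantification is never vacuous. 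Your route through $\#\bot\notin\jcsv(X)$ --- using Proposition~\ref{intuitionistic soundness} together with the one-world witness structure --- makes the same point explicit and in fact yields the slightly stronger conclusion that \emph{every} fixed point is a proper subset of $\mrm{SENT}_{\lt}$. Both arguments are correct; yours costs one extra observation and buys a uniform consistency-style reading of non-degeneracy.
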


\begin{proof}
For i): take $X,Y$ such that $X\subseteq Y$, and assume $\#\vphi\notin \jcsv(Y)$. Then, there is some structure $\mc{M}$ and world $w\in W_\mc{M}$ such that $\mc{I}_\mc{M}(w)=Y$ and $w\nVdash^{\mc{M}}_\mrm{csv}\vphi$. By Lemma \ref{monotonicity lemma}, there is a structure $\mc{M}'$ such that $w\in W_{\mc{M}'}$, $\mc{I}_{\mc{M}'}(w)=X\subseteq Y$, and  $w\nVdash^{\mc{M'}}_\mrm{csv}\vphi$. Hence, $\#\vphi\notin\jcsv (X)$.

For ii): immediate, since any interpretation embeds into itself.
\end{proof}

\noindent Having proved monotonicity, we now want to establish the compositional properties of fixed-points of $\mc{J}_\mrm{csv}$. For that purpose, we first show that a world csv-forces a formula exactly when that formula is csv-forced, by the corresponding world, at all embeddable structures. 

\begin{lemma}\label{lemma1}
For all structures $\mc{M}$, worlds $w$ and formulae $\vphi$:

\begin{center}
    $w\Vdash^{\mc{M}}_\mrm{csv} \vphi \Leftrightarrow \forall \mc{M}', f(\mc{I}_\mc{M}\leqq_f \mc{I}_{\mc{M}'} \Ra f(w)\Vdash^{\mc{M}'}_\mrm{csv} \vphi)$
\end{center}
\end{lemma}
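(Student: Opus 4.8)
The plan is to prove the biconditional by handling the two directions separately. The right-to-left direction is immediate: since the identity function witnesses $\mc{I}_\mc{M}\leqq_\mrm{id}\mc{I}_\mc{M}$ (as already used in the proof of Proposition \ref{intuitionistic soundness}), instantiating the right-hand side with $\mc{M}'=\mc{M}$ and $f=\mrm{id}$ yields $w=\mrm{id}(w)\Vdash^{\mc{M}}_\mrm{csv}\vphi$. The content of the lemma therefore lies in the left-to-right direction, which I would establish by induction on the complexity of $\vphi$; throughout I fix an arbitrary structure $\mc{M}'$ and EI function $f$ with $\mc{I}_\mc{M}\leqq_f\mc{I}_{\mc{M}'}$, aiming to transfer csv-forcing from $w$ to $f(w)$.

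Several cases are routine. For atomic $\vphi$, csv-forcing reduces to G-forcing: if $\vphi$ is arithmetical, G-forcing is evaluated in $\nat$ and is hence insensitive to the world and the structure; while if $\vphi$ is of the form $\T(s)$, clause 3 of Definition \ref{embedding function}, i.e.\ $\mc{I}_\mc{M}(w)\subseteq\mc{I}_{\mc{M}'}(f(w))$, preserves the relevant membership. The cases of $\wedge$ and $\forall$ follow by applying the induction hypothesis componentwise, to the two conjuncts and to every instance $\psi(\bar n)$, and reassembling. The case of $\ra$ does not even require the induction hypothesis: to verify the csv-clause for a conditional at $f(w)$ we must show, for every $\mc{M}''$ and $h$ with $\mc{I}_{\mc{M}'}\leqq_h\mc{I}_{\mc{M}''}$, that $h(f(w))\Vdash_{\mrm{G},\mc{M}''}\psi\ra\chi$; but EI functions compose (a routine check from Definition \ref{embedding function}), so $\mc{I}_\mc{M}\leqq_{h\circ f}\mc{I}_{\mc{M}''}$, and the hypothesis $w\Vdash^{\mc{M}}_\mrm{csv}\psi\ra\chi$ applied to the embedding $h\circ f$ delivers exactly this.

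The genuinely interesting cases are $\vee$ and $\exists$, the two ``global'' connectives, and here the obstacle is that $\mc{M}'$ may contain worlds above $f(w)$ lying outside the image of $f$, so that the induction hypothesis cannot be applied to them directly. I would resolve this using the heredity of csv-forcing proved above. For $\vphi=\psi\vee\chi$, assume without loss of generality that $w'\Vdash^{\mc{M}}_\mrm{csv}\psi$ for all $w'\geq_\mc{M}w$; taking $w'=w$ and applying the induction hypothesis for $\psi$ to the embedding $f$ gives $f(w)\Vdash^{\mc{M}'}_\mrm{csv}\psi$, and heredity then propagates this to every $v\geq_{\mc{M}'}f(w)$, so that $f(w)\Vdash^{\mc{M}'}_\mrm{csv}\psi\vee\chi$ by definition. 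For $\vphi=\exists v\psi$, the assumption applied at $w$ yields a single witness $m$ with $w\Vdash^{\mc{M}}_\mrm{csv}\psi(\bar m)$; the induction hypothesis gives $f(w)\Vdash^{\mc{M}'}_\mrm{csv}\psi(\bar m)$, and heredity makes $m$ a uniform witness at every $v\geq_{\mc{M}'}f(w)$, which is exactly what the global existential clause requires.

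The main obstacle I anticipate is precisely the one flagged in the $\vee$ and $\exists$ cases: reconciling the upward-looking truth conditions of these connectives with the fact that the target structure $\mc{M}'$ need not be covered by the image of the embedding. The argument goes through cleanly because heredity lets us first transfer forcing at the single world $f(w)$, where the induction hypothesis does apply, and then lift it to the whole upward cone of $f(w)$ for free; notably, no appeal to truncated structures (Proposition \ref{truncated-model-csv}) or to the monotonicity machinery of Lemma \ref{monotonicity lemma} is needed. I would nonetheless double-check the bookkeeping that the induction hypothesis is genuinely available for the instances $\psi(\bar n)$ in the quantifier cases, i.e.\ that substituting a numeral for a variable does not raise logical complexity, as is standard for such inductions.
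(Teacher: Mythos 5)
Your proposal is correct and follows essentially the same route as the paper: the right-to-left direction via the identity embedding, and the left-to-right direction by induction on complexity, using composition (transitivity) of EI functions for the atomic and conditional cases and the previously established heredity of csv-forcing to handle the global clauses for $\vee$ and $\exists$. The only (harmless) cosmetic difference is that in the disjunction case you apply the induction hypothesis only at $w$ and then lift by heredity in $\mc{M}'$, whereas the paper first applies it at every $w'\geq_\mc{M} w$ before invoking heredity.
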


\begin{proof}
\textbf{Left-to-right}: by induction on the complexity of $\vphi$. 

\noindent If $\vphi$ is atomic, i.e., of the form $R(t_1,...,t_n)$. The claim follows immediately by the transitivity property of the interpretation-embedding relation.\\
\noindent If $\vphi$ is of the form $\psi\vee\chi$, we can reason as follows:
\begin{align*}
    w\Vdash^\mc{M}_\mrm{csv}\psi\vee\chi & \Ra \forall w'\geq_\mc{M} w(w'\Vdash^\mc{M}_\mrm{csv}\psi) \text{ or } \forall w'\geq_\mc{M} w(w'\Vdash^\mc{M}_\mrm{csv}\chi) & \\
    & \Ra \forall w'\geq_\mc{M} w(\forall \mc{M'}, f (\mc{I}_\mc{M}\leqq_f\mc{I}_\mc{M'} \Ra f(w')\Vdash^\mc{M'}_\mrm{csv}\psi)) \text{ or }& \\ 
    &\forall w'\geq_\mc{M} w(\forall \mc{M'}, f (\mc{I}_\mc{M}\leqq_f\mc{I}_\mc{M'} \Ra f(w')\Vdash^\mc{M'}_\mrm{csv}\chi)) & \text{ by IH} \\
\end{align*}
\noindent Assume WLOG that the first disjunct holds. By hereditariness of the csv-forcing, for any $\mc{M}'$ and $f$ s.t. $\mc{I}_\mc{M}\leqq_f\mc{I}_\mc{M'}$ and any $w'\geq_\mc{M'}f(w)$, $w'\Vdash^\mc{M'}_\mrm{csv}\chi$. Therefore, 
\begin{equation*}
    \forall \mc{M}', f(\mc{I}_\mc{M}\leqq_f \mc{I}_{\mc{M}'} \Ra f(w)\Vdash^{\mc{M}'}_\mrm{csv} \psi\vee\chi).
\end{equation*}
The case of $\vphi$ being of the form $\exists v \psi$ follows similarly. When $\vphi$ is of the form $\psi\wedge\chi$ or $\forall v \psi$, the claim obtains easily by IH.\\
\noindent Finally, we consider the case in which $\vphi$ is of the form $\psi\ra\chi$. Then our assumption yields:
\begin{equation*}
    \forall \mc{M}', f(\mc{I}_\mc{M}\leqq_f\mc{I}_\mc{M'}\Ra f(w)\Vdash_\mc{M'}\psi\ra\chi)
\end{equation*} 
Taking any $\mc{M}'$, $f$ such that $\mc{I}_\mc{M}\leqq_f\mc{I}_\mc{M'}$ we aim to show $f(w)\Vdash^\mc{M'}_\mrm{csv}\psi\ra\chi$. So we take a structure $\mc{N}$ and a function $g$ s.t. $\mc{I}_\mc{M'}\leqq_g\mc{I}_\mc{N}$; we want to establish that $g(f(w))\Vdash_{\mrm{G}, \mc{M}}\psi\ra\chi$. By transitivity of the embedding function, $\mc{I}_\mc{M}\leqq_{g\circ f}\mc{I}_\mc{N}$; so, from our assumption, it follows that $g(f(w))\Vdash_{\mrm{G}, \mc{M}}\psi\ra\chi$.

\textbf{Right-to-left}: Assume $\forall \mc{M}', \forall f(\mc{I}_\mc{M}\leqq_f \mc{I}_{\mc{M}'} \Ra f(w)\Vdash^{\mc{M}'}_\mrm{csv} \vphi)$. Then, since $\mc{I}_\mc{M}\leqq \mc{I}_{\mc{M}}$ trivially via the identity embedding, $w\Vdash^{\mc{M}}_\mrm{csv} \vphi $. 
\end{proof}
\noindent This result suffices to prove the desired compositional properties of our theory. In the case of disjunctions and existentially quantified sentences, they key consists in `zooming-in' to the one-world model, which embeds into any other relevant structure.

\begin{lemma}\label{lemma disjunction}
Let $X=\mc{J}_\mrm{csv}(X)$. Then: $\#\vphi\vee\psi\in X \Ra \#\vphi \in X\vee \#\psi\in X$.
\end{lemma}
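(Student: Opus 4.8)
The plan is to prove the contrapositive of the disjunction property, namely that if $\#\vphi \notin X$ and $\#\psi \notin X$, then $\#\vphi \vee \psi \notin X$. Since $X = \mc{J}_\mrm{csv}(X)$, the assumption $\#\vphi \notin X$ means there is a structure $\mc{M}_0$ and a world $w_0 \in W_{\mc{M}_0}$ with $\mc{I}_{\mc{M}_0}(w_0) = X$ and $w_0 \nVdash^{\mc{M}_0}_\mrm{csv} \vphi$; similarly $\#\psi \notin X$ yields a structure $\mc{M}_1$ and a world $w_1$ with $\mc{I}_{\mc{M}_1}(w_1) = X$ and $w_1 \nVdash^{\mc{M}_1}_\mrm{csv} \psi$. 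The goal is to manufacture a single structure $\mc{M}'$ and a world $w$ with $\mc{I}_{\mc{M}'}(w) = X$ that fails to csv-force $\vphi \vee \psi$, which, by the definition of $\mc{J}_\mrm{csv}$, establishes $\#\vphi\vee\psi \notin X$.

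First I would pass to the truncated structures ${\mc{M}_0}_{w_0}$ and ${\mc{M}_1}_{w_1}$, which by Proposition \ref{truncated-model-csv} preserve the non-forcing of $\vphi$ and $\psi$ respectively at $w_0$ and $w_1$. Then I would build $\mc{M}'$ by taking the disjoint union of these two truncated structures and adjoining a fresh root $w$ below both $w_0$ and $w_1$, setting $\mc{I}_{\mc{M}'}(w) = X$. The persistency (hereditary) condition is satisfied because $X = \mc{I}_{\mc{M}_0}(w_0) = \mc{I}_{\mc{M}_1}(w_1)$ sits at the root and every world above it already contained at least $X$ in its original structure (indeed $X \subseteq \mc{I}_{w'}$ for all $w' \geq_{\mc{M}_0} w_0$ and all $w' \geq_{\mc{M}_1} w_1$ by hereditariness of the original structures). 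Invoking Proposition \ref{truncated-model-csv} once more, the non-forcing facts transfer into $\mc{M}'$: we get $w_0 \nVdash^{\mc{M}'}_\mrm{csv} \vphi$ and $w_1 \nVdash^{\mc{M}'}_\mrm{csv} \psi$.

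The payoff is that these two witnesses sit above the common root $w$ in $\mc{M}'$. By the global (disjunctive) csv-clause, $w \Vdash^{\mc{M}'}_\mrm{csv} \vphi \vee \psi$ would require either $\forall w' \geq_{\mc{M}'} w\,(w' \Vdash^{\mc{M}'}_\mrm{csv} \vphi)$ or $\forall w' \geq_{\mc{M}'} w\,(w' \Vdash^{\mc{M}'}_\mrm{csv} \psi)$. The first disjunct fails because $w_0 \geq_{\mc{M}'} w$ and $w_0 \nVdash^{\mc{M}'}_\mrm{csv} \vphi$; the second fails because $w_1 \geq_{\mc{M}'} w$ and $w_1 \nVdash^{\mc{M}'}_\mrm{csv} \psi$. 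Hence $w \nVdash^{\mc{M}'}_\mrm{csv} \vphi \vee \psi$, and since $\mc{I}_{\mc{M}'}(w) = X$, we conclude $\#\vphi\vee\psi \notin \mc{J}_\mrm{csv}(X) = X$.

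The main obstacle, and the only place that requires genuine care, is verifying that the glued structure $\mc{M}'$ is a legitimate persistent first-order intuitionistic Kripke $\omega$-structure — in particular that the hereditary condition holds at the new root. This is exactly the delicate point that the disjunctive clause was designed around: the root must carry precisely $X$, and one needs $X \subseteq \mc{I}_{\mc{M}'}(w')$ for every $w'$ above it, which follows from hereditariness in the original structures together with $\mc{I}(w_0) = \mc{I}(w_1) = X$. Everything else is a routine appeal to Proposition \ref{truncated-model-csv} and the definition of the jump; no induction on formula complexity is needed here, since Lemma \ref{monotonicity lemma} and the monotonicity machinery have already done the heavy lifting upstream. (The existential-quantifier analogue will presumably run the same way, replacing the binary disjunction clause by its global existential counterpart.)
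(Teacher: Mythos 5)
Your proof is correct, but it takes a genuinely different route from the paper's. The paper argues \emph{directly}: from $\#\vphi\vee\psi\in X$ it instantiates the fixed-point condition at the one-world structure, where the global disjunction clause trivialises to ``$w\Vdash^{\mc{M}}_\mrm{csv}\vphi$ or $w\Vdash^{\mc{M}}_\mrm{csv}\psi$'', and then propagates the surviving disjunct to \emph{every} structure with a world interpreting truth as $X$ by combining Lemma \ref{lemma1} with the fact that the one-world structure embeds (transitively) into any such structure. You instead prove the contrapositive by amalgamation: take countermodels for $\vphi$ and $\psi$, truncate, glue under a fresh root carrying exactly $X$, and read off the failure of the global disjunction clause at the root. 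Your hereditariness check at the root is the right one, and the transfer of non-forcing into the glued structure is a legitimate appeal to Proposition \ref{truncated-model-csv} (applied at $w_0$ and $w_1$ in $\mc{M}'$, whose truncations there coincide with ${\mc{M}_0}_{w_0}$ and ${\mc{M}_1}_{w_1}$). This is in fact the same gluing technique the paper deploys in the $\vee$-cases of Lemma \ref{monotonicity lemma} and Lemma \ref{csv is supervaluational}, so all the ingredients you use are already licensed. What each approach buys: the paper's one-world argument exhibits the single-world fixed-point structure as ``generic'' and avoids any new model construction, at the cost of invoking the embedding-transfer Lemma \ref{lemma1}; your amalgamation argument is the classical disjunction-property proof for intuitionistic Kripke semantics and, as you note, carries over verbatim to Lemma \ref{lemma existential} (glue the countably many countermodels for the instances $\vphi(\bar n)$ under one root), whereas the paper's route there needs the additional observation that the constant domain $\omega$ lets a witness in the one-world model serve as a witness everywhere.
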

\begin{proof}
If $\#\vphi\vee\psi\in X$, then by the fixed-point property of $X$, 

\begin{center}
    $\forall \mc{M}, w (\mc{I}_\mc{M}(w)=X \Ra w\VdashM \vphi\vee\psi)$
\end{center}

Let $\mc{M}$ be the one-world model given by $W_\mc{M}=\{w\}$ and $\mc{I}_\mc{M}=\{\langle w,x\rangle \sth x\in X\}$. Then $w\VdashM \vphi\vee\psi$, so $w\VdashM\vphi$ or $w\VdashM\psi$. Assume WLOG that $w\VdashM\vphi$. We will show $\#\vphi\in X$, for which we require $\forall \mc{M}, w (\mc{I}_\mc{M}(w)=X \Ra w\VdashM \vphi)$. So take $\mc{M}', w'$ s.t. $\mc{I}_{\mc{M}'}(w')=X$, and we want to show  $w'\Vdash^{\mc{M}'}_\mrm{csv} \vphi$. By Lemma \ref{lemma1}, it's enough to prove 

\begin{equation}\label{eqn2}
    \forall \mc{N}, f' (\mc{I}_{\mc{M}'}\leqq_{f'} \mc{I}_\mc{N}\Ra f'(w') \Vdash^{\mc{N}}_\mrm{csv} \vphi)
\end{equation}
Hence, we take an arbitrary $\mc{N}_0$, $f_0$ such that $\mc{I}_{\mc{M}'}\leqq_{f_0} \mc{I}_{\mc{N}_0}$.

First, we note that $\mc{M}$ embeds into $\mc{M}'$ (since it embeds into any structure with a world whose truth-interpretation is greater or equal than $X$); and, by the transitivity of the embedding, there is an embedding function $f_1$ s.t. $\mc{I}_\mc{M}\leqq_{f_1} \mc{I}_{\mc{N}_0}$ and $f_0(w')=f_1(w)$. Since we assumed $w\VdashM\vphi$, Lemma \ref{lemma1} yields $f_0(w')=f_1(w) \Vdash^{\mc{N}}_\mrm{csv} \vphi$. This completes the proof of (\ref{eqn2}), and so $\#\vphi\in X$.
\end{proof}

\noindent For the next lemma, the reasoning is exactly like in the previous lemma. The key is to note that the one-world structure embeds into any other structure, and that the embedding relation is transitive. It's also essential to note that the domain of all worlds across all structures is always $\omega$, so that the witness of an existential formula in the one-world model can also be a witness in any other world of a structure that the one-world structure embeds into. 

\begin{lemma}\label{lemma existential}
Let $X=\jcsv(X)$. Then: $\#\exists v\vphi\in X \Ra \exists n\in \omega \#\vphi(\bar n) \in X$.
\end{lemma}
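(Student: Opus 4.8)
The plan is to mirror the proof of Lemma \ref{lemma disjunction} almost verbatim, the only new ingredient being that the global existential clause collapses to a single witness at the one-world model. First I would unpack the hypothesis: since $\#\exists v\vphi\in X$ and $X=\jcsv(X)$, the fixed-point property gives $w\Vdash^{\mc{M}}_\mrm{csv}\exists v\vphi$ for every structure $\mc{M}$ and every world $w$ with $\mc{I}_\mc{M}(w)=X$. I then specialize to the one-world structure $\mc{M}$ determined by $W_\mc{M}=\{w\}$ and $\mc{I}_\mc{M}=\{\langle w,x\rangle\sth x\in X\}$. Here the only world $\geq_\mc{M} w$ is $w$ itself, so the global clause for $\exists$ degenerates: from $w\Vdash^{\mc{M}}_\mrm{csv}\exists v\vphi$ we obtain some $n\in\omega$ with $w\Vdash^{\mc{M}}_\mrm{csv}\vphi(\bar n)$. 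I fix this $n$; the goal then reduces to showing $\#\vphi(\bar n)\in X$.

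To establish $\#\vphi(\bar n)\in X$, I appeal once more to $X=\jcsv(X)$, so it suffices to prove $w'\Vdash^{\mc{M}'}_\mrm{csv}\vphi(\bar n)$ for every $\mc{M}'$ and $w'$ with $\mc{I}_{\mc{M}'}(w')=X$. By Lemma \ref{lemma1}, this further reduces to showing $f_0(w')\Vdash^{\mc{N}_0}_\mrm{csv}\vphi(\bar n)$ for every structure $\mc{N}_0$ and embedding $f_0$ with $\mc{I}_{\mc{M}'}\leqq_{f_0}\mc{I}_{\mc{N}_0}$. Now the one-world structure $\mc{M}$ embeds into $\mc{M}'$, since it embeds into any structure possessing a world whose truth-interpretation contains $X$; composing with $f_0$ and using the transitivity of the embedding relation, I obtain an embedding $f_1$ with $\mc{I}_\mc{M}\leqq_{f_1}\mc{I}_{\mc{N}_0}$ and $f_1(w)=f_0(w')$. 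Since $w\Vdash^{\mc{M}}_\mrm{csv}\vphi(\bar n)$, Lemma \ref{lemma1} transfers this along $f_1$ to yield $f_0(w')=f_1(w)\Vdash^{\mc{N}_0}_\mrm{csv}\vphi(\bar n)$, which closes the argument.

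The one genuinely new point, and the step I would watch most carefully, is the collapse of the existential clause together with the uniformity of the witness across embeddings. Because every structure under consideration is an $\omega$-structure with constant domain $\omega$, the numeral $\bar n$ denotes a legitimate element at every world of every structure, so the single witness extracted at the one-world model $\mc{M}$ remains a correct witness for $\vphi$ after embedding into $\mc{N}_0$; no world-relative reselection of the witness is required. Were the domains allowed to vary, this transfer could fail, which is precisely why the restriction to $\omega$-structures is what makes the existence property go through. Everything else is a transcription of the disjunction argument, with the sentence $\vphi(\bar n)$ playing the role of the chosen disjunct.
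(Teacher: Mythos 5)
Your proposal is correct and follows essentially the same route the paper indicates: the paper gives no separate proof for this lemma but states that the argument is exactly that of Lemma \ref{lemma disjunction}, relying on the one-world structure embedding into any structure with a world whose truth interpretation contains $X$, the transitivity of embeddings, and the constant domain $\omega$ making the single witness uniform across all structures. You have filled in precisely those steps, including the correct observation that the csv-clause for $\exists$ collapses to a single witness at the one-world model.
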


\noindent Although not strictly compositional, the following property is often seen as an extension of compositionality for the truth predicate:

\begin{lemma}\label{T-rep and T-Del}
Let $X=\jcsv(X)$. Then: $\#\T\ulcorner\vphi\urcorner \in X \Leftrightarrow \#\vphi\in X$.
\end{lemma}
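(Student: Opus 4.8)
The plan is to exploit that $\T\corn\vphi$ is an \emph{atomic} sentence of $\lt$, so that its csv-forcing is governed entirely by clause~1 of the definition of $\Vdash_\mrm{csv}$, which reduces it to global forcing and hence to membership of $\#\vphi$ in the local interpretation. Concretely, I would first isolate the auxiliary equivalence
\[
 w\Vdash^{\mc{M}}_\mrm{csv}\T\corn\vphi \;\Lra\; \#\vphi\in\mc{I}_\mc{M}(w),
\]
valid for every structure $\mc{M}$ and world $w$. For the forward direction I would use the identity embedding $\mc{I}_\mc{M}\leqq_\mrm{id}\mc{I}_\mc{M}$ together with the global-forcing clause for the truth predicate (which, on $\omega$-structures, is just clause~3 of Definition~\ref{forcing relation}): this yields $w\Vdash_{\mrm{G},\mc{M}}\T\corn\vphi$, i.e.\ $\corn\vphi^\nat=\#\vphi\in\mc{I}_\mc{M}(w)$. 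For the converse, if $\#\vphi\in\mc{I}_\mc{M}(w)$ then clause~3 of the EI-function definition (Definition~\ref{embedding function}) propagates $\#\vphi$ to $\mc{I}_{\mc{M}'}(f(w))$ along any embedding $\mc{I}_\mc{M}\leqq_f\mc{I}_{\mc{M}'}$, so $f(w)\Vdash_{\mrm{G},\mc{M}'}\T\corn\vphi$, and the (supervaluational reading of the) atomic clause gives $w\Vdash^{\mc{M}}_\mrm{csv}\T\corn\vphi$.

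With this equivalence in hand the lemma follows immediately from the fixed-point hypothesis $X=\jcsv(X)$. For $(\Leftarrow)$, assuming $\#\vphi\in X$, I would take an arbitrary $\mc{M},w$ with $\mc{I}_\mc{M}(w)=X$; the equivalence gives $w\Vdash^{\mc{M}}_\mrm{csv}\T\corn\vphi$, and since $\mc{M},w$ were arbitrary we conclude $\#\T\corn\vphi\in\jcsv(X)=X$. For $(\Rightarrow)$, assuming $\#\T\corn\vphi\in X=\jcsv(X)$, I would choose any $\mc{M},w$ with $\mc{I}_\mc{M}(w)=X$ (for instance the one-world structure, which always exists), obtain $w\Vdash^{\mc{M}}_\mrm{csv}\T\corn\vphi$ from the fixed-point property, and read off $\#\vphi\in\mc{I}_\mc{M}(w)=X$ via the equivalence.

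The point worth stressing --- and the only place where any care is needed --- is that, in contrast to the compositional properties for $\vee$ and $\exists$ (Lemmas~\ref{lemma disjunction} and~\ref{lemma existential}), no ``zooming in'' to the one-world model and no appeal to Lemma~\ref{lemma1} is required here. Because the truth predicate is atomic, its forcing is purely local: it depends only on $\mc{I}_\mc{M}(w)$ and is insensitive both to the accessible worlds and to the supervaluational quantification over embeddings, the two readings of clause~1 coinciding exactly because atomic forcing is monotone under EI functions and the identity embedding is always available. Thus transparency for $\T$ drops out directly from the fixed-point equation, and the substantive content reduces to verifying the harmless auxiliary equivalence displayed above.
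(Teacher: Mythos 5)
Your proposal is correct and follows essentially the same route as the paper, which disposes of the lemma in one line by appealing to the fixed-point property together with the atomic clause of $\Vdash^{\mc{M}}_\mrm{csv}$ (which for the atomic sentence $\T\corn\vphi$ reduces to global forcing and hence to $\#\vphi\in\mc{I}_\mc{M}(w)$). Your write-up merely makes explicit the auxiliary equivalence and the role of the identity embedding that the paper leaves implicit.
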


\begin{proof}
In both directions, the claim follows from the fixed-point property and the clause for atomic sentences of the relation $\VdashM$.
\end{proof}

\noindent Summing up:

\begin{thm}\label{compositional clauses}
Let $X=\jcsv(X)$. Let $\mc{M}$ be a structure and $w$ a world in $W_\mc{M}$ such that $\mc{I}_\mc{M}(w)=X$. Then:

\begin{enumerate}
    \item $w\Vdash_{\mrm{G},\mc{M}} \forall x,y (\mrm{Sent}_{\lt}(x\subdot \vee y) \ra (\T (x\subdot \vee y)\leftrightarrow \T x\vee \T y))$
    \item $w\Vdash_{\mrm{G},\mc{M}} \forall x,y (\mrm{Sent}_{\lt}(x\subdot \wedge y)\ra (\T (x\subdot \wedge y)\leftrightarrow \T x\wedge \T y))$
     \item $w\Vdash_{\mrm{G},\mc{M}} \forall x,v (\mrm{Sent}_{\lt}(\exists v x)\ra (\T (\subdot \exists v x)\leftrightarrow \exists t \T x(t/v)))$
     \item $w\Vdash_{\mrm{G},\mc{M}} \forall x,v (\mrm{Sent}_{\lt}(\forall v x)\ra (\T (\subdot \forall v x)\leftrightarrow \forall t \T x(t/v)))$
     \item $w\Vdash_{\mrm{G},\mc{M}} \forall t(\T \subdot \T t\leftrightarrow \T t^\circ )$
    
\end{enumerate}
\end{thm}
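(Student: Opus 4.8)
The plan is to read each of the five clauses for what it is: the G-forcing, at $w$, of a $\mrm{Sent}_{\lt}$-guarded universal biconditional, and then to peel off the outer logical structure until only the set-level lemmas already proved remain. First I would use the remark that in persistent $\omega$-structures the universal clause collapses to all numerals, so it suffices to G-force each instance $\mrm{Sent}_{\lt}(\bar n\,\subdot\!\vee\,\bar m)\ra(\T(\bar n\,\subdot\!\vee\,\bar m)\lra \T\bar n\vee\T\bar m)$, and likewise for the other connectives. The antecedent $\mrm{Sent}_{\lt}(\cdots)$ is arithmetical, hence decided in $\nat$ by clause~2 of Definition~\ref{forcing relation}; when it is false the conditional is forced vacuously, so I may assume $\bar n=\corn{\vphi}$, $\bar m=\corn{\psi}$ with $\vphi\,\subdot\!\vee\,\psi$ a genuine sentence. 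The biconditional then unpacks, through the $\wedge$- and $\ra$-clauses of G-forcing, into two implications each of which must be checked at every $w'\geq_\mc{M} w$. The only arithmetical fact I need about these worlds is the $\T$-clause (Definition~\ref{forcing relation}, clause~3): $w'\Vdash_{\mrm{G},\mc{M}}\T(\bar n)$ iff $n\in\mc{I}_{\mc{M}}(w')$. Crucially, the argument is carried out with $\mc{M}$ a fixed-point structure of CSV, so that each $\mc{I}_\mc{M}(w')$ with $w'\geq w$ is itself a fixed point of $\jcsv$; this is exactly the hypothesis under which the set-level lemmas can be invoked worldwise (and a simple two-world example with a non-fixed-point interpretation above $w$ shows some such requirement is unavoidable).

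The substantive forward directions are then precisely the lemmas. For clause~1, fix $w'\geq w$ with $w'\Vdash_{\mrm{G},\mc{M}}\T(\#(\vphi\vee\psi))$, i.e. $\#(\vphi\vee\psi)\in\mc{I}_\mc{M}(w')$; since this set is a fixed point, Lemma~\ref{lemma disjunction} hands me $\#\vphi\in\mc{I}_\mc{M}(w')$ or $\#\psi\in\mc{I}_\mc{M}(w')$, and persistence of $\mc{I}_\mc{M}$ pushes the chosen disjunct into $\mc{I}_\mc{M}(w'')$ for \emph{all} $w''\geq w'$, which is exactly what the global $\vee$-clause of G-forcing demands of $w'\Vdash_{\mrm{G},\mc{M}}\T\bar n\vee\T\bar m$. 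Clause~3 runs identically: $\#\exists v\,\vphi\in\mc{I}_\mc{M}(w')$ yields, by Lemma~\ref{lemma existential}, some $n\in\omega$ with $\#\vphi(\bar n)\in\mc{I}_\mc{M}(w')$, and because the domain of every world is $\omega$ the same witness $\bar n$ serves at all $w''\geq w'$ by persistence, matching the global $\exists$-clause. Clause~5 is a direct reading of Lemma~\ref{T-rep and T-Del} off the $\T$-clause: $w'\Vdash_{\mrm{G},\mc{M}}\T\subdot\T t$ iff $\#\T\corn{\chi}\in\mc{I}_\mc{M}(w')$ (for $\chi$ the sentence coded by $t^\circ$), which by the lemma is equivalent to $\#\chi\in\mc{I}_\mc{M}(w')$, i.e. to $w'\Vdash_{\mrm{G},\mc{M}}\T t^\circ$.

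The remaining directions are routine. The converse of clause~1 (respectively~3) needs only $\#\vphi\in\mc{I}_\mc{M}(w')\Ra\#(\vphi\vee\psi)\in\mc{I}_\mc{M}(w')$ (respectively $\#\vphi(\bar n)\in\mc{I}_\mc{M}(w')\Ra\#\exists v\,\vphi\in\mc{I}_\mc{M}(w')$), which holds because the fixed point $\mc{I}_\mc{M}(w')$ contains every intuitionistic validity and is closed under the $\vee$- and $\exists$-introduction rules valid in IPC (the analogue of Proposition~\ref{svi-intuitionistic-validities} for $\jcsv$). Clauses~2 and~4, for conjunction and the universal quantifier, are symmetric in both directions and follow immediately from the corresponding G-forcing clauses for $\wedge$ and $\forall$ together with the $\T$-clause, since neither clause is ``global'' and no disjunction/existence property is required. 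The hard part, and the reason the whole global-forcing apparatus of this section was set up, is the \emph{forward} directions of clauses~1 and~3: ordinary membership $\#(\vphi\vee\psi)\in\mc{I}_\mc{M}(w')$ is far weaker than the global demand that one fixed disjunct hold at \emph{every} world above $w'$, and it is exactly the disjunction and existence properties of the fixed points (Lemmas~\ref{lemma disjunction} and~\ref{lemma existential}), combined with persistence, that bridge this gap. Everything else is bookkeeping over the reduction in the first paragraph.
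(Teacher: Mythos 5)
Your proof is correct and follows essentially the same route as the paper's: each clause is reduced to a membership biconditional about the fixed point, with the substantive forward directions of clauses 1 and 3 supplied by Lemma \ref{lemma disjunction} and Lemma \ref{lemma existential}, clause 5 by Lemma \ref{T-rep and T-Del}, and the remaining directions read off the csv-forcing clauses (the paper phrases your ``closure under $\vee$- and $\exists$-introduction'' as ``immediate by the definition of $\Vdash^{\mc{M}}_\mrm{csv}$''; both come to the same thing via hereditariness of csv-forcing). The one point where you go beyond the paper is worth flagging explicitly: you are right that the reduction needs $\mc{I}_\mc{M}(w')$ to be a fixed point for \emph{every} $w'\geq_\mc{M} w$, because the G-forcing clause for $\ra$ at $w$ quantifies over all accessible worlds. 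The theorem as stated only assumes $\mc{I}_\mc{M}(w)=X$, and under that literal hypothesis clause 1 can fail --- e.g.\ a two-world structure with $\mc{I}_\mc{M}(w)=X$ and $\mc{I}_\mc{M}(w')=X\cup\{\#(\lambda\vee\neg\lambda)\}$ for $w'>w$ refutes it, exactly as you indicate. The paper's proof silently performs the same world-by-world reduction and the theorem is only ever invoked for fixed-point models of $\mrm{CSV}$ (Theorem \ref{axioms of CSV}), so your strengthened hypothesis matches the intended use; but your version is the one that is actually true as stated.
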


\begin{proof}

For 1: one needs to show $\#\vphi\vee\psi\in X \Leftrightarrow (\#\vphi\in X \vee \#\psi\in X)$. The left-to-right direction is Lemma \ref{lemma disjunction}; the right-to-left is immediate by the definition of $\VdashM$.

\noindent For 2:  one needs to show $\#\vphi\wedge\psi\in X \Leftrightarrow (\#\vphi\in X \wedge \#\psi\in X)$, but both directions are immediate by the definition of $\VdashM$.

\noindent For 3: one needs to show $\#\exists v\vphi\in X \Leftrightarrow \exists n\in \omega \#\vphi(\bar n) \in X$. The left-to-right direction is provided by Lemma \ref{lemma existential}; the right-to-left is immediate by the definition of $\VdashM$.

\noindent For 4:  one needs to show $\#\forall v\vphi\in X \Leftrightarrow \forall n\in \omega \#\vphi(\bar n) \in X$, but both directions are immediate by the definition of $\VdashM$.

\noindent For 5: one needs to show $\#\T\ulcorner\vphi\urcorner \in X \Leftrightarrow \#\vphi\in X$, which is just Lemma \ref{T-rep and T-Del}.

\end{proof}

\noindent One connective is not considered above: the conditional. Although full compositionality for it cannot be expected, one direction thereof (namely, the possibility of distributing truth over the conditional) does obtain. But before proving it, we shall establish the supervaluational character of our theory. This will allow us to show, among others, that HA `lives' inside our supervaluational theory. 

\begin{lemma}\label{csv is supervaluational}
Let $X\subseteq \mrm{SENT}_{\lt}$. Then $\forall \mc{M},w(\mc{I}_\mc{M}(w)=X\Ra w\Vdash_{\mrm{G},\mc{M}}\vphi)$ implies $\#\vphi \in \mc{J}_\mrm{csv}(X)$.
\end{lemma}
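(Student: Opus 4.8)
The plan is to prove, for the fixed set $X$, the implication $\forall \mc{M},w\,(\mc{I}_\mc{M}(w)=X\Ra w\Vdash_{\mrm{G},\mc{M}}\vphi)\Ra \#\vphi\in\jcsv(X)$ by induction on the complexity of $\vphi$. The guiding observation is that the csv-clauses and the $\mrm{G}$-clauses coincide for every connective except the conditional, while $w\Vdash^\mc{M}_\mrm{csv}\vphi$ always implies $w\Vdash_{\mrm{G},\mc{M}}\vphi$ by Proposition \ref{intuitionistic soundness}; hence the real task is to \emph{upgrade} $\mrm{G}$-forcing to csv-forcing, exploiting that, by assumption, $\vphi$ is $\mrm{G}$-forced at \emph{every} world whose interpretation equals $X$, across all structures. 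Throughout I will use two auxiliary facts: the hereditariness of csv-forcing established above, and the locality of $\mrm{G}$-forcing, i.e.\ that $w\Vdash_{\mrm{G},\mc{M}}\vphi$ depends only on the cone of worlds above $w$ (every $\mrm{G}$-clause recurses upward only, so this is the $\mrm{G}$-analogue of Proposition \ref{truncated-model-csv}). I will also reuse the ``attach a fresh root'' construction from the proof of Proposition \ref{svi-intersection}: given a world $u$ with $\mc{I}(u)\supseteq X$, one obtains a new persistent $\omega$-structure by taking the truncation at $u$ and adding a fresh root $r$ below $u$ with $\mc{I}(r)=X$, persistence being guaranteed exactly because $X\subseteq\mc{I}(u)$.

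The local cases are routine. For atomic $\vphi$ (including $\T(s)$) the csv- and $\mrm{G}$-clauses are identical by definition, so the assumption gives the conclusion at once. For $\vphi=\psi\wedge\chi$ and $\vphi=\forall v\,\psi$ the $\mrm{G}$-clauses are local at $w$, so the assumption for $\vphi$ yields the assumption for each immediate subformula (at the same $X$); the induction hypothesis places each subformula in $\jcsv(X)$, and the csv-clause reassembles $\vphi$ at any world with interpretation $X$.

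For the conditional $\vphi=\psi\ra\chi$ no induction hypothesis is needed. Fixing a world $w$ with $\mc{I}_\mc{M}(w)=X$ and an embedding $\mc{I}_\mc{M}\leqq_f\mc{I}_{\mc{M}'}$, I must show $f(w)\Vdash_{\mrm{G},\mc{M}'}\psi\ra\chi$, so I take $u\geq f(w)$ with $u\Vdash_{\mrm{G},\mc{M}'}\psi$. Since $\mc{I}_{\mc{M}'}(u)\supseteq X$, I may attach a fresh root $r$ with $\mc{I}(r)=X$ below $u$, forming an admissible structure $\mc{N}$ in which $r$ is a world with interpretation $X$; the assumption then gives $r\Vdash_{\mrm{G},\mc{N}}\psi\ra\chi$. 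Instantiating the $\mrm{G}$-clause at $u\geq r$ and transferring $\psi$ and $\chi$ between $\mc{N}$ and $\mc{M}'$ by locality (their cones above $u$ coincide) yields $u\Vdash_{\mrm{G},\mc{M}'}\chi$, which is what the conditional clause requires.

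The genuinely global connectives $\vee$ and $\exists$ are the main obstacle, and this is exactly where a naive induction breaks: both the csv- and the $\mrm{G}$-clauses quantify over worlds $w'\geq w$, but such $w'$ carry interpretations $\supseteq X$, so the induction hypothesis --- stated at level $X$ --- cannot be applied to them directly. The remedy is to prove a sharper fact. For $\vphi=\psi\vee\chi$ I show that $\#\psi\in\jcsv(X)$ or $\#\chi\in\jcsv(X)$; granting this, the hereditariness of csv-forcing immediately delivers $w\Vdash^\mc{M}_\mrm{csv}\psi\vee\chi$ at every $X$-world. To prove the sharper fact I argue by contradiction: if neither disjunct lies in $\jcsv(X)$, the contrapositive of the induction hypothesis supplies worlds $w_1,w_2$ (in structures $\mc{M}_1,\mc{M}_2$) with $\mc{I}(w_i)=X$, $w_1\nVdash_{\mrm{G},\mc{M}_1}\psi$ and $w_2\nVdash_{\mrm{G},\mc{M}_2}\chi$; merging the truncations at $w_1$ and $w_2$ under a common fresh root $r$ with $\mc{I}(r)=X$ and applying the assumption gives $r\Vdash_{\mrm{G}}\psi\vee\chi$, i.e.\ one disjunct is $\mrm{G}$-forced at \emph{all} worlds above $r$ --- contradicting the choice of $w_1$ or of $w_2$ via locality of $\mrm{G}$-forcing. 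The case $\vphi=\exists v\,\psi$ runs in the same spirit: I show $\#\psi(\bar m)\in\jcsv(X)$ for some $m$; otherwise I pick, for each $n$, a world $w_n$ with $\mc{I}(w_n)=X$ and $w_n\nVdash_{\mrm{G}}\psi(\bar n)$, merge all the $w_n$ under a fresh root $r$ with $\mc{I}(r)=X$, and instantiate the global clause $r\Vdash_{\mrm{G}}\exists v\,\psi$ at $w'=r$ itself, obtaining a single witness $m$ with $r\Vdash_{\mrm{G}}\psi(\bar m)$; hereditariness of $\mrm{G}$-forcing pushes this up to $w_m\geq r$, contradicting its choice. The expected difficulty is entirely concentrated here: the point is to recognise that the interpretation mismatch must be resolved not by strengthening the induction hypothesis but by the merge-under-a-common-root construction, which pins the interpretation back to $X$ and uses crucially that the assumption ranges over \emph{all} structures.
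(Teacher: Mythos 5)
Your proof is correct. It relies on the same essential machinery as the paper's --- truncation, attaching a fresh root with interpretation $X$ (legitimate because every world above carries an interpretation $\supseteq X$), merging several truncated structures under a common root, and the upward locality of $\Vdash_{\mrm{G}}$ --- but it runs the induction in the opposite direction. The paper establishes the contrapositive throughout: from $\#\vphi\notin\jcsv(X)$ it extracts a csv-countermodel and converts it, case by case, into a $\mrm{G}$-countermodel at an $X$-world; in the $\vee$ case the upper worlds $w',w''$ witnessing the failure carry interpretations $X',X''\supseteq X$, and the paper simply applies the induction hypothesis \emph{at those larger sets} (the statement being universally quantified over $X$) before merging under a root with interpretation $X$. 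You instead keep the argument direct and insist on applying the induction hypothesis at the level of $X$ itself, which forces you to prove the sharper intermediate claims that the hypothesis for $\psi\vee\chi$ (resp.\ $\exists v\,\psi$) already yields $\#\psi\in\jcsv(X)$ or $\#\chi\in\jcsv(X)$ (resp.\ a uniform witness $\#\psi(\bar m)\in\jcsv(X)$); you prove these by reductio using the same merge-under-a-common-root construction, and then reassemble the compound formula via hereditariness of csv-forcing. Both routes are sound: the paper's is slightly leaner, since quantifying the induction hypothesis over all $X$ dissolves the interpretation mismatch you flag as the central difficulty, while yours has the side benefit of isolating, at the level of the jump itself, the disjunction and existence properties that the paper records only for fixed points (Lemmas \ref{lemma disjunction} and \ref{lemma existential}). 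Your treatment of the conditional --- no induction hypothesis, attach a fresh $X$-root below the world at which the antecedent is forced, transfer by locality --- matches the paper's exactly, modulo contraposition, as does your countable merge in the existential case combined with hereditariness of $\mrm{G}$-forcing.
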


\begin{proof}
We establish the contrapositive. Assume $\#\vphi\notin \mc{J}_\mrm{csv}(X)$.  We proceed by induction on the complexity of $\vphi$. The base case is immediate. 
\noindent For the inductive step, consider first the case in which $\vphi$ is of the form $\psi\vee\chi$. If $\#\psi\vee\chi\notin \mc{J}_\mrm{csv}(X)$, there is a structure $\mc{M}$ and a world $w$ with $\mc{I}_\mc{M}(w)=X$ and $w\nVdash^\mc{M}_\mrm{csv}\psi\vee\chi$. Hence, there are $w'\geq_\mc{M} w$ and $w''\geq_\mc{M} w$ s.t. $w'\nVdash^\mc{M}_\mrm{csv}\psi$ and $w''\nVdash^\mc{M}_\mrm{csv}\chi$. Suppose $\mc{I}_\mc{M}(w')=X'$ and $\mc{I}_\mc{M}(w'')=X''$. By IH, there are $\mc{M'}$ and $\mc{M}''$ with $w'\in W_\mc{M'}$ and $w''\in W_\mc{M''}$, and $\mc{I}_\mc{M'}(w')=X'$ and $\mc{I}_\mc{M''}(w'')=X''$, and $w'\nVdash_{\mrm{G}, \mc{M'}}\psi$ and $w''\nVdash_{\mrm{G}, \mc{M''}}\chi$. Build a new structure $\mc{N}$ by taking $\mc{M}'_{w'}$ and $\mc{M}'_{w''}$ and uniting them with a root $w$ s.t. $\mc{I}_\mc{N}(w)=X$ (which we can do because $X', X''\supseteq X$). Then, by the clauses for global forcing, $w\nVdash_{\mrm{G}, \mc{N}}\psi\vee\chi$.

Take now the case in which $\vphi$ is of the form $\psi\ra\chi$. $\#\psi\ra\chi\notin \mc{J}_\mrm{csv}(X)$ implies that there is a structure $\mc{M}$ and a world $w$ with $\mc{I}_\mc{M}(w)=X$ and $w\nVdash^\mc{M}_\mrm{csv}\psi\ra\chi$. Hence, there is a structure $\mc{M}'$ and a function $f$ s.t. $\mc{I}_\mc{M}\leqq_f\mc{I}_\mc{M'}$ and $f(w)\nVdash_{\mrm{G},\mc{M'}}\psi\ra\chi$. We now build a new structure, $\mc{N}$, which takes $\mc{M}_{f(w)}$ and adds a root, $w$, with $\mc{I}_\mc{N}(w)=X$. This structure meets the hereditary conditions and, clearly, $w\nVdash_{\mrm{G},\mc{N}}\psi\ra\chi$. 

The case of the existential quantifier is similar to that of disjunction, and the cases for conjunction and the universal quantifier follow easily by IH. 
\end{proof}

\noindent Since all worlds of all structures force the axioms of HAT, we obtain a first, important result that hints at the real supervaluational character of this theory:

\begin{prop}\label{fp models are supervaluational}
Let $X\subseteq \mrm{SENT}_{\lt}$. Let $\mc{M}$ be a structure and $w$ a world in $W_\mc{M}$ such that $\mc{I}_\mc{M}(w)=\jcsv(X)$. Then:
\begin{center}
    $w\Vdash_{\mrm{G},\mc{M}} \forall x(\mrm{Ax}_{\mrm{HAT}}(x)\rightarrow \T (x))$
\end{center}

\end{prop}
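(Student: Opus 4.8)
The plan is to peel off the outer quantifier and conditional using the global forcing clauses, reduce the goal to a membership fact about $\jcsv(X)$, and then invoke Lemma \ref{csv is supervaluational} together with the observation that every axiom of $\hatt$ is G-forced at every world of every $\omega$-structure. First I would unfold the target formula: by the $\forall$-clause (in the simplified form noted in the Remark) and the $\ra$-clause of Definition \ref{global forcing}, the statement $w\Vdash_{\mrm{G},\mc{M}}\forall x(\mrm{Ax}_{\mrm{HAT}}(x)\ra\T(x))$ amounts to requiring, for every $n\in\omega$ and every $w'\geq_\mc{M}w$, that $w'\Vdash_{\mrm{G},\mc{M}}\mrm{Ax}_{\mrm{HAT}}(\bar n)$ implies $w'\Vdash_{\mrm{G},\mc{M}}\T(\bar n)$.

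Since $\mrm{Ax}_{\mrm{HAT}}$ is a primitive recursive predicate, $\mrm{Ax}_{\mrm{HAT}}(\bar n)$ is an arithmetical sentence, and a routine induction on arithmetical sentences shows that G-forcing of such sentences is world-independent and coincides with truth in $\nat$; hence $w'\Vdash_{\mrm{G},\mc{M}}\mrm{Ax}_{\mrm{HAT}}(\bar n)$ holds exactly when $n$ is the code of an axiom $\vphi$ of $\hatt$. So I may assume $n=\#\vphi$ for such a $\vphi$, and it remains to establish $w'\Vdash_{\mrm{G},\mc{M}}\T(\bar n)$, i.e.\ $n\in\mc{I}_{w'}$. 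Because $w'\geq_\mc{M}w$ and the structure is persistent, $\mc{I}_w\subseteq\mc{I}_{w'}$, so it suffices to show $n=\#\vphi\in\mc{I}_w=\jcsv(X)$. By Lemma \ref{csv is supervaluational}, this follows once I show
\begin{equation*}
\forall\,\mc{N},v\,(\mc{I}_\mc{N}(v)=X\Ra v\Vdash_{\mrm{G},\mc{N}}\vphi),
\end{equation*}
and for this I would prove the stronger claim that every axiom $\vphi$ of $\hatt$ is G-forced at \emph{every} world of every persistent first-order intuitionistic Kripke $\omega$-structure.

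To establish that stronger claim, I would split into the arithmetical axioms and the induction axioms. For the purely arithmetical axioms of $\mrm{HA}$, G-forcing at any world reduces to truth in $\nat$ by the arithmetical lemma above, and these axioms are true in $\nat$. For the induction axioms of $\hatt$ — which may contain $\T$ and so are not covered by that lemma — I would argue by external induction: fixing a world $v'\geq v$ that G-forces $\psi(\bar 0)$ and $\forall x(\psi(x)\ra\psi(S x))$, the latter yields, via the $\ra$-clause instantiated at $v'$ itself, that $v'\Vdash_{\mrm{G}}\psi(\bar n)$ implies $v'\Vdash_{\mrm{G}}\psi(\overline{n+1})$ for each $n$; a metatheoretic induction on $n$ then gives $v'\Vdash_{\mrm{G}}\psi(\bar n)$ for all $n$, that is $v'\Vdash_{\mrm{G}}\forall x\,\psi(x)$, as required.

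The main obstacle is precisely this verification that G-forcing validates $\hatt$: one must check that the ``global'' clauses for $\vee$ and $\exists$ disturb neither the arithmetical base case nor the induction scheme. I expect both checks to go through — the former because arithmetical forcing is world-independent, so the global disjunction and existential clauses collapse to their classical readings over $\nat$, and the latter because the forcing clause for the conditional licenses instantiating the inductive premise at the very world under consideration, which is all that the standard meta-induction requires. Once these two points are in hand, the chain of reductions above closes the argument.
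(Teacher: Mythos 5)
Your proposal is correct and follows essentially the same route as the paper, which justifies the proposition by observing that every axiom of $\mathrm{HAT}$ is G-forced at every world of every structure and then invoking Lemma \ref{csv is supervaluational} to conclude that the codes of these axioms lie in $\mc{J}_\mrm{csv}(X)$, hence in $\mc{I}_\mc{M}(w)$ and (by persistence) in $\mc{I}_{w'}$ for all $w'\geq_\mc{M} w$. Your additional verifications --- that arithmetical sentences are forced world-independently as in $\nat$, and that the $\lt$-induction axioms are handled by an external induction using reflexivity of $\leq_\mc{M}$ in the conditional clause --- are exactly the details the paper leaves implicit, and they go through.
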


\noindent This being said, there are further important consequences of Lemma \ref{csv is supervaluational}. First: 

\begin{corollary}\label{csv iff classical}
For any $X\subseteq \mrm{SENT}_{\lt}$ and any formula $\vphi$:
\begin{center}
$\forall \mc{M},w(\mc{I}_\mc{M}(w)=X\Ra w\Vdash_{\mrm{G},\mc{M}}\vphi)$ iff $\#\vphi \in \mc{J}_\mrm{csv}(X)$.
\end{center}
\end{corollary}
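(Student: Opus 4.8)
The plan is to recognize that this corollary is really a packaging of two results already in hand, split along the two directions of the biconditional. Writing $\Phi$ for the statement $\forall \mc{M},w(\mc{I}_\mc{M}(w)=X\Ra w\Vdash_{\mrm{G},\mc{M}}\vphi)$ and $\Psi$ for the statement $\#\vphi \in \mc{J}_\mrm{csv}(X)$, the goal is exactly $\Phi \Leftrightarrow \Psi$.

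For the direction $\Phi \Rightarrow \Psi$, there is nothing new to do: this is precisely the content of Lemma \ref{csv is supervaluational}, which was proved by induction on the complexity of $\vphi$ (the delicate cases being $\vee$ and $\ra$, handled by the truncation-and-new-root constructions). So I would simply cite that lemma.

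For the converse $\Psi \Rightarrow \Phi$, I would unfold the definition of the jump. By Definition \ref{jump operator} (adapted to $\jcsv$), the assumption $\#\vphi \in \mc{J}_\mrm{csv}(X)$ means $\forall \mc{M}, \forall w\in W_\mc{M}(\mc{I}_{\mc{M}}(w)=X \Rightarrow w\Vdash^{\mc{M}}_\mrm{csv}\vphi)$. Now fix an arbitrary $\mc{M}$ and $w\in W_\mc{M}$ with $\mc{I}_\mc{M}(w)=X$; the assumption gives $w\Vdash^{\mc{M}}_\mrm{csv}\vphi$, and then Proposition \ref{intuitionistic soundness} (csv-forcing implies $\mrm{G}$-forcing) yields $w\Vdash_{\mrm{G},\mc{M}}\vphi$. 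Since $\mc{M}$ and $w$ were arbitrary among worlds interpreting $\T$ as $X$, this establishes $\Phi$.

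The honest remark here is that there is no genuine obstacle left: all the real work — in particular the asymmetry between the compositional/global clauses and the supervaluational clause for the conditional — was already absorbed into the inductive proof of Lemma \ref{csv is supervaluational}, while the converse is an immediate consequence of the definition of $\jcsv$ together with the soundness Proposition \ref{intuitionistic soundness}. The only thing worth flagging is to make sure the two ``sides'' of the stated \emph{iff} are matched correctly to the two results (the left-hand side being the $\mrm{G}$-forcing condition $\Phi$ and the right-hand side the membership $\#\vphi\in\jcsv(X)$), so that Lemma \ref{csv is supervaluational} supplies exactly the nontrivial implication and the definition plus Proposition \ref{intuitionistic soundness} supply the routine one.
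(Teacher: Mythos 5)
Your proposal is correct and matches the paper's own argument exactly: the left-to-right direction is Lemma \ref{csv is supervaluational} verbatim, and the right-to-left direction is the definition of $\jcsv$ combined with Proposition \ref{intuitionistic soundness}. Nothing further is needed.
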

\begin{proof}
The right-to-left direction follows simply from the fact that $w\VdashM\vphi$ implies $w\Vdash_{\mrm{G},\mc{M}}\vphi$ (Proposition \ref{intuitionistic soundness}).
\end{proof}

\noindent Corollary \ref{csv iff classical} \textit{really is} the core of a supervaluational semantic relation, so we can already claim that CSV is such a relation. But we can convince ourselves further that the theory of truth we are constructing is supervaluational: we can show that the standard supervaluational jump, as defined in section \ref{semantics}, replicated over G-forcing, coincides with the CSV scheme. To that effect, we define:

\begin{dfn}
Let $\mc{M}$ be a structure, and let $w\in W_\mc{M}$. For a formula $\vphi$ of $\lt$, we define the relation $w\Vdash^{\mc{M}}_{\mrm{svi}_2}\vphi$ as follows:

\begin{center}
    $w\Vdash^{\mc{M}}_{\mrm{svi}_2}\vphi: \Leftrightarrow \forall \mc{M}'\forall f(\mc{I}_\mc{M}\leqq_f \mc{I}_{\mc{M}'} \Rightarrow f(w)\Vdash_{\mrm{G}, \mc{M}'} \vphi)$.
\end{center}
\end{dfn}

\noindent We now write: 
\begin{equation*}
\mc{J}_{\mrm{svi}_2}(X):=\{\# \vphi \sth \forall \mc{M}, \forall w\in W_\mc{M}(\mc{I}_{\mc{M}} (w)=X \Rightarrow w\Vdash^{\mc{M}}_{\mrm{svi}_2}\vphi)\}
\end{equation*}

\noindent Once again, this is a strict supervaluational theory of truth in the style of Kripke, which operates just like SVI, with the exception that the underlying forcing relation is not the usual one but the global one. Then:

\begin{prop}\label{svi and csv}
$\#\vphi\in \mc{J}_{\mrm{svi}_2}(X) \Leftrightarrow  \#\vphi\in \mc{J}_\mrm{csv}(X)$ for any set of codes of sentences $X$. 
\end{prop}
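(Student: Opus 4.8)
The plan is to prove the two inclusions separately. In each direction I would fix an arbitrary $\mc{M}$ and $w\in W_\mc{M}$ with $\mc{I}_\mc{M}(w)=X$ and reduce membership in the relevant jump to a forcing statement at $w$, exploiting that the only genuinely supervaluational ingredient ($\mrm{svi}_2$-forcing, which quantifies over all embeddings and reads off the \emph{global} relation at the image world) can be collapsed to csv-forcing by the facts already established about the CSV scheme.

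For the inclusion $\mc{J}_{\mrm{svi}_2}(X)\subseteq\jcsv(X)$, I would start from $\#\vphi\in\mc{J}_{\mrm{svi}_2}(X)$, so that $w\Vdash^\mc{M}_{\mrm{svi}_2}\vphi$ for our fixed $\mc{M},w$. Instantiating the universal quantifier in the definition of $\Vdash^\mc{M}_{\mrm{svi}_2}$ with $\mc{M}'=\mc{M}$ and $f=\mrm{id}$ --- admissible since $\mc{I}_\mc{M}\leqq_{\mrm{id}}\mc{I}_\mc{M}$ --- yields $w\Vdash_{\mrm{G},\mc{M}}\vphi$. As $\mc{M},w$ were arbitrary, this gives $\forall\mc{M},w(\mc{I}_\mc{M}(w)=X\Ra w\Vdash_{\mrm{G},\mc{M}}\vphi)$, and Lemma \ref{csv is supervaluational} then delivers $\#\vphi\in\jcsv(X)$.

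For the converse $\jcsv(X)\subseteq\mc{J}_{\mrm{svi}_2}(X)$, I would again fix $\mc{M},w$ with $\mc{I}_\mc{M}(w)=X$. From $\#\vphi\in\jcsv(X)$ the definition of the jump gives $w\VdashM\vphi$. Lemma \ref{lemma1} then propagates this to every embedding, i.e. $\forall\mc{M}',f(\mc{I}_\mc{M}\leqq_f\mc{I}_{\mc{M}'}\Ra f(w)\Vdash^{\mc{M}'}_\mrm{csv}\vphi)$, and Proposition \ref{intuitionistic soundness} (csv-forcing implies G-forcing) weakens each consequent to $f(w)\Vdash_{\mrm{G},\mc{M}'}\vphi$. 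This is exactly the definiens of $w\Vdash^\mc{M}_{\mrm{svi}_2}\vphi$, and since $\mc{M},w$ were arbitrary we conclude $\#\vphi\in\mc{J}_{\mrm{svi}_2}(X)$.

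I expect the second inclusion to carry the real content: $\mrm{svi}_2$-forcing quantifies over all embeddings and evaluates the \emph{global} relation at each image world, whereas csv-membership only directly supplies csv-forcing at the source world. The bridge is the combination of the ``self-supervaluational'' Lemma \ref{lemma1}, which shows csv-forcing is already stable under embeddings, with the soundness Proposition \ref{intuitionistic soundness}. (An alternative route for this direction avoids Lemma \ref{lemma1}: by Corollary \ref{csv iff classical}, $\#\vphi\in\jcsv(X)$ iff G-forcing holds at every world with interpretation exactly $X$; since an image world $f(w)$ may carry a strictly larger interpretation $X'\supseteq X$, one passes to $X'$ by monotonicity of $\jcsv$ and re-applies the Corollary. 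The first route is cleaner and needs no monotonicity.) Beyond correctly marshalling these earlier results, no step presents a genuine obstacle.
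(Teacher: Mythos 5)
Your proof is correct, and it overlaps with the paper's on the easy half but takes a genuinely different route on the substantive half. The paper's entire proof consists of asserting that $\#\vphi\in \mc{J}_{\mrm{svi}_2}(X)$ is equivalent to $\forall \mc{M},w(\mc{I}_\mc{M}(w)=X\Ra w\Vdash_{\mrm{G},\mc{M}}\vphi)$ (declared ``relatively straightforward'' and not spelled out) and then citing Corollary \ref{csv iff classical}. Your first inclusion is exactly one half of that: identity embedding, then Lemma \ref{csv is supervaluational}. For the converse inclusion, however, you bypass the paper's intermediate characterization entirely and instead chain Lemma \ref{lemma1} (stability of csv-forcing under embeddings) with Proposition \ref{intuitionistic soundness} to land directly on the definiens of $\Vdash^{\mc{M}}_{\mrm{svi}_2}$. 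This is arguably a better decomposition: the step the paper waves off is not quite trivial, since in showing $w\Vdash^{\mc{M}}_{\mrm{svi}_2}\vphi$ from ``$\vphi$ is G-forced at every world with interpretation exactly $X$'' one must deal with image worlds $f(w)$ whose interpretation is strictly larger than $X$ (via monotonicity, or the root-adding trick of Proposition \ref{svi-intersection} plus hereditariness of G-forcing). Your parenthetical correctly diagnoses this, and your primary route via Lemma \ref{lemma1} sidesteps it cleanly. The trade-off is that your argument leans on Lemma \ref{lemma1}, whose proof is itself an induction, whereas the paper's intended route reuses Corollary \ref{csv iff classical} for both jumps symmetrically; but as a self-contained verification yours is the more complete of the two.
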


\begin{proof}

It's relatively straightforward to show that $\forall \mc{M},w(\mc{I}_\mc{M}(w)=X\Ra w\Vdash_{\mrm{G},\mc{M}}\vphi)$ iff $\#\vphi\in \mc{J}_{\mrm{svi}_2}(X)$. Therefore, the claim follows from Corollary \ref{csv iff classical}.
\end{proof}

\noindent The outtake from this is then the following: the theory we have constructed can be given using the compositional supervaluations, or using a more traditional, clearly supervaluational jump. Thus, we can assert that we have a supervaluational theory that respects compositionality for disjunction, conjunction, and the existential and the universal quantifiers. Moreover, it is now straightfoward to show that the theory also distributes over the conditional:

\begin{prop}\label{distribution conditional}
Let $X\subseteq \mrm{SENT}_{\lt}$ be such that $X=\mc{J}_\mrm{csv}(X)$. Then, for any formulae $\psi,\chi$, if $\#\psi\ra\chi\in X$ and $\#\psi\in X$, then $\#\chi \in X$. 
\end{prop}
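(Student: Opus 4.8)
The plan is to reduce membership in the fixed-point $X$ to the global forcing relation via Corollary \ref{csv iff classical}, and then carry out a modus ponens directly at the level of $\Vdash_\mrm{G}$. First I would invoke Corollary \ref{csv iff classical}, which states that for any formula $\vphi$, $\#\vphi \in \mc{J}_\mrm{csv}(X) = X$ holds iff $\forall \mc{M}, w(\mc{I}_\mc{M}(w) = X \Ra w \Vdash_{\mrm{G},\mc{M}} \vphi)$. Applying this equivalence to the two hypotheses, $\#\psi \ra \chi \in X$ and $\#\psi \in X$, yields: for every structure $\mc{M}$ and world $w$ with $\mc{I}_\mc{M}(w) = X$, we have both $w \Vdash_{\mrm{G},\mc{M}} \psi \ra \chi$ and $w \Vdash_{\mrm{G},\mc{M}} \psi$.

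Next I would fix an arbitrary such $\mc{M}$ and $w$ and aim to show $w \Vdash_{\mrm{G},\mc{M}} \chi$. Here the global forcing clause for the conditional (clause 5 of Definition \ref{global forcing}) unpacks $w \Vdash_{\mrm{G},\mc{M}} \psi \ra \chi$ as $\forall w' \geq_\mc{M} w\,(w' \Vdash_{\mrm{G},\mc{M}} \psi \Ra w' \Vdash_{\mrm{G},\mc{M}} \chi)$. Instantiating this universal quantifier at $w' = w$, which is legitimate by reflexivity of $\leq_\mc{M}$, and combining it with $w \Vdash_{\mrm{G},\mc{M}} \psi$, gives $w \Vdash_{\mrm{G},\mc{M}} \chi$ by modus ponens. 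Since $\mc{M}$ and $w$ were arbitrary subject only to $\mc{I}_\mc{M}(w) = X$, the reverse direction of Corollary \ref{csv iff classical} delivers $\#\chi \in \mc{J}_\mrm{csv}(X) = X$, as desired.

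There is no serious obstacle once Corollary \ref{csv iff classical} is in hand: the whole argument is a single modus ponens performed \emph{at the world $w$ itself}, exploiting the fact that the global clause for the conditional ranges over all $w' \geq_\mc{M} w$, including $w$. The one point that deserves care is that the modus ponens must be carried out \emph{inside} the global forcing relation $\Vdash_\mrm{G}$ rather than inside csv-forcing. This is precisely what Corollary \ref{csv iff classical} licenses, since it equates fixed-point membership with global forcing at all worlds interpreting $\T$ as $X$. Attempting the same step directly with $\Vdash^\mc{M}_\mrm{csv}$ would be more awkward, because the csv-clause for the conditional quantifies over embeddings $\mc{I}_\mc{M}\leqq_f\mc{I}_{\mc{M}'}$ rather than over accessible worlds, so the convenient reflexive instantiation would not be immediately available.
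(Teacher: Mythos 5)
Your proposal is correct and follows essentially the same route as the paper's own proof: both reduce fixed-point membership to global forcing via Corollary \ref{csv iff classical}, perform modus ponens at an arbitrary world using the conditional clause of $\Vdash_\mrm{G}$ (with the reflexive instance $w' = w$), and then convert back. You merely spell out the reflexivity step that the paper leaves as ``follows immediately.''
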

\begin{proof}
We make use of the equivalence in Corollary \ref{csv iff classical}. From this corollary and our assumptions, we get $\forall \mc{M}, w\in W_\mc{M}(\mc{I}_\mc{M}(w)=X\Ra w\Vdash_{\mrm{G}, \mc{M}}\psi\wedge\psi\ra\chi)$. Taking any such $\mc{M}'$ and $w'\in W_\mc{M'}$, $w'\Vdash_{\mrm{G}, \mc{M'}}\psi$ follows immediately. Since these were an arbitrary structure and world, $\#\chi \in \mc{J}_{\mrm{csv}}(X)$.
\end{proof}
\begin{corollary}\label{consistency of fps}
For any formula $\vphi$ of $\lt$ and for any set of codes of sentences $X$, it is not the case that $\#\vphi\in \jcsv(X)$ and $\#\neg\vphi\in \jcsv(X)$.
\end{corollary}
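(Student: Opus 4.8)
The plan is to reduce the claim to a statement about global forcing and then exploit the clause for the conditional together with the falsum clause. The key observation is that, in our intuitionistic language, $\neg\vphi$ is literally $\vphi\ra\bot$, so that $\#\neg\vphi\in\jcsv(X)$ is really a statement about a conditional. I would then invoke Corollary \ref{csv iff classical}, which tells us that membership in $\jcsv(X)$ is equivalent to being G-forced at every world whose truth interpretation is exactly $X$.

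First I would assume, towards a contradiction, that both $\#\vphi\in\jcsv(X)$ and $\#\neg\vphi\in\jcsv(X)$. By Corollary \ref{csv iff classical}, this means that for every structure $\mc{M}$ and world $w$ with $\mc{I}_\mc{M}(w)=X$ we have both $w\Vdash_{\mrm{G},\mc{M}}\vphi$ and $w\Vdash_{\mrm{G},\mc{M}}\vphi\ra\bot$. Such a structure certainly exists: take the one-world structure with $W_\mc{M}=\{w\}$, $\leq_\mc{M}=\{\langle w,w\rangle\}$ and $\mc{I}_\mc{M}(w)=X$, which is a legitimate persistent first-order intuitionistic Kripke $\omega$-structure. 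Fixing this $\mc{M}$ and $w$, the global forcing clause for the conditional (clause 5 of Definition \ref{global forcing}) unpacks $w\Vdash_{\mrm{G},\mc{M}}\vphi\ra\bot$ as: for all $w'\geq_\mc{M} w$, $w'\Vdash_{\mrm{G},\mc{M}}\vphi$ implies $w'\Vdash_{\mrm{G},\mc{M}}\bot$. Instantiating with $w'=w$ (using reflexivity of $\leq_\mc{M}$) and the fact that $w\Vdash_{\mrm{G},\mc{M}}\vphi$, I obtain $w\Vdash_{\mrm{G},\mc{M}}\bot$. This contradicts clause 1 of Definition \ref{global forcing}, namely $w\nVdash_\mrm{G}\bot$. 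Hence the two memberships cannot hold simultaneously.

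I expect this proof to be short, so there is no serious obstacle; the only points that require a moment's care are (i) recognizing that $\neg\vphi$ unfolds to $\vphi\ra\bot$, so that the conditional machinery applies, and (ii) ensuring that at least one structure realizing $X$ at some world exists, which the one-world structure supplies. One could alternatively try to route the argument through Proposition \ref{distribution conditional}, deriving $\#\bot\in X$ from $\#\vphi,\#\vphi\ra\bot\in X$; but that proposition is stated for genuine fixed-points $X=\jcsv(X)$, whereas here $X$ is arbitrary and we reason about $\jcsv(X)$ directly. The route through Corollary \ref{csv iff classical} avoids this mismatch and keeps the argument uniform in $X$.
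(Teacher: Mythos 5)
Your proof is correct and takes essentially the same route as the paper, whose entire proof is the citation ``By Corollary \ref{csv iff classical}''; you have simply spelled out the details that the paper leaves implicit (unfolding $\neg\vphi$ as $\vphi\ra\bot$, exhibiting the one-world structure realizing $X$, and deriving $w\Vdash_{\mrm{G},\mc{M}}\bot$ for a contradiction). Your closing remark about why Proposition \ref{distribution conditional} would not suffice for arbitrary $X$ is also a sound observation.
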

\begin{proof}
By Corollary \ref{csv iff classical}.
\end{proof}

\noindent Finally, we can present the fixed-point models of the theory CSV, and the axiomatic theory that they sanction. 
\begin{dfn}
A \emph{fixed-point model} of $\mrm{CSV}$ is a persistent first-order intuitionistic Kripke $\omega$-structure $\mc{M}:=\langle W_\mc{M}, \leq_\mc{M}, \mc{D}_\mc{M}, \mc{I}_\mc{M} \rangle$ such that, for all $X\subseteq \mrm{SENT}_{\lt}$ and all worlds $w\in W_\mc{M}$, if $\mc{I}_\mc{M}(w)=X$, then $X=\jcsv(X)$.
\end{dfn}

\begin{thm}\label{axioms of CSV}
Let $\mc{M}$ be a fixed-point model of $\mrm{CSV}$. Then $\mc{M}$ satisfies all of the following sentences, which we call the \emph{axioms of} $\mrm{CSV}$:

\begin{enumerate}
    \item $\forall s,t((\T(s\subdot{=}t) \leftrightarrow s^{\circ} = t^{\circ})\wedge (\T(s\subdot{\neq}t) \leftrightarrow s^{\circ} \neq t^{\circ}))$
    \item $\forall x,y (\mrm{Sent}_{\lt}(x\subdot \vee y) \ra (\T (x\subdot \vee y)\leftrightarrow \T x\vee \T y))$
    \item $\forall x,y (\mrm{Sent}_{\lt}(x\subdot \wedge y)\ra (\T (x\subdot \wedge y)\leftrightarrow \T x\wedge \T y))$
     \item $\forall x,v (\mrm{Sent}_{\lt}(\exists v x)\ra (\T (\subdot \exists v x)\leftrightarrow \exists t \T x(t/v)))$
     \item $\forall x,v (\mrm{Sent}_{\lt}(\forall v x)\ra (\T (\subdot \forall v x)\leftrightarrow \forall t \T x(t/v)))$
     \item $\forall x,y (\mrm{Sent}_{\lt}(x\subdot\ra y)\ra(\T ( x\subdot \ra y)\rightarrow (\T x\ra \T y )))$
      \item $\forall t(\T t^\circ \leftrightarrow \T\subdot\T t)$
     \item $\forall x(\mrm{Ax}_{\mrm{HAT}}(x)\rightarrow \T x)$ 
     \item $\forall x(\T x\wedge \T \subdot \neg x \rightarrow \bot)$ 
     \item$\forall x(\T\ulcorner \T(\dot{x}) \to \mathrm{Sent}(\dot{x})\urcorner)$
     \item $\T\ulcorner \vphi\urcorner \rightarrow \vphi$ for any formula $\vphi$ of $\lt$
\end{enumerate}
\end{thm}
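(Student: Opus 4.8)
The plan is to fix an arbitrary fixed-point model $\mc{M}$ and an arbitrary world $w\in W_\mc{M}$, writing $X:=\mc{I}_\mc{M}(w)$, so that by the definition of a fixed-point model $X=\jcsv(X)$; since $w$ is arbitrary it suffices to show each of the eleven sentences is G-forced at $w$. The single structural fact I would use throughout is that \emph{every} world of $\mc{M}$ — in particular every $w'\geq_\mc{M}w$ that we descend to when unwinding an upward-looking clause — again carries a fixed-point interpretation, so that the closure properties proved earlier may be invoked not only at $w$ but at each such $w'$. Several clauses are then immediate citations: axioms 2--5 and 7 are exactly items 1--5 of Theorem \ref{compositional clauses} instantiated at $w$, and axiom 8 is Proposition \ref{fp models are supervaluational}. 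For axiom 1 I would mirror the ISV1 argument of Theorem \ref{soundness of ISV}: $w\Vdash_{\mrm{G},\mc{M}}\T(s\subdot{=}t)$ holds iff $\#(s^\circ = t^\circ)\in X$, and since $X=\jcsv(X)$ collects precisely what is G-forced at worlds with interpretation $X$ (Corollary \ref{csv iff classical}) and all structures are $\Delta_0$-sound and agree on arithmetic, this is equivalent to $s^\circ = t^\circ$ being true, i.e.\ to $w\Vdash_{\mrm{G},\mc{M}} s^\circ = t^\circ$; the $\subdot{\neq}$ conjunct is symmetric, and each direction propagates to all $w'\geq_\mc{M}w$ because arithmetic is world-independent and every $w'$ is again a fixed-point world.

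Axiom 11 falls out directly from Corollary \ref{csv iff classical}: to G-force $\T\corn\vphi\ra\vphi$ at $w$ I unwind the conditional and take any $w'\geq_\mc{M}w$ with $w'\Vdash_{\mrm{G},\mc{M}}\T\corn\vphi$; this means $\#\vphi\in\mc{I}_\mc{M}(w')=\jcsv(\mc{I}_\mc{M}(w'))$, and instantiating Corollary \ref{csv iff classical} at the world $w'$ (which carries interpretation $\mc{I}_\mc{M}(w')$) yields $w'\Vdash_{\mrm{G},\mc{M}}\vphi$, as required. Axiom 10 I would handle as ISV6: the interpretation relation of any structure is by Definition \ref{persistent FO int structures} a subset of $W_\mc{M}\times\mrm{SENT}_{\lt}$, so whenever $w'\Vdash_{\mrm{G},\mc{M}}\T\bar n$ the code $n$ is already a sentence and $\mrm{Sent}(\bar n)$ holds, making the displayed conditional G-forced at every accessible world. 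For axiom 9 I would unwind $\forall x(\T x\wedge\T\subdot\neg x\ra\bot)$ into its G-forcing clauses: it can fail only if some $w'\geq_\mc{M}w$ and some $n$ give $w'\Vdash_{\mrm{G},\mc{M}}\T\bar n\wedge\T\subdot\neg\bar n$, i.e.\ $\#\vphi,\#\neg\vphi\in\mc{I}_\mc{M}(w')$ for the sentence $\vphi$ coded by $n$; but $\mc{I}_\mc{M}(w')$ is a fixed point, so Corollary \ref{consistency of fps} rules this out, and since $\bot$ is never G-forced the universally quantified conditional holds.

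The step I expect to demand the most care is axiom 6, the distribution of truth over the conditional, because its G-forcing clause nests three successive upward quantifications ($\ra$ being upward-looking). Concretely, G-forcing $\forall x,y(\mrm{Sent}_{\lt}(x\subdot\ra y)\ra(\T(x\subdot\ra y)\ra(\T x\ra\T y)))$ at $w$ reduces, after passing to numerals $\bar m,\bar n$ coding sentences $\psi,\chi$ and peeling off the three conditionals in turn, to showing that for every $w_3\geq_\mc{M}w$ with $\#(\psi\ra\chi)\in\mc{I}_\mc{M}(w_3)$ and $\#\psi\in\mc{I}_\mc{M}(w_3)$ one has $\#\chi\in\mc{I}_\mc{M}(w_3)$; heredity of the interpretation lets me push the two antecedents, asserted at earlier worlds of the chain, up to the common top world $w_3$. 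Since $\mc{I}_\mc{M}(w_3)=\jcsv(\mc{I}_\mc{M}(w_3))$ is a fixed point, this is precisely Proposition \ref{distribution conditional}. The main obstacle is thus not any isolated hard argument but the bookkeeping of these nested upward clauses together with the systematic appeal to the fact that each world encountered is itself a fixed-point world — the feature that permits Propositions \ref{distribution conditional} and \ref{fp models are supervaluational} and Corollaries \ref{csv iff classical} and \ref{consistency of fps} to be applied at every $w'\geq_\mc{M}w$ and not merely at $w$.
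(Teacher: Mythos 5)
Your proposal is correct and follows essentially the same decomposition as the paper's proof: axioms 2--5 and 7 from Theorem \ref{compositional clauses}, 6 from Proposition \ref{distribution conditional}, 8 from Proposition \ref{fp models are supervaluational}, 1 from Corollary \ref{csv iff classical}, 9 from Corollary \ref{consistency of fps}, and 10 from the fact that interpretations only contain codes of sentences; your route to axiom 11 via Corollary \ref{csv iff classical} is equivalent to the paper's appeal to Proposition \ref{intuitionistic soundness}, from which that corollary is derived. Your explicit bookkeeping of the nested upward clauses and the observation that every accessible world again carries a fixed-point interpretation simply makes precise what the paper leaves implicit.
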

\begin{proof}
2-5 and 7 follow from Theorem \ref{compositional clauses}; 6 follows Proposition \ref{distribution conditional}. 8 follows from Proposition \ref{fp models are supervaluational}. 1 is an immediate consequence of Corollary \ref{csv iff classical}. 11 follows by the soundness of the csv-forcing relation with respect to global forcing, i.e., Proposition \ref{intuitionistic soundness}.

\noindent For 9: by Corollary \ref{consistency of fps}, all fixed-points are consistent. Therefore, taking any $w\in W_\mc{M}$, $w'\nVdash_{\mrm{G},\mc{M}} \T x \wedge \T \subdot \neg x$ for all $x$ and all $w'\geq w$. Hence,  $w\Vdash_{\mrm{G},\mc{M}} \T x \wedge \T \subdot \neg x \ra \bot$. 

\noindent For 10: it follows from the fact that the jump $\jcsv$ is only defined on sets of codes of sentences.

\end{proof}

\noindent Therefore, we have proved the consistency of a supervaluational theory that is fully compositional for all connectives except for the conditional. Finally, it must be noted that the proof-theoretic strength of $\mrm{CSV}$ was basically established by Leigh: his theory $\mrm{H}^i$ in \cite{leigh_2013} is essentially the same as $\mrm{CSV}$.

\begin{prop}[Leigh]
Let $\mrm{CSV}$ be the theory extending $\mrm{HAT}$ with the axioms of CSV as defined in Theorem \ref{axioms of CSV}. Then, $\mrm{CSV}$ is consistent. Moreover, $\vert \mrm{CSV}\equiv \vert \mrm{ID}^i_1\vert$, i.e., $\mrm{CSV}$ proves the same  $\lnat$-theorems as the intuitionistic theory of one inductive definition. 
\end{prop}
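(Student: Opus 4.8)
The plan is to treat the two assertions separately: consistency is established semantically, using the fixed-point models already at hand, while the proof-theoretic equivalence is obtained by matching $\mrm{CSV}$ against Leigh's theory $\mrm{H}^i$ and transferring the analysis of \cite{leigh_2013}.

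For consistency, I would first recall that $\jcsv$ is monotone (Lemma \ref{monotonicity lemma} and the subsequent proposition), so by Knaster--Tarski it has fixed-points on $\mc{P}(\mrm{SENT}_{\lt})$; starting from the consistent set $\varnothing$ yields one, say $X=\jcsv(X)$. I then build the one-world fixed-point model $\mc{M}$ with $\mc{I}_\mc{M}(w)=X$. By Theorem \ref{axioms of CSV}, $\mc{M}$ $\mrm{G}$-satisfies every axiom of $\mrm{CSV}$, and every persistent $\omega$-structure $\mrm{G}$-satisfies $\mrm{HAT}$; hence $\mc{M}\vDash_\mrm{G}$ all axioms of $\mrm{CSV}$. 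By the strong soundness of global forcing (Corollary \ref{strong soundness G 1}), every $\lt$-sentence derivable in $\mrm{CSV}$ over intuitionistic predicate logic is $\mrm{G}$-forced at $w$. Since $w\nVdash_\mrm{G}\bot$ by clause 1 of Definition \ref{global forcing}, $\bot$ is not a theorem, i.e.\ $\mrm{CSV}$ is consistent; Corollary \ref{consistency of fps} additionally guarantees that the internal truth predicate of this model is consistent.

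For the equivalence, the strategy is to observe that $\mrm{CSV}$ and $\mrm{H}^i$ coincide up to their $\lnat$-theorems. Concretely, I would go through the list in Theorem \ref{axioms of CSV} axiom by axiom---the compositional clauses for $\vee,\wedge,\exists,\forall$, the truth-distribution axiom for $\ra$, the $\T\T$-axiom, the internalization of $\mrm{HAT}$, consistency, the sentence-hood axiom, and the reflection schema $\T\corn\vphi\ra\vphi$---and check that each is present in, or inter-derivable over $\mrm{HAT}$ with, the corresponding axiom of $\mrm{H}^i$. Once the two theories are seen to prove the same $\lnat$-sentences, the claim reduces to Leigh's theorem that $\vert\mrm{H}^i\vert\equiv\vert\mrm{ID}^i_1\vert$. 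Both directions transfer: the lower bound $\vert\mrm{CSV}\vert\geqq\vert\mrm{ID}^i_1\vert$ arises, exactly as for $\mrm{ISV}$ in Proposition \ref{proof theory ISV}, from the fact that the compositional and reflection axioms of $\mrm{CSV}$ supply precisely the resources Leigh uses to interpret the inductive fixed-point and prove induction over it; the upper bound $\vert\mrm{ID}^i_1\vert\geqq\vert\mrm{CSV}\vert$ comes from Leigh's interpretation of $\mrm{H}^i$ inside $\mrm{ID}^i_1$, where the extension of $\T$ is realized by the $\mrm{ID}^i_1$-definable inductive fixed-point of the jump.

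The main obstacle is the axiom-matching step, not any fresh metamathematics. Because $\mrm{CSV}$ is built from a peculiar compositional-yet-supervaluational forcing relation, I must verify that its axioms---especially the \emph{global} compositional clauses for $\vee$ and $\exists$---genuinely line up with $\mrm{H}^i$ and neither add nor omit $\lnat$-strength relative to it; any real discrepancy would have to be absorbed by checking it is conservative over $\lnat$. Granting the identification, the appeal to \cite{leigh_2013} is immediate, and it is in establishing that identification cleanly that the care lies.
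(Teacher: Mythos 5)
Your proposal is correct and follows essentially the same route as the paper: the proof-theoretic equivalence is obtained exactly as in the text, by identifying $\mrm{CSV}$ with Leigh's $\mrm{H}^i$ up to $\lnat$-theorems, reading off the lower bound from the availability in $\mrm{CSV}$ of the resources Leigh uses to derive the $\lnat$-theorems of $\mrm{ID}^i_1$, and the upper bound from the interpretation of $\mrm{H}^i$ in $\mrm{KP}^i_\omega$. Your explicit semantic consistency argument (fixed-point model plus strong soundness of $\mrm{G}$-forcing) is a sound way to discharge the consistency claim, which the paper leaves implicit in the preceding model-theoretic results and the reduction to $\mrm{KP}^i_\omega$.
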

\begin{proof}
The point is just that of Proposition \ref{proof theory ISV}: Leigh shows that his theory $\mrm{H}^i$ can: 
\begin{itemize}
    \item derive every $\lnat$-theorem of $\mrm{ID}^i_1$.
    \item be interpreted in the intuitionistic version of Kripke-Platek with Infinity, $\mrm{KP}^i_\omega$, which is proof-theoretically equivalent to $\mrm{ID}^i_1$.
\end{itemize}

\noindent Thus, the upper bound for $\mrm{CSV}$ is given by ii), and an inspection of the proof of i) shows that all the resources to carry out the proof are already present in $\mrm{CSV}$.  
\end{proof}

\section{Conclusion}

In this paper, we have introduced supervaluationist theories of truth over intuitionistic logic. To that effect, we have shown how to do supervaluations for intuitionistic logic and, in particular, how to do supervaluations over Kripke structures. In addition, we have paired the semantic theory thus constructed with an axiomatic theory that mimicked Cantini's celebrated theory VF. As we mentioned in the introduction, a project like ours yields a potential formal theory of truth for the constructive mathematician, one who does not accept classical logic but still thinks that all intuitionistic validities are true. We have also compared the particular method we employ, based on interpretation embeddings and ranging over possibly greater structures, with the one that has often been used in the literature to produce theories of truth and or necessity with possible-world semantics.

Furthermore, we aimed to fix a common problem of supervaluational truth theories over classical logic: the failure of compositionality for disjunctions and existentially quantified sentences. We addressed this issue by presenting a semantic scheme that was compositionally delivered, but which, has we eventually showed, corresponds to the basic format of supervaluations over a different forcing relation. The outcome is a theory of truth that not only delivers transparent fixed-points, but that is at the same time intuitionistically supervaluational (thus recovering all validities of intuitionistic logic) and enjoys a greater degree of compositionality than the supervaluational theories over classical logic.

\footnotesize \subsection*{\small Acknowledgements} We would like to thank Jonathan Beere, Beatrice Buonaguidi, Leon Horsten, Mateus \L e\l yk and Karl Georg Niebergall, as well as two anonymous referees, for helpful comments on different versions of this paper. This work was made possible by an LAHP (London Arts and Humanities Partnership) studentship, as well as by PLEXUS (Grant Agreement no 101086295), a Marie Sklodowska-Curie action funded by the EU under the Horizon Europe Research and Innovation Programme.

\normalsize

\bibliography{bibliography.bib}
\bibliographystyle{apalike}

\begin{appendices}
\renewcommand{\thesection}{\arabic{section}}
\section{Proof of Theorem \ref{completeness of G}}\label{appendix one}

In order to state the exact theorem that we shall prove, we provide a couple of definitions. Most of them are simple generalizations of the ones we already presented.

\begin{dfn}[$\mc{L}_\mrm{IPC}$]
The language $\mc{L}_\mrm{IPC}$ is the first-order language with logical constants $\wedge, \vee, \ra, \forall, \exists, \bot$, countably many variables from a set $V=\{v_1,...,v_n,...\}$, countably many predicates from a set $P=\{P_1,...,P_n,...\}$, countably many constants from a set $C=\{c_1,...,c_n,...\}$, and countably many functions from a set $F=\{f_1,...,f_n,...\}$. 
\end{dfn}

\noindent In an abuse of notation, we write $\mc{L}\subseteq \mc{L}_\mrm{IPC}$ to denote that $\mc{L}$ is a fragment of $\mc{L}_\mrm{IPC}$ which contains all the logical connectives of $\mc{L}_\mrm{IPC}$. In particular, $\lt\subseteq \mc{L}_\mrm{IPC}$. For a given set $X$, we also denote with $\langle X\rangle^{<\omega}$ the set of finite tuples of $X$. 

\begin{dfn}
A persistent first-order Kripke model $\mc{M}$ for a first-order language $\mc{L}\subseteq \mc{L}_\mrm{IPC}$ is a quadruple $\langle W_\mc{M}, \leq_\mc{M}, \mc{D}_\mc{M}, \mc{I}_\mc{M}\rangle$ such that

\begin{itemize}
    \item $W_\mc{M}$ is a set of worlds.
    \item $\leq_\mc{M}$ is an accessibility relation on $W_\mc{M}$ which is reflexive and transitive.
    \item $\mc{D}_{\mc{M}}$ is a function assigning elements of $W_\mc{M}$ to non-empty sets such that $w\leq_\mc{M}w'$ implies $\mc{D}_{\mc{M}}(w)\subseteq \mc{D}_{\mc{M}}(w')$ for all $w, w'\in W_\mc{M}$. The set $\bigcup \{\mc{D}_\mc{M}(w) \sth w\in W_\mc{M}\}$ is the \emph{domain} of $\mc{M}$, denoted simply $\bigcup \mc{D}_\mc{M}$.
    \item $\mc{I}_\mc{M}$ is a triple $\langle \mc{M}_P, \mc{M}_C, \mc{M}_F \rangle$ such that:
    \begin{itemize}
        \item $\mc{M}_P$ is a subset of $ P\times W_\mc{M} \times \langle \bigcup \mc{D}_\mc{M}\rangle^{<\omega}$, with the requirements that (i) $\langle P_i, w, \langle t_1,...,t_n\rangle\rangle \in \mc{M}_P$ iff $P_i\in \mc{L}$ and $t_1,...,t_n\in \mc{D}_\mc{M}(w)$ and $P_i$ is an $n$-ary predicate; and (ii) $\langle P_i, w, \langle t_1,...,t_n\rangle\rangle \in \mc{M}_P$ implies $\langle P_i, w', \langle t_1,...,t_n\rangle\rangle \in \mc{M}_P$ for all $P_i$, $t_1,...,t_n$, and $w,w'\in W_\mc{M}$ such that $w\leq_\mc{M}w'$.
        \item $\mc{M}_C$ is a function assigning at most one element of $\mc{D}_\mc{M}(w)$ to the pair $\langle w, c_i\rangle$, for $w\in W_\mc{M}$ and $c_i\in\mc{L}$. We also require that $\mc{M}_C(w, c_i)=t_i$ implies $\mc{M}_C(w', c_i)=t_i$ for all $w,w'\in W_\mc{M}$ such that $w\leq_\mc{M}w'$.
        \item $\mc{M}_F$ is a function assigning a function $f'_i: \mc{D}_\mc{M}(w)^n \longrightarrow \mc{D}_\mc{M}(w)$ to the pair $\langle w, f_i\rangle$, for $f_i\in F$ and $w\in W_\mc{M}$, provided $f_i$ is an $n$-ary function. We further require that $\mc{M}_C(w, f_i)(t_1,...,t_n)=t_m$ implies $\mc{M}_C(w', c_i)(t_1,...,t_n)=t_m$ for all $w,w'\in W_\mc{M}$ such that $w\leq_\mc{M}w'$.
    \end{itemize}
\end{itemize}
\end{dfn}

\noindent We now define the value of a closed term:

\begin{dfn}\label{value of a term}
    Given a language $\mc{L}\subseteq \mc{L}_\mrm{IPC}$, if $t$ is a closed term of $\mc{L}$, and $\mc{M}$ is a persistent first-order Kripke model for $\mc{L}$, $w\in W_\mc{M}$, the \emph{value of} $t$, $t^\mc{M}$, is defined as follows:
    \begin{enumerate}
        \item $t^\mc{M} = \mc{M}_C(w,c_i)$ if $t$ is a constant $c_i$ of $\mc{L}$.
        \item $t^\mc{M}= \mc{M}_F(w, f)(t_1^\mc{M}, ..., t_n^\mc{M})$ if $t$ is of the form $f(t_1,...,t_n)$, where $t_1,...,t_n$ are terms and $f$ is an $n$-ary function symbol of $\mc{L}$. 
    \end{enumerate}
\end{dfn}

\noindent This suffices for understanding the clauses of global forcing in Definition \ref{global forcing}. On the basis of this, we want to prove:

\begin{thmmm}
Let $A$ be a formula of $\mc{L}_\mrm{IPC}$. Then:

\begin{equation*}
    \vdash_\mrm{IPC} A \Leftrightarrow \vDash_\mrm{G} A
\end{equation*}

\noindent where $\vDash_\mrm{G} A$ indicates that $A$ is $\mrm{G}$-forced at every world of every persistent first-order Kripke structure.

\end{thmmm}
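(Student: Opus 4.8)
The plan is to prove the two implications separately. The soundness direction, $\vdash_\mrm{IPC} A \Ra \vDash_\mrm{G} A$, is the easy one: it is the special case $\Gamma=\varnothing$ of the strong soundness statement (Corollary \ref{strong soundness G 1}), itself established by a routine induction on the length of the $\mrm{IPC}$-derivation. The only points needing care are that G-forcing is hereditary along $\leq_\mc{M}$ — which one checks clause by clause, the global clauses for $\vee$ and $\exists$ causing no trouble since a universal quantifier over $w'\geq w$ only shrinks as $w$ grows — and that the introduction/elimination rules for $\vee$ and $\exists$ survive the global reading. For $\vee$-introduction heredity does the work, whereas for $\vee$- and $\exists$-elimination one instantiates the global quantifier at the world itself, using reflexivity of $\leq_\mc{M}$. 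I would record these checks and then appeal to Corollary \ref{strong soundness G 1}.

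For completeness, $\vDash_\mrm{G} A \Ra \vdash_\mrm{IPC} A$, I would argue by contraposition and build a canonical G-countermodel from saturated theories, exactly as in the Henkin-style completeness proof for intuitionistic predicate logic (see \cite{troelstra_van-dalen_1988}). Fix $A$ with $\nvdash_\mrm{IPC} A$. After expanding the language by countably many fresh constants, let the worlds of $\mc{M}^{c}$ be the \emph{saturated} sets of sentences, i.e. the consistent, deductively closed theories $\Gamma$ that are \emph{prime} ($\vphi\vee\psi\in\Gamma$ implies $\vphi\in\Gamma$ or $\psi\in\Gamma$) and \emph{witnessed} ($\exists x\vphi\in\Gamma$ implies $\vphi[x/t]\in\Gamma$ for some closed term $t$), ordered by inclusion, with $\mc{D}_{\mc{M}^{c}}(\Gamma)$ the closed terms of $\Gamma$'s language and atomic forcing given by membership. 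Inclusion is reflexive and transitive and atomic membership is persistent, so $\mc{M}^{c}$ is a legitimate persistent first-order Kripke model.

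The heart of the argument is the truth lemma: for every saturated $\Gamma$ and sentence $\vphi$, $\Gamma \Vdash_\mrm{G}\vphi \Lra \vphi\in\Gamma$, by induction on $\vphi$. The atomic, $\wedge$, $\ra$ and $\forall$ cases are standard (for $\ra$ one uses that $\Gamma\nvdash\vphi\ra\psi$ yields a saturated $\Delta\supseteq\Gamma$ with $\vphi\in\Delta$ and $\psi\notin\Delta$). The two cases where the \emph{global} clauses intervene are exactly the ones to watch, and they go through precisely because the canonical worlds are simultaneously prime/witnessed and persistent. For disjunction: if $\vphi\vee\psi\in\Gamma$ then primeness gives, say, $\vphi\in\Gamma$, and persistence upgrades this to ``$\vphi\in\Delta$ for every $\Delta\supseteq\Gamma$'', validating the first global disjunct; conversely, a disjunct forced at all $\Delta\supseteq\Gamma$ is forced at $\Gamma$ itself (reflexivity), and the induction hypothesis together with deductive closure puts $\vphi\vee\psi$ into $\Gamma$. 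The existential case is symmetric: a witness supplied by saturation lies in $\mc{D}(\Gamma)$ and hence persists into every $\mc{D}(\Delta)$, satisfying the global clause, while the converse reads the witness off at $\Gamma$ itself. With the truth lemma in hand, a Lindenbaum-style saturation argument gives a saturated $\Gamma_0$ with $A\notin\Gamma_0$ (possible because $\nvdash_\mrm{IPC} A$); then $\Gamma_0\nVdash_\mrm{G} A$, so $A$ is not G-valid, which closes the contrapositive.

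The main obstacle is the truth lemma for the global connectives: membership in a world must be made to coincide with a \emph{universal} condition (for $\vee$) and a \emph{persisting-witness} condition (for $\exists$) ranging over all successor worlds, and this is exactly where primeness-plus-persistence (for the ``$\Leftarrow$'' halves) and reflexivity (for the ``$\Rightarrow$'' halves) have to be balanced — with non-prime or non-persistent worlds the equivalence fails, and the varying domains of the canonical model must be handled so that witnesses produced at $\Gamma$ remain available higher up. This canonical construction is the semantic counterpart of Do\v sen's S4-translation argument for the propositional fragment; as the paper notes, pushing that translation-based method through the predicate case — which would moreover deliver \emph{strong} completeness — is where the genuine difficulty lies, so the route above is what secures the weak completeness asserted in Theorem \ref{completeness of G}.
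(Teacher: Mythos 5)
Your proof is correct, but it takes a genuinely different route from the paper's. The paper argues via a detour through quantified S4: it converts G-countermodels into S4-countermodels for the G\"odel--McKinsey--Tarski translation $g(\vphi)$ and back (Lemmas \ref{atomic sentences}--\ref{s4 and g}), then chains $\vdash_\mrm{IPC}\vphi\Lra\ \vdash_\mrm{S4}g(\vphi)\Lra\ \vDash_\mrm{S4}g(\vphi)\Lra\ \vDash_\mrm{G}\vphi$, importing the syntactic faithfulness of $g$ (Schwichtenberg--Troelstra) and the Kripke completeness of quantified S4. You instead build a self-contained Henkin-style canonical model of saturated (consistent, deductively closed, prime, witnessed) theories and prove the truth lemma directly, and your diagnosis of why the global clauses survive is exactly right: primeness plus persistence handle the ``$\Leftarrow$'' halves for $\vee$ and $\exists$, reflexivity the ``$\Rightarrow$'' halves. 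What each approach buys: the translation route stays close to Do\v sen's original propositional argument and reuses off-the-shelf results, but at the predicate level it only delivers weak completeness; your canonical construction is longer to write out but is the natural vehicle for \emph{strong} completeness --- restricting the canonical model to the cone above a saturated $\Gamma_0\supseteq\Gamma$ with $A\notin\Gamma_0$ yields a model G-forcing all of $\Gamma$ but not $A$, which would settle the converse of Corollary \ref{strong soundness G 1} that the paper explicitly leaves as a conjecture. (A further shortcut implicit in your truth lemma: on \emph{any} persistent model, heredity plus reflexivity of $\leq_\mc{M}$ make the global clauses for $\vee$ and $\exists$ equivalent to the local ones, so $\Vdash_\mrm{G}$ coincides with ordinary intuitionistic forcing and the theorem also follows at once from standard Kripke completeness for IPC.) One bookkeeping caution: you invoke Corollary \ref{strong soundness G 1} for the soundness direction, but in the paper that corollary is \emph{derived from} Theorem \ref{completeness of G}; your parenthetical remark that it can instead be obtained by induction on derivations is what keeps your argument non-circular, so that induction should be carried out explicitly rather than cited.
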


\noindent The theorem, whose proof builds on \cite{dosen_1991} and \cite{fitting_1969}, will require a detour into the quantified modal logic S4.\footnote{Although quantified S4 is often denoted QS4 to distinguish it from the propositional S4, we will simply call it S4 here.} We first define a translation relating the language of modal logic and that of intuitionistic logic: 

\begin{dfn}[$\mc{L}_\Box$]
The language $\mc{L}_\Box$ extends the language $\mc{L}_\mrm{IPC}$ with the unary operator $\Box$. Thus, every formula of $\mc{L}_\mrm{IPC}$ is a formula of $\mc{L}_\Box$; and, if $\vphi$ is a formula of $\mc{L}_\Box$, then $\Box\vphi$ is a formula of $\mc{L}_\Box$.
\end{dfn}

\begin{dfn}[Translation $g$]
We define a translation $g:\mc{L}_\mrm{IPC}\longrightarrow \mc{L}_\Box$ as follows: 

\begin{align*}
    g(P(t_1,...,t_n)) & = \Box P(t_1,...,t_n), \, \text{for } P(t_1,...,t_n) \, \text{an atomic formula} \\
    g(\vphi \wedge\psi) & = g(\vphi)\wedge g(\psi)\\
    g(\vphi \vee\psi) & = g(\vphi)\vee  g(\psi)\\
    g(\vphi \ra\psi) & = \Box (g(\vphi)\ra g(\psi))\\
    g(\forall v\vphi) & = \Box \forall v g(\vphi(v))\\
    g(\exists v\vphi) & = \exists v g(\vphi(v))\\
    g(\bot) &= \bot
\end{align*}
\end{dfn} 

\noindent The translation $g$ is well-known, and features, e.g., in \cite[230]{schwichtenberg_troelstra_1996}.

Now, for the derivation relations $\vdash_\mrm{S4}$ and $\vdash_\mrm{IPC}$, we can work over natural deduction systems such as the ones defined in e.g. \cite{flagg_friedman_1986}. We also assume a standard presentation of quantified S4 models via Kripke semantics, with $w\Vdash_\mrm{S4}\vphi$ being the S4-forcing relation for $w$ a world in an S4 model and $\vphi$ a formula of $\lbox$, and $\vDash_\mrm{S4}$ being the satisfaction relation w.r.t. S4 models. As is often done, we take these models to be S4 models (i.e. Kripke models with a reflexive and transitive accessibility relation) with expanding domains, i.e., if $w, w'$ are worlds in a model $\mc{M}$ and $w\leq w'$, then $\mc{D}(w)\subseteq \mc{D}(w')$. It is well-known that quantified S4 is strongly complete with respect to the class of such Kripke models---see e.g. \cite{hughes_cresswell_1996}.

We first note that the G-forcing relation is hereditary, that is:
\begin{lemma}\label{hereditariness g}
Let $\mc{L}\subseteq \mc{L}_\mrm{IPC}$ and let $\mc{M}$ be a persistent first-order Kripke model for $\mc{L}$, with $w\in W_\mc{M}$. Then, if $w'\geq_\mc{M} w$,
\begin{equation*}
    w\Vdash_\mrm{G} \vphi \Ra w'\Vdash_\mrm{G} \vphi
\end{equation*}
\end{lemma}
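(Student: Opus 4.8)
The plan is to prove the statement by a straightforward induction on the complexity of $\vphi$, exploiting the fact that every clause in Definition \ref{global forcing} is either local and already guarded by the built-in persistence of the model, or else \emph{upward-looking}, in which case hereditariness is inherited for free from the transitivity of $\leq_\mc{M}$.

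First I would dispatch the base cases. For $\bot$ the implication is vacuous, since $w\nVdash_\mrm{G}\bot$ for every $w$. For an atomic formula $P_i(t_1,\dots,t_n)$, the claim is precisely the persistence requirement (ii) imposed on $\mc{M}_P$ in the definition of a persistent first-order Kripke model: from $\langle P_i, w, \langle t_1,\dots,t_n\rangle\rangle\in\mc{M}_P$ and $w\leq_\mc{M}w'$ one obtains $\langle P_i, w', \langle t_1,\dots,t_n\rangle\rangle\in\mc{M}_P$, which is exactly $w'\Vdash_\mrm{G}P_i(t_1,\dots,t_n)$.

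For the inductive step, the conjunction case is the only one that genuinely invokes the induction hypothesis: from $w\Vdash_\mrm{G}\psi\wedge\chi$ we get $w\Vdash_\mrm{G}\psi$ and $w\Vdash_\mrm{G}\chi$, and applying the hypothesis to each conjunct at $w'\geq_\mc{M}w$ yields $w'\Vdash_\mrm{G}\psi\wedge\chi$. The remaining connectives $\vee$, $\ra$, $\exists$, and $\forall$ all carry a global clause of the shape ``for all $w''\geq_\mc{M}w$, [a condition on $w'']$''. Here the key observation is that, by transitivity of $\leq_\mc{M}$, the cone $\{w'' : w''\geq_\mc{M}w'\}$ is contained in the cone $\{w'' : w''\geq_\mc{M}w\}$ whenever $w'\geq_\mc{M}w$; hence a universally quantified condition that holds over the larger cone holds a fortiori over the smaller one. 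This delivers hereditariness for these four connectives \emph{directly}, without any appeal to the induction hypothesis. Note in particular that for $\exists$ and $\forall$ the domain-monotonicity of $\mc{D}_\mc{M}$ need not even be invoked, since the witness (respectively instance) is re-quantified at each world of the smaller cone.

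I do not anticipate a serious obstacle, as the argument is a routine structural induction. The only point demanding mild care is to confirm, clause by clause in Definition \ref{global forcing}, that the inner condition in each of the $\vee$, $\ra$, $\exists$, $\forall$ cases is indeed evaluated at worlds lying in the cone above the quantified world, so that the cone-inclusion argument applies uniformly; once this is checked, the proof collapses to the observation that all global clauses are monotone under passing to a larger world, while the sole local connective beyond the atomic case, namely $\wedge$, is handled by the induction hypothesis.
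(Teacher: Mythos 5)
Your proof is correct and follows essentially the same route as the paper's: a structural induction in which the atomic case is exactly the persistence condition on $\mc{M}_P$, conjunction uses the induction hypothesis, and the globally quantified clauses ($\vee$, $\ra$, $\exists$, $\forall$) follow from the inclusion of upward cones given by transitivity of $\leq_\mc{M}$ --- precisely the argument the paper spells out (contrapositively) for the $\vee$ case and defers to the standard proof for the rest. The only caveat concerns your claim that $\exists$ needs no induction hypothesis: clause 6 of Definition \ref{global forcing} as printed reads ``$w\Vdash_\mrm{G}\vphi[x/c_d]$'' rather than ``$w'\Vdash_\mrm{G}\vphi[x/c_d]$'', and under that literal reading the existential case would require the induction hypothesis after all; but this is evidently a typo (compare the csv clause for $\exists$), and under the intended reading your cone-inclusion argument goes through exactly as you state it.
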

\begin{proof}
We induct on the complexity of $\vphi$. The only distinct cases with respect to the usual hereditary result for intuitionistic logic are those of $\vee$ and $\exists$. We prove the former. Assume $w'\nVdash_\mrm{G} \psi\vee \chi$. Then, there is some $w''\geq_\mc{M}w'$ such that $w''\nVdash_\mrm{G} \psi$ and some $w'''\geq_\mc{M}w'$ such that $w'''\nVdash_\mrm{G} \chi$. Since $\geq_\mrm{G}$ is transitive, $w'''\geq_\mc{M}w$ and $w''\geq_\mc{M}w$, so $w\nVdash_\mrm{G} \psi\vee \chi$.
\end{proof}

\begin{lemma}\label{atomic sentences}
Let $\mc{M}=\langle W_\mc{M}, \leq_\mc{M}, \mc{D}_\mc{M}, \mc{I}_\mc{M}\rangle$ be a persistent Kripke first-order structure for the language $\lipc$, with $\mc{I}_\mc{M}$ being the triple $\langle \mc{M}_P, \mc{M}_C, \mc{M}_F\rangle$. Now, consider an $\lbox$ model $\mc{N}=\langle W_\mc{N}, \leq_\mc{N}, \mc{D}_\mc{N}, \mc{I}_\mc{N}\rangle$ given by $W_\mc{N}=W_\mc{M}, \leq_\mc{N}=\leq_\mc{M},  \mc{D}_\mc{N}=\mc{D}_\mc{M}$, and $\mc{I}_\mc{N}$ being such that $\mc{N}_C=\mc{M}_C$ and $\mc{M}_F=\mc{M}_F$ and, for all atomic $P(t_1,...,t_n)$:

    \begin{equation*}
    w\Vdash_{\mrm{G},\mc{M}}P(t_1,...,t_n) \Leftrightarrow w\Vdash_{\mrm{S4},\mc{N}}g(P(t_1,...,t_n))
    \end{equation*}

\noindent Then, for any formula $\vphi$ of $\lipc$, 

\begin{equation*}
    w\Vdash_{\mrm{S4},\mc{N}}g(\vphi) \Lra w\Vdash_{\mrm{G},\mc{M}}\vphi 
    \end{equation*}

\end{lemma}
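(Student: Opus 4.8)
The plan is to prove the biconditional by induction on the complexity of $\vphi$, unfolding the translated formula $g(\vphi)$ through the standard S4-forcing clauses on the left and $\vphi$ through the global clauses of Definition \ref{global forcing} on the right. Throughout I will use that $\leq_\mc{N}=\leq_\mc{M}$ and $\mc{D}_\mc{N}=\mc{D}_\mc{M}$, and that the two structures assign the same value to every closed term (since $\mc{N}_C=\mc{M}_C$ and $\mc{N}_F=\mc{M}_F$), so that the term clauses of Definition \ref{value of a term} agree. The two base cases are immediate: $g(\bot)=\bot$ is forced at no world on either side, and for atomic $P(t_1,\dots,t_n)$ the required equivalence $w\Vdash_{\mrm{G},\mc{M}}P(t_1,\dots,t_n)\Lra w\Vdash_{\mrm{S4},\mc{N}}\Box P(t_1,\dots,t_n)$ is exactly the hypothesis imposed on $\mc{N}$ in the statement.

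The cases of $\wedge$, $\ra$, and $\forall$ are then routine. Conjunction follows directly from the induction hypothesis, since $g$ commutes with $\wedge$ and both forcing clauses are local and conjunctive. For $\psi\ra\chi$ I use $g(\psi\ra\chi)=\Box\,(g(\psi)\ra g(\chi))$: the left side unfolds, via the box clause, to ``for all $w'\geq_\mc{M}w$, $w'\Vdash_{\mrm{S4},\mc{N}}g(\psi)$ implies $w'\Vdash_{\mrm{S4},\mc{N}}g(\chi)$'', and applying the induction hypothesis at each such $w'$ yields precisely the global clause for the conditional. Similarly $g(\forall v\psi)=\Box\forall v\,g(\psi(v))$ unfolds to ``for all $w'\geq_\mc{M}w$ and all $d\in\mc{D}_\mc{M}(w')$, $w'\Vdash_{\mrm{S4},\mc{N}}g(\psi(c_d))$'', which matches the global clause for the universal quantifier once the induction hypothesis is applied pointwise.

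The delicate cases are $\vee$ and $\exists$, precisely because $g$ leaves them unboxed ($g(\psi\vee\chi)=g(\psi)\vee g(\chi)$ and $g(\exists v\psi)=\exists v\,g(\psi)$), so that the \emph{local} S4 clauses for these connectives must be reconciled with the \emph{global} clauses of $\Vdash_\mrm{G}$. The reconciling tool is Lemma \ref{hereditariness g} together with the reflexivity of $\leq_\mc{M}$, which jointly give, for every formula $\theta$, the equivalence $w\Vdash_{\mrm{G},\mc{M}}\theta\Lra\forall w'\geq_\mc{M}w\,(w'\Vdash_{\mrm{G},\mc{M}}\theta)$. For disjunction, $w\Vdash_{\mrm{S4},\mc{N}}g(\psi)\vee g(\chi)$ reduces by the induction hypothesis to $w\Vdash_{\mrm{G},\mc{M}}\psi$ or $w\Vdash_{\mrm{G},\mc{M}}\chi$, and by the displayed equivalence this is exactly the global disjunction clause. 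For the existential quantifier, the backward direction uses reflexivity (instantiating the global clause at $w'=w$ returns a witness in $\mc{D}_\mc{M}(w)$, matching the local S4 clause), while the forward direction uses the expanding-domain condition on $\mc{D}_\mc{M}$ (a witness $d\in\mc{D}_\mc{M}(w)$ lies in $\mc{D}_\mc{M}(w')$ for every $w'\geq_\mc{M}w$) together with heredity. I expect this existential case to be the main point requiring care, since it is where the global and the local modal clauses genuinely diverge; but once the heredity-and-reflexivity equivalence and the expanding domains are in hand, it closes without difficulty.
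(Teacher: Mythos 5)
Your proposal is correct and follows essentially the same route as the paper's proof: induction on the complexity of $\vphi$, with the local S4 clauses for the unboxed connectives $\vee$ and $\exists$ reconciled with the global clauses of $\Vdash_\mrm{G}$ via Lemma \ref{hereditariness g} and reflexivity, exactly as in the paper's displayed disjunction case. Your treatment of the existential quantifier (reflexivity for one direction, expanding domains plus heredity for the other) correctly fills in a case the paper leaves to the reader.
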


\begin{proof}
    By induction on the complexity of $\vphi$. The atomic case follows by assumption. We display some of the other cases for illustration:
    
    \begin{align*}
      w\Vdash_{\mrm{S4}, \mc{N}}g(\psi\vee \chi) & \Lra w\Vdash_{\mrm{S4}, \mc{N}} g(\psi) \text{ or } w\Vdash_{\mrm{S4}, \mc{N}} g(\chi) & \\
      & \Lra w\Vdash_{\mrm{G}, \mc{M}} \psi \text{ or } w\Vdash_{\mrm{G}, \mc{M}} \chi & \text{by IH}\\
      & \Lra \forall w'\geq_\mc{M}w (w'\Vdash_{\mrm{G}, \mc{M}} \psi )\text{ or } \forall w'\geq_\mc{M}w (w'\Vdash_{\mrm{G}, \mc{M}} \chi) & \text{by Lemma \ref{hereditariness g}}\\
      & \Lra w\Vdash_{\mrm{G}, \mc{M}} \psi\vee\chi\\
    \end{align*}

\begin{align*}
      w\Vdash_{\mrm{S4}, \mc{N}}g(\psi\ra \chi) & \Lra w\Vdash_{\mrm{S4}, \mc{N}} \Box (g(\psi) \ra g(\chi)) & \\
      & \Lra \forall w'\geq_\mc{M}w( w'\Vdash_{\mrm{S4}, \mc{N}} g(\psi) \Ra w'\Vdash_{\mrm{S4}, \mc{N}} g(\chi)) & \\
       & \Lra \forall w'\geq_\mc{M}w( w'\Vdash_{\mrm{G}, \mc{M}} \psi \Ra w'\Vdash_{\mrm{G}, \mc{M}} \chi) & \text{by IH}\\ 
       & \Lra w\Vdash_{\mrm{G}, \mc{M}} \psi\ra \chi & \\
    \end{align*}

\begin{align*}
      w\Vdash_{\mrm{S4}, \mc{N}}g(\forall x \psi) & \Lra w \Vdash_{\mrm{S4}, \mc{N}} \Box \forall vg(\psi(v)) & \\
      &\Lra \forall w'\geq_\mc{M}w, \forall d\in \mc{D}(w')(w'\Vdash_{\mrm{S4}, \mc{N}} g(\psi(c_d)) &\\
      &\Lra \forall w'\geq_\mc{M}w, \forall d\in \mc{D}(w')(w'\Vdash_{\mrm{G}, \mc{M}}\psi(c_d)) & \text{by IH}\\
      & \Lra w\Vdash_{\mrm{G}, \mc{M}} \forall x \psi
    \end{align*}

\end{proof}

\noindent The following lemma obtains in a similar way:

\begin{lemma}\label{atomic sentences 2}
    Let $\mc{N}=\langle W_\mc{N}, \leq_\mc{N}, \mc{D}_\mc{N}, \mc{I}_\mc{N}\rangle$ be a persistent Kripke first-order structure for the language $\lbox$, with $\mc{I}_\mc{N}$ being the triple $\langle \mc{N}_P, \mc{N}_C, \mc{N}_F\rangle$. Now, consider an $\lipc$ model $\mc{M}=\langle W_\mc{M}, \leq_\mc{M}, \mc{D}_\mc{M}, \mc{I}_\mc{M}\rangle$ given by $W_\mc{M}=W_\mc{N}, \leq_\mc{M}=\leq_\mc{N},  \mc{D}_\mc{M}=\mc{D}_\mc{N}$, and $\mc{I}_\mc{M}$ being such that $\mc{M}_C=\mc{N}_C$ and $\mc{M}_F=\mc{N}_F$ and, for all atomic $P(t_1,...,t_n)$:

    \begin{equation*}
    w\Vdash_{\mrm{G},\mc{M}}P(t_1,...,t_n) \Leftrightarrow w\Vdash_{\mrm{S4},\mc{N}} g(P(t_1,...,t_n))
    \end{equation*}

\noindent Then, for any formula $\vphi$ of $\lipc$, 

\begin{equation*}
    w\Vdash_{\mrm{G},\mc{M}}\vphi \Lra w\Vdash_{\mrm{S4},\mc{N}}g(\vphi)
    \end{equation*}

\end{lemma}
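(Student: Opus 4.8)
The proof is symmetric to that of Lemma \ref{atomic sentences}, with the roles of the given and the constructed model interchanged: here the $\lbox$-model $\mc{N}$ is given and the $\lipc$-model $\mc{M}$ is built from it. The only point demanding genuine care---and hence the main (if modest) obstacle---is checking that $\mc{M}$ really is a \emph{persistent} first-order Kripke model for $\lipc$.

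First I would confirm the well-definedness of $\mc{M}$. Its worlds, accessibility relation, domains, and the interpretations $\mc{M}_C$ and $\mc{M}_F$ are copied verbatim from $\mc{N}$, so they inherit reflexivity and transitivity of $\leq_\mc{N}$, the expanding-domain condition, and the persistence of the constant and function interpretations. The delicate clause is the atomic interpretation $\mc{M}_P$, which is stipulated by $w\Vdash_{\mrm{G},\mc{M}}P(t_1,...,t_n)\Leftrightarrow w\Vdash_{\mrm{S4},\mc{N}}\Box P(t_1,...,t_n)$. Because $\Box$-formulae are upward persistent in any S4 model---if $w\Vdash_{\mrm{S4},\mc{N}}\Box P$ and $w\leq_\mc{N}w'$ then $w'\Vdash_{\mrm{S4},\mc{N}}\Box P$ by transitivity of $\leq_\mc{N}$---the resulting atomic forcing of $\mc{M}$ is automatically hereditary, so $\mc{M}_P$ meets condition (ii) of its definition. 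This is exactly the work done by the box in the translation clause $g(P)=\Box P$, and it is what makes the construction of $\mc{M}$ legitimate.

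With $\mc{M}$ in hand, I would establish the biconditional by induction on the complexity of $\vphi$, reproducing line-for-line the equivalence chains of Lemma \ref{atomic sentences}. The atomic case holds by the assumed equivalence. In the inductive step, the clauses $g(\psi\wedge\chi)=g(\psi)\wedge g(\chi)$ and $g(\psi\vee\chi)=g(\psi)\vee g(\chi)$, together with the induction hypothesis and the hereditariness of $\mrm{G}$-forcing (Lemma \ref{hereditariness g}), handle $\wedge$ and $\vee$; the clause $g(\psi\ra\chi)=\Box(g(\psi)\ra g(\chi))$ unfolds through the S4 semantics of $\Box$ into the global clause for $\ra$; $g(\forall v\psi)=\Box\forall v\,g(\psi)$ reproduces the universal clause; and $g(\exists v\psi)=\exists v\,g(\psi)$ matches the global existential clause, the agreement of the local S4 existential with the global $\mrm{G}$-existential resting on reflexivity of $\leq_\mc{N}$ together with Lemma \ref{hereditariness g}. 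Since the translation $g$ and both forcing relations are identical to those of Lemma \ref{atomic sentences}, every step transfers unchanged, and the only new ingredient is the persistence check of the preceding paragraph.
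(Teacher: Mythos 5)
Your proposal is correct and follows essentially the same route as the paper, which simply notes that this lemma ``obtains in a similar way'' to Lemma \ref{atomic sentences}, i.e.\ by the same induction on the complexity of $\vphi$ using the clauses of $g$ and the hereditariness of $\mrm{G}$-forcing. Your additional check that the stipulated atomic interpretation of $\mc{M}$ is hereditary (via the persistence of $\Box$-formulae in S4) is exactly the point the paper itself raises when it applies this lemma to turn an S4 countermodel into a $\mrm{G}$-countermodel, so including it here is a welcome, if not strictly necessary, piece of care.
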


\noindent The above two lemmata help in proving that G-countermodels for a formula can be S4-countermodels for that formula, and vice versa:

\begin{lemma}
 If there is a persistent first-order Kripke structure $\mc{M}$ such that $\mc{M}\nvDash_\mrm{G}\vphi$ for $\vphi$ a formula  of $\lipc$, then there is an $\mrm{S4}$ countermodel $\mc{M}'$ such that $\mc{M}'\nvDash_\mrm{S4} g(\vphi)$. 
\end{lemma}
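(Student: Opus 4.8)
The plan is to transfer the given $\mrm{G}$-countermodel into an $\mrm{S4}$-countermodel on the very same frame, and then let Lemma \ref{atomic sentences} do all the real work. Suppose $\mc{M}=\langle W_\mc{M}, \leq_\mc{M}, \mc{D}_\mc{M}, \mc{I}_\mc{M}\rangle$ is a persistent first-order Kripke structure for $\lipc$ with $\mc{M}\nvDash_\mrm{G}\vphi$. By the definition of $\vDash_\mrm{G}$ this means there is a world $w_0\in W_\mc{M}$ with $w_0\nVdash_{\mrm{G},\mc{M}}\vphi$. So it suffices to produce an $\mrm{S4}$ model falsifying $g(\vphi)$ at some world.

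First I would build the $\lbox$ model $\mc{N}$ exactly as prescribed in Lemma \ref{atomic sentences}: keep $W_\mc{N}=W_\mc{M}$, $\leq_\mc{N}=\leq_\mc{M}$, $\mc{D}_\mc{N}=\mc{D}_\mc{M}$, $\mc{N}_C=\mc{M}_C$, $\mc{N}_F=\mc{M}_F$, and define the predicate interpretation $\mc{N}_P$ so that, at each world $w$, the bare atom $P(t_1,\dots,t_n)$ is $\mrm{S4}$-forced exactly when it is $\mrm{G}$-forced in $\mc{M}$. Since the frame $\langle W_\mc{M},\leq_\mc{M}\rangle$ is reflexive and transitive and the domain function expands along $\leq_\mc{M}$, $\mc{N}$ is a legitimate $\mrm{S4}$ model with expanding domains. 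Moreover, because the atomic $\mrm{G}$-forcing is hereditary (Lemma \ref{hereditariness g}), we have $w\Vdash_{\mrm{G},\mc{M}}P(t_1,\dots,t_n)$ iff $w'\Vdash_{\mrm{G},\mc{M}}P(t_1,\dots,t_n)$ for every $w'\geq_\mc{M}w$; hence the chosen $\mc{N}_P$ satisfies the boxed-atom correspondence demanded by Lemma \ref{atomic sentences}, namely $w\Vdash_{\mrm{G},\mc{M}}P(t_1,\dots,t_n)\Leftrightarrow w\Vdash_{\mrm{S4},\mc{N}}\Box P(t_1,\dots,t_n)$, and the right-hand side is by definition $w\Vdash_{\mrm{S4},\mc{N}}g(P(t_1,\dots,t_n))$.

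With $\mc{N}$ so defined, Lemma \ref{atomic sentences} applies and yields, for every formula $\psi$ of $\lipc$ and every world $w$, the equivalence $w\Vdash_{\mrm{S4},\mc{N}}g(\psi)\Lra w\Vdash_{\mrm{G},\mc{M}}\psi$. Instantiating $\psi:=\vphi$ and $w:=w_0$ and using $w_0\nVdash_{\mrm{G},\mc{M}}\vphi$, I conclude $w_0\nVdash_{\mrm{S4},\mc{N}}g(\vphi)$. Therefore $\mc{N}\nvDash_\mrm{S4}g(\vphi)$, and taking $\mc{M}':=\mc{N}$ furnishes the required $\mrm{S4}$-countermodel.

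Since all the inductive labour is already discharged inside Lemma \ref{atomic sentences}, the argument here is essentially bookkeeping, and I do not expect a genuine obstacle. The only points that deserve a line of care are confirming that the transferred structure is a bona fide $\mrm{S4}$ model — that is, that the accessibility relation inherited from $\mc{M}$ is reflexive and transitive (it is, being the preorder of a persistent Kripke model) and that the domain function still expands (it does, by persistence) — together with verifying that setting $\mc{N}_P$ equal to the atomic $\mrm{G}$-valuation really validates the boxed-atom hypothesis of Lemma \ref{atomic sentences}, which, as noted, is immediate from heredity.
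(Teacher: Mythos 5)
Your proposal is correct and follows essentially the same route as the paper: both construct the S4 model $\mc{N}$ on the same frame by copying the atomic $\mrm{G}$-valuation into $\mc{N}_P$, check that $g(P)=\Box P$ then agrees with $P$ via heredity of the atomic valuation, and invoke Lemma \ref{atomic sentences} to transfer the failure of $\vphi$ at $w_0$ to a failure of $g(\vphi)$. Your write-up is in fact slightly more explicit than the paper's about why the boxed-atom hypothesis holds, but there is no substantive difference.
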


\begin{proof}
Let $\mc{M}=\langle W_\mc{M}, \leq_\mc{M}, \mc{D}_\mc{M}, \mc{I}_\mc{M}\rangle$ be the persistent first-order Kripke countermodel for $\vphi$, with $\mc{I}_\mc{M}$ consisting of the functions $\mc{M}_C, \mc{M}_F, \mc{M}_P$. Then there is some $w\in W_\mc{M}$ s.t. $w\nVdash_{\mrm{G},\mc{M}}\vphi$. Build now an S4 model $\mc{N}$ by letting $W_\mc{N}=W_\mc{M}, \leq_\mc{N}=\leq_\mc{M}, D_\mc{N}=D_\mc{M}, \mc{N}_C=\mc{M}_C, \mc{N}_F=\mc{M}_F$, and letting $v\Vdash_{\mrm{S4},\mc{N}} P(t_1,...,t_n)$ iff $v\Vdash_{\mrm{G},\mc{M}} P(t_1,...,t_n)$, for $P(t_1,...,t_n)$ an atomic formula and $v$ a world in $W_\mc{M}=W_\mc{N}$. Then, for any such atomic $P(t_1,...,t_n)$ and any world $v\in W$, it easily follows by the definition of $g$:

\begin{align*}
    v\Vdash_{\mrm{S4},\mc{N}} g(P(t_1,...,t_n))
    & \Leftrightarrow v\Vdash_{\mrm{G},\mc{M}} P(t_1,...,t_n)
\end{align*}

\noindent Hence, we apply Lemma \ref{atomic sentences} to conclude, from $w\nVdash_{\mrm{G},\mc{M}}\vphi$, that $w\nVdash_{\mrm{S4},\mc{N}}g(\vphi)$, and thus $\mc{N}\nvDash_{\mrm{S4}} g(\vphi)$.

\end{proof}

\begin{lemma}
    If there is an $\mrm{S4}$ model $\mc{M}$ such that $\mc{M}\nvDash_\mrm{S4} g(\vphi)$ for $\vphi$ a formula  of $\lipc$, then there is a persistent first-order Kripke countermodel $\mc{M}'$ such that $\mc{M}'\nvDash_\mrm{G} \vphi$. 
\end{lemma}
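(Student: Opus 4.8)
The plan is to mirror the construction used in the preceding lemma, but with one crucial twist that makes the atomic clause persistent. We are handed an $\mrm{S4}$ model $\mc{M}=\langle W_\mc{M}, \leq_\mc{M}, \mc{D}_\mc{M}, \mc{I}_\mc{M}\rangle$ together with a world $w_0\in W_\mc{M}$ such that $w_0\nVdash_{\mrm{S4},\mc{M}} g(\vphi)$, and we must manufacture a persistent first-order Kripke model $\mc{M}'$ in which $\vphi$ fails. First I would keep the frame, the domains, and the interpretations of constants and functions fixed: set $W_{\mc{M}'}=W_\mc{M}$, $\leq_{\mc{M}'}=\leq_\mc{M}$ (which is already reflexive and transitive, as required), $\mc{D}_{\mc{M}'}=\mc{D}_\mc{M}$, $\mc{M}'_C=\mc{M}_C$ and $\mc{M}'_F=\mc{M}_F$; since the $\mrm{S4}$ model is assumed to have expanding domains with the standard, persistent interpretation of constants and functions, these data already satisfy the corresponding clauses of a persistent first-order Kripke model.

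The one genuinely new point concerns the atomic predicates. Here I would \emph{not} copy the $\mrm{S4}$ interpretation of $P$ directly---an arbitrary $\mrm{S4}$ model need not interpret $P$ persistently---but instead declare $\langle P_i, w, \langle t_1,\dots,t_n\rangle\rangle\in \mc{M}'_P$ to hold exactly when $w\Vdash_{\mrm{S4},\mc{M}}\Box P_i(t_1,\dots,t_n)$, i.e. when $w'\Vdash_{\mrm{S4},\mc{M}} P_i(t_1,\dots,t_n)$ for every $w'\geq_\mc{M} w$. The crucial verification is that this makes $\mc{M}'_P$ satisfy the hereditary requirement (ii) on the predicate interpretation: this is immediate, because the truth set of a boxed formula is upward closed under a transitive accessibility relation, so $w\Vdash_{\mrm{S4},\mc{M}}\Box P_i(t_1,\dots,t_n)$ and $w\leq_\mc{M} w'$ together give $w'\Vdash_{\mrm{S4},\mc{M}}\Box P_i(t_1,\dots,t_n)$. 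The presence-of-witnesses condition (i) is inherited from the expanding-domain structure of $\mc{M}$. Thus $\mc{M}'$ is a bona fide persistent first-order Kripke model.

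With $\mc{M}'$ in hand, I would observe that, by the very definition of the atomic clause together with the translation $g$---recall $g(P_i(t_1,\dots,t_n))=\Box P_i(t_1,\dots,t_n)$---we have, for every atomic $P_i(t_1,\dots,t_n)$ and every world $w$, the equivalence $w\Vdash_{\mrm{G},\mc{M}'} P_i(t_1,\dots,t_n)\Lra w\Vdash_{\mrm{S4},\mc{M}} g(P_i(t_1,\dots,t_n))$. This is precisely the hypothesis required to apply Lemma \ref{atomic sentences 2} (with the $\lbox$ model being $\mc{M}$ and the constructed $\lipc$ model being $\mc{M}'$), which upgrades the atomic equivalence to the full one $w\Vdash_{\mrm{G},\mc{M}'}\psi\Lra w\Vdash_{\mrm{S4},\mc{M}} g(\psi)$ for every formula $\psi$ of $\lipc$, in particular for $\vphi$. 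Instantiating at $w_0$ and using $w_0\nVdash_{\mrm{S4},\mc{M}} g(\vphi)$ yields $w_0\nVdash_{\mrm{G},\mc{M}'}\vphi$, whence $\mc{M}'\nvDash_\mrm{G}\vphi$, as desired.

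The main obstacle---and the reason this direction is not perfectly symmetric to the previous lemma---is exactly the persistence of the atomic clause. In the forward lemma the source model was already a persistent Kripke structure, so one could transport atomic truth verbatim; here the source is an arbitrary $\mrm{S4}$ model, and a naive copy of the $P$-interpretation could violate heredity. Replacing $P$ by $\Box P$ in the atomic clause is the device that simultaneously guarantees persistence and makes the atomic equivalence with $g$ hold on the nose, so that Lemma \ref{atomic sentences 2} applies without further ado.
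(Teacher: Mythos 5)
Your proposal is correct and follows essentially the same route as the paper: fix the frame, domains, constants and functions, redefine the atomic interpretation via the box so that heredity holds, and then invoke Lemma \ref{atomic sentences 2} to transfer the countermodel. The only cosmetic difference is that the paper stipulates $v\Vdash_{\mrm{G}} P(\vec t)$ iff $v\Vdash_{\mrm{S4}} \Box g(P(\vec t))$ (i.e.\ $\Box\Box P$), whereas you use $\Box P$ directly; these coincide in $\mrm{S4}$, so the two constructions are the same.
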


\begin{proof}
Let $\mc{N}:=\langle W_\mc{N}, \leq_\mc{N}, D_\mc{N}, \mc{I}_\mc{N}\rangle$ be the S4 countermodel for $\vphi$, with $\mc{I}_\mc{N}$ consisting of the functions $\mc{N}_C, \mc{N}_F, \mc{N}_P$. Then there is $w\in W_\mc{N}$ such that $w\nVdash_{\mrm{S4}, \mc{N}} g(\vphi)$. Now, define the intuitionistic model $\mc{M}=\langle W_\mc{M}, \leq_\mc{M}, D_\mc{M}, \mc{I}_\mc{M}\rangle$ by $W_\mc{M}=W_\mc{N}, \leq_\mc{M}=\leq_\mc{N}, D_\mc{M}=D_\mc{N}, \mc{M}_C=\mc{N}_C, \mc{M}_F=\mc{N}_F$; and with $\mc{M}_P$ such that, for any atomic formula $P(t_1,...,t_n)$, $v\Vdash_{\mrm{G},\mc{M}} P(t_1,...,t_n)$ iff $v\Vdash_{\mrm{S4},\mc{N}} \Box g(P(t_1,...,t_n))$ for any world $v$ in $W_\mc{N}=W_\mc{M}$.

This modification allows us to claim that $\mc{M}$ meets the hereditariness conditions, insofar it is easy to see that $w\Vdash_{\mrm{G},\mc{M}}P(t_1,...,t_n)$ implies $w'\Vdash_{\mrm{G},\mc{M}}P(t_1,...,t_n)$ for all $w'\geq w$. So $\mc{M}$ is indeed a persistent first-order Kripke model. Now, we just apply Lemma \ref{atomic sentences 2} to note that $w\nVdash_{\mrm{G}, \mc{M}} \vphi$
\end{proof}

\noindent From the previous two lemmata, we obtain: 
\begin{lemma}\label{s4 and g}
Let $\vphi$ be a formula of $\lipc$. Then:

\begin{equation*}
    \vDash_\mrm{S4} g(\vphi) \Leftrightarrow \,\vDash_\mrm{G} \vphi
\end{equation*}
\end{lemma}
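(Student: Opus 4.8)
The plan is to derive the biconditional directly from the two preceding lemmata by contraposition, so that no fresh model construction is required. First I would unpack the two notions of validity at play. By definition, $\vDash_\mrm{G}\vphi$ asserts that $\vphi$ is G-forced at every world of every persistent first-order Kripke model; hence its failure, $\nvDash_\mrm{G}\vphi$, is precisely the existence of some persistent first-order Kripke model $\mc{M}$ with $\mc{M}\nvDash_\mrm{G}\vphi$. Symmetrically, $\nvDash_\mrm{S4} g(\vphi)$ is exactly the existence of an S4 model $\mc{M}$ with $\mc{M}\nvDash_\mrm{S4} g(\vphi)$. Making this correspondence between "non-validity" and "existence of a countermodel" explicit is what lets the two lemmata slot together.

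For the direction $\vDash_\mrm{S4} g(\vphi)\Rightarrow\,\vDash_\mrm{G}\vphi$, I would argue contrapositively: assuming $\nvDash_\mrm{G}\vphi$, I obtain a persistent first-order Kripke countermodel for $\vphi$, and feed it into the first of the two model-transformation lemmata, which returns an S4 model falsifying $g(\vphi)$, i.e.\ $\nvDash_\mrm{S4} g(\vphi)$. For the converse $\vDash_\mrm{G}\vphi\Rightarrow\,\vDash_\mrm{S4} g(\vphi)$, I again contrapose: from $\nvDash_\mrm{S4} g(\vphi)$ I extract an S4 countermodel and apply the second lemma to produce a persistent first-order Kripke model witnessing $\nvDash_\mrm{G}\vphi$. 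Conjoining the two implications yields the desired equivalence.

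There is essentially no obstacle at this stage, since the substantive work was already discharged in the two countermodel-transfer lemmata, which rested in turn on Lemmata \ref{atomic sentences} and \ref{atomic sentences 2} (the agreement of G-forcing and S4-forcing under the translation $g$, once agreement is assumed on atoms). The only point requiring minimal care is the bookkeeping of the quantifier alternation — matching "for every model, for every world" against "there exists a model, there exists a world" — when moving between a validity claim and the existence of a countermodel; once this is made explicit, the remaining reasoning is purely propositional.
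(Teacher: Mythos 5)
Your proposal is correct and matches the paper exactly: the paper also obtains Lemma \ref{s4 and g} immediately from the two preceding countermodel-transfer lemmata, with each direction established by contraposition (non-validity being the existence of a model with a falsifying world). No further comment is needed.
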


\noindent Also, the following result can be found, e.g., in \cite[Th.9.2.8]{schwichtenberg_troelstra_1996}:
\begin{lemma}[Schwichtenberg and Troelstra]\label{sch and troel}
\begin{equation*}
    \vdash_\mrm{S4} g(\vphi) \; \Leftrightarrow \; \vdash_\mrm{IPC} \vphi
\end{equation*}
\end{lemma}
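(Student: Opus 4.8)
The plan is to prove both directions of $\vdash_\mrm{S4} g(\vphi) \Lra \vdash_\mrm{IPC} \vphi$ separately: the right-to-left direction (soundness of the Gödel translation) by induction on intuitionistic derivations, and the left-to-right direction (faithfulness) semantically. The setting is natural deduction for both $\mrm{IPC}$ and $\mrm{S4}$, as already fixed in the excerpt.

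For the right-to-left direction I would first isolate the engine of the whole argument, namely that every translated formula is provably \emph{box-stable} in S4: $\vdash_\mrm{S4} g(\vphi)\lra \Box g(\vphi)$. This is a routine induction on $\vphi$. The atomic case $g(P(\ldots))=\Box P(\ldots)$ uses axiom $4$; the $\wedge,\ra,\forall$ cases are immediate, since their translations are already boxed or conjunctions of box-stable formulas; the $\vee$ case uses that each disjunct $g(\psi)$ already satisfies $g(\psi)\ra\Box g(\psi)$; and the $\exists$ case uses the S4-validity of $\exists v\,\Box g(\psi)\ra\Box\exists v\,g(\psi)$ over expanding domains. Armed with box-stability, I would induct on a natural-deduction derivation of $\vphi$ in $\mrm{IPC}$, checking that $g$ sends each rule to a derivable S4 pattern. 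Modus ponens goes through by the reflexivity axiom $\mrm{T}$: from $\vdash_\mrm{S4} g(\psi)$ and $\vdash_\mrm{S4} g(\psi\ra\chi)=\Box(g(\psi)\ra g(\chi))$ one recovers $g(\psi)\ra g(\chi)$ and hence $g(\chi)$. The implication-introduction, $\forall$-introduction and $\vee/\exists$-elimination rules are precisely where box-stability is needed: to discharge an assumption and reinsert the outer $\Box$ that $g$ demands (e.g. to obtain $\Box(g\psi\ra g\chi)$ by necessitation), one uses that the undischarged hypotheses, being translations, are S4-equivalent to boxed formulas.

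For the harder left-to-right direction I would argue semantically, invoking the completeness of $\mrm{IPC}$ with respect to standard intuitionistic Kripke models together with the soundness of $\mrm{S4}$ with respect to reflexive-transitive, expanding-domain Kripke models. The crux is a correspondence lemma in the spirit of Lemma \ref{atomic sentences}: given any intuitionistic Kripke model $\mc{M}$, read its frame as an S4 frame and keep the same atomic valuation, obtaining an S4 model $\mc{N}$; then for every formula $\psi$ and world $w$, $\;w\Vdash_\mrm{int}\psi \Lra w\Vdash_\mrm{S4} g(\psi)$. This is shown by induction on $\psi$. The atomic case turns on persistence of intuitionistic atomic forcing, so that $w\Vdash_\mrm{int}P$ is equivalent to ``$P$ holds at every $w'\geq w$'', i.e. to $w\Vdash_\mrm{S4}\Box P=g(P)$; the $\ra$ and $\forall$ cases match the boxed, upward-looking clauses; and the $\vee$ and $\exists$ cases match because standard intuitionistic forcing reads these connectives \emph{locally}, exactly as the unboxed clauses of $g$ do in S4. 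Contrapositively, if $\nvdash_\mrm{IPC}\vphi$, completeness yields $\mc{M}$ and $w$ with $w\nVdash_\mrm{int}\vphi$, whence $w\nVdash_\mrm{S4} g(\vphi)$ in $\mc{N}$, so $\nvDash_\mrm{S4} g(\vphi)$ and therefore $\nvdash_\mrm{S4} g(\vphi)$ by S4 soundness.

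The main obstacle is this faithfulness direction, and within it the correspondence lemma: one must check that the \emph{local} intuitionistic clauses for $\vee$ and $\exists$ align with the \emph{unboxed} translation clauses under S4 semantics, while the boxed atomic, implication and universal clauses align with the hereditary intuitionistic clauses. It is worth stressing that this is with respect to \emph{standard} intuitionistic forcing, where $\vee$ and $\exists$ are read locally; it is therefore a genuinely different correspondence from the paper's Lemma \ref{s4 and g}, which pairs S4 with the \emph{global} $\mrm{G}$-forcing, and the latter cannot be substituted here on pain of circularity. An alternative, purely proof-theoretic route—converting a cut-free S4 derivation of $g(\vphi)$ directly into an $\mrm{IPC}$ derivation of $\vphi$—would dispense with the completeness appeals but demands substantial bookkeeping over the sequent calculus, which is why I would favour the semantic argument.
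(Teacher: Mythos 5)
Your proposal is correct, but it is worth being clear about what the paper actually does here: it offers no proof of this lemma at all, simply importing it from Schwichtenberg and Troelstra (Theorem 9.2.8 of their book). What you have written out is, in essence, the standard textbook argument that sits behind that citation, and both halves check out. For the soundness direction, the box-stability lemma $\vdash_\mrm{S4} g(\vphi)\lra\Box g(\vphi)$ is indeed the engine: the only case that deserves a second look is $\exists$, where you correctly reduce to $\exists v\,\Box A\ra\Box\exists v\,A$, which is both valid over expanding domains and derivable in QS4 (from $A\ra\exists v\,A$ by necessitation, K, and $\exists$-elimination), and the role you assign to box-stability in $\ra$- and $\forall$-introduction (restoring the outer box under open hypotheses via boxed necessitation) is exactly right, though note that plain $\vee$-elimination goes through without it. For the faithfulness direction, your correspondence lemma between \emph{standard} intuitionistic forcing and S4 forcing of $g$-translations on the same frame is sound in every clause, with the atomic case resting on persistence of the intuitionistic valuation. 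Your observation that this correspondence is genuinely distinct from the paper's Lemma \ref{s4 and g}---which pairs S4 with the nonstandard global forcing and is proved \emph{using} the present lemma, so cannot be invoked here without circularity---is well taken and is the one place where a careless reader of this paper could go wrong. The trade-off is the expected one: the paper buys brevity by outsourcing the result, while your sketch makes the appendix self-contained at the cost of also needing IPC-completeness for standard Kripke models and S4-soundness as black boxes (or, as you note, a heavier proof-theoretic alternative).
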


\begin{proof}[Proof of Theorem \ref{completeness of G}]
We reason as follows:

\begin{align*}
    \vdash_\mrm{IPC} \vphi & \text{ iff } \vdash_\mrm{S4} g(\vphi) & \text{ by Lemma \ref{sch and troel}}\\
    & \text{ iff } \vDash_\mrm{S4} g(\vphi) & \text{ by strong S4-completeness} \\
    & \text{ iff } \vDash_\mrm{G} \vphi & \text{ by Lemma \ref{s4 and g}}\\
\end{align*}
\end{proof}

\noindent Finally: 

\begin{cortwo}
Let $\vphi$ be a formula of some first-order language $\mc{L}\subseteq \mc{L}_\mrm{IPC}$, and let $\mc
{M}$ range over first-order Kripke models. Then: 

\begin{equation*}
   \Gamma  \vdash_{\mrm{IPC}} \vphi \Ra \, \forall \mc{M}(\mc{M}\vDash_\mrm{G} \Gamma \Ra \mc{M}\vDash_\mrm{G} \vphi)
\end{equation*}
\end{cortwo}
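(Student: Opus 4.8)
The plan is to reduce the strong-soundness statement to the weak completeness/soundness result already proved (Theorem \ref{completeness of G}) by internalising the finitely many active hypotheses as antecedents of a single implication. First I would use the finiteness of derivations: since $\Gamma\vdash_{\mrm{IPC}}\vphi$, only finitely many assumptions are actually discharged, so there is a finite set $\{\gamma_1,\dots,\gamma_n\}\subseteq\Gamma$ with $\gamma_1,\dots,\gamma_n\vdash_{\mrm{IPC}}\vphi$. Repeatedly applying $\ra$-introduction (the deduction rule, which is primitive in the natural-deduction presentations the paper relies on) discharges all of these assumptions and yields a closed derivation $\vdash_{\mrm{IPC}}\gamma_1\ra(\gamma_2\ra\cdots\ra(\gamma_n\ra\vphi))$.

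Next I would apply the soundness direction of Theorem \ref{completeness of G} to this theorem, obtaining $\vDash_\mrm{G}\gamma_1\ra(\cdots\ra(\gamma_n\ra\vphi))$; in particular, the given model $\mc{M}$ G-forces this implication at each of its worlds. Note that only the direction $\vdash_{\mrm{IPC}}\Ra\vDash_\mrm{G}$ is used here, which is exactly the chain $\vdash_{\mrm{IPC}}\Ra\vdash_{\mrm{S4}}\Ra\vDash_{\mrm{S4}}\Ra\vDash_\mrm{G}$ assembled in the proof of that theorem, so there is no circularity.

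Finally, I would fix an arbitrary $w\in W_\mc{M}$ and show $w\Vdash_\mrm{G}\vphi$. The hypothesis $\mc{M}\vDash_\mrm{G}\Gamma$ gives $w\Vdash_\mrm{G}\gamma_i$ for every $i$. Because the G-clause for the conditional yields that $w\Vdash_\mrm{G}\alpha\ra\beta$ together with $w\Vdash_\mrm{G}\alpha$ implies $w\Vdash_\mrm{G}\beta$ (instantiate the universal over accessible worlds at $w$ itself, using reflexivity of $\leq_\mc{M}$), I would peel off the antecedents one at a time: from $w\Vdash_\mrm{G}\gamma_1\ra(\cdots)$ and $w\Vdash_\mrm{G}\gamma_1$ obtain $w\Vdash_\mrm{G}\gamma_2\ra(\cdots)$, and so on, reaching $w\Vdash_\mrm{G}\vphi$ after $n$ iterations of modus ponens within the single world $w$. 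Since $w$ was arbitrary, $\mc{M}\vDash_\mrm{G}\vphi$, as required.

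The step I expect to require the most care is the internalisation of the hypotheses. One must check that the deduction rule is genuinely available with no eigenvariable obstruction — it is, since discharging an open assumption $\gamma_i$ by $\ra$-introduction carries no variable condition even when $\gamma_i$ has free variables — and that Theorem \ref{completeness of G}, stated for $\lipc$, applies to the implication built from $\mc{L}$-formulae, which it does because $\mc{L}\subseteq\lipc$ and the implication uses only symbols already occurring in $\gamma_1,\dots,\gamma_n,\vphi$. Everything past that point is routine.
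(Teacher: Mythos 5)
Your proposal is correct and follows essentially the same route as the paper: invoke finiteness of derivations and the deduction theorem to internalise the hypotheses into a single IPC theorem, apply the soundness direction of Theorem \ref{completeness of G}, and then discharge the antecedents at each world via the G-clause for $\ra$ and reflexivity of $\leq_\mc{M}$. The only cosmetic difference is that you use a nested implication $\gamma_1\ra(\cdots\ra(\gamma_n\ra\vphi))$ where the paper uses $\gamma_1\wedge\cdots\wedge\gamma_n\ra\vphi$, which changes nothing of substance.
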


\begin{proof}
If $\Gamma=\varnothing$, that is simply the left-to-right direction of Theorem \ref{completeness of G}. If $\Gamma\neq \varnothing$, since IPC possesses the deduction theorem, it follows that there are $\gamma_1,...,\gamma_n\in \Gamma$ such that $\vdash_\mrm{IPC}\gamma_1\wedge...\wedge\gamma_n\ra \vphi$. Applying again the left-to-right direction of Theorem \ref{completeness of G}, every Kripke model satisfies $\gamma_1\wedge...\wedge\gamma_n\ra \vphi$. So, if $\mc{M}\vDash_\mrm{G}\Gamma$, by the G-forcing clauses it follows that $\mc{M}\vDash_\mrm{G} \vphi$.
\end{proof}

\end{appendices}
\end{document}